\newcolumntype{C}[1]{>{\centering\arraybackslash}m{#1}}
\mathchardef\ordinarycolon\mathcode`\:
\newcommand*{\Cdot}[1][2.4]{%
  \mathpalette{\CdotAux{#1}}\cdot%
}
\newdimen\CdotAxis
\newcommand*{\CdotAux}[3]{%
  {%
    \settoheight\CdotAxis{$#2\vcenter{}$}%
    \sbox0{%
      \raisebox\CdotAxis{%
        \scalebox{#1}{%
          \raisebox{-\CdotAxis}{%
            $\mathsurround=0pt #2#3$%
          }%
        }%
      }%
    }%
    \dp0=0pt %
    \sbox2{$#2\bullet$}%
    \ifdim\ht2<\ht0 %
      \ht0=\ht2 %
    \fi
    \sbox2{$\mathsurround=0pt #2#3$}%
    \hbox to \wd2{\hss\usebox{0}\hss}%
  }%
}
\DeclarePairedDelimiter\abs{\lvert}{\rvert}%
\DeclarePairedDelimiter\norm{\lVert}{\rVert}%
\let\oldabs\abs
\def\abs{\@ifstar{\oldabs}{\oldabs*}}
\let\oldnorm\norm
\def\norm{\@ifstar{\oldnorm}{\oldnorm*}}
\definecolor{dunkelgruen}{rgb}{0,0.4,0}
\DeclareMathOperator{\diver}{div}
\DeclareMathOperator{\supp}{supp}
\def\R{\mathbb{R}}
\def\N{\mathbb{N}}
\def\C{\mathbb{C}}
\def\E{\mathbb{E}}
\def\Cov{\mathbb{C}\mathrm{ov}}
\def\P{\mathbb{P}}
\def\Id{{\rm Id}}
\theoremstyle{plain}
\newtheorem{theorem}{Theorem}
\newtheorem{definition}[theorem]{Definition}
\newtheorem{proposition}[theorem]{Proposition}
\newtheorem{corollary}[theorem]{Corollary}
\newtheorem{notation}[theorem]{Notation}
\newtheorem{example}[theorem]{Example}
\newtheorem{axiom}[theorem]{Axiom}
\newtheorem{remark}[theorem]{Remark}
\newtheorem*{acknowledgement}{Acknowledgements}
\def\mtm{\mathtt{m}}
\def\F{\mathcal{F}}
\def\gldr{\mathrm{GL}(d,\R)}
\def\tr{{\rm tr}}
\DeclareMathOperator{\stern}{\ast}
\begin{document}
\title{Adaptive Convolutions}
\author{Ilja Klebanov\footnote{Zuse Institute Berlin (ZIB), Takustra\ss e 7, 14195 Berlin, Germany (klebanov@zib.de).}}
\date{\today}
\maketitle
\begin{abstract}
When smoothing a function $f$ via convolution with some kernel, it is often
desirable to adapt the amount of smoothing locally to the variation of $f$. For
this purpose, the constant smoothing coefficient of regular convolutions needs
to be replaced by an adaptation function $\mu$. This function is
matrix-valued which allows for different degrees of smoothing in different directions.
The aim of this paper is twofold. The first is to provide a theoretical
framework for such adaptive convolutions. The second purpose is to derive a
formula for the automatic choice of the adaptation function $\mu = \mu_f$ in
dependence of the function $f$ to be smoothed.
This requires the notion of the \emph{local variation} of $f$, the
quantification of which relies on certain phase space transformations of $f$.
The derivation is guided by meaningful axioms which, among other things, guarantee invariance of adaptive convolutions under shifting and scaling of $f$.
%
%
%
%
%
%

\end{abstract}

\textbf{Keywords:} adaptive kernel smoothing, convolution, Young's inequality, continuity equation, phase space transformation, windowed Fourier transform, Wigner transform, local variation, axiomatic approach, invariance

\textbf{2010 MSC:} 44A35, 65D10, 42B10


\section{Motivation}
\label{section:convolutionMotivation}
The convolution of two integrable functions $f,g\colon\R^d\to\R$,
\[
(f\ast g)(x) = \int_{\R^d}f(y)\, g(x-y)\, \mathrm dy,
\]
is a basic mathematical tool with applications in probability theory, image
processing, optics, acoustics and many other areas.
If $g$ is a probability density, say a Gaussian density
\begin{equation}
\label{equ:gaussian}
g(x) = (2\pi\sigma^2)^{-d/2}\, \exp\left(-\frac{\|x\|^2}{2\sigma^2}\right),
\end{equation}
the convolution $(f\ast g)(x)$ evaluated at a point $x$ can be viewed as the
average over all values $f(y),\ y\in\R^d,$ weighted by $g(x-y)$, i.e. the contribution of $f(y)$ to $(f\ast g)(x)$ decreases as the distance between $x$ and $y$ increases. Convolutions are
therefore often used to `flatten' or `smooth' a
function $f$ and $g$ is called a \emph{smoothing kernel} in this case.

A natural question arising here is how to choose the standard deviation $\sigma$ of
$g$, i.e. how strongly we want to smooth the function $f$. Roughly speaking, the
aim is usually to flatten out the bumps and edges without losing the shape of
the function entirely.

\begin{example}
\label{example:twoFunctions}
Assume that we decided that some $\sigma>0$ is adequate to smooth a function
$f_1\colon\R^d\to\R$, and $f_2(x) = f_1(\alpha x)$ is  a scaled version of $f_1$ by some factor $\alpha>0$.
In order to achieve a similar smoothing effect for $f_2$, 
the density $g$ has to be scaled accordingly, $\tilde g =
\alpha^d g(\alpha x)$ (the prefactor $\alpha^d$ is just a normalization
factor), as visualized in Figure \ref{fig:chooseKernel}:
\[
(f_2 \ast \tilde g)(x)
=
\alpha^d \int_{\R^d} f_1(\alpha y)\, g(\alpha (x-y))\, \mathrm dy 
=
\int_{\R^d} f_1(y)\, g(\alpha x-y)\, \mathrm dy 
=
(f_1\ast g)(\alpha x).
\]
\begin{figure}[H]
        \centering
        \begin{subfigure}[b]{0.48\textwidth}
                \centering
                \includegraphics[width=1\textwidth]{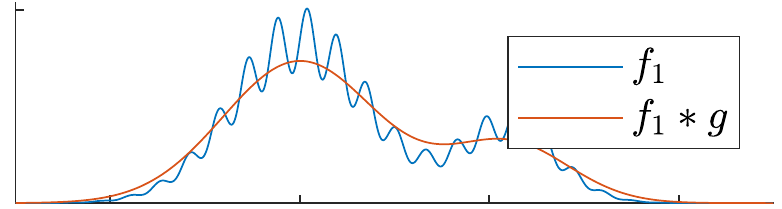}
                \caption{The standard deviation of the Gaussian $g$ from
                \eqref{equ:gaussian} chosen appropriately to smooth the function
                $f_1$, here $\sigma = 0.4$. }
                \end{subfigure}%
        \hfill
        \begin{subfigure}[b]{0.48\textwidth}
                \centering
                \includegraphics[width=1\textwidth]{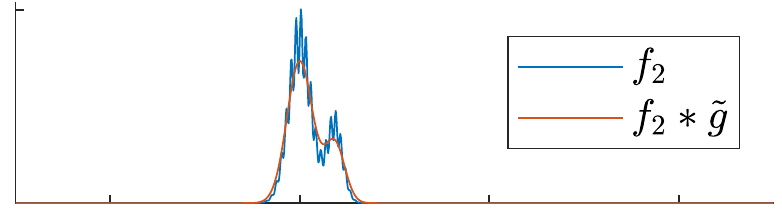}
                \caption{In order to smooth $f_2(x) = f_1(\alpha x)$, the
                function $g$ has to be scaled by the same factor to get an
                analogous result (here $\alpha = 6$).}
        \end{subfigure}        
        \label{convolution1und2}
        \caption{Choosing proper standard deviations of the density $g$ to
        smooth differently scaled versions of the function $f_1$.}
        \label{fig:chooseKernel}
\end{figure}
\end{example}
In other words, once the `degree' or `extent' of smoothing is agreed upon, the
width of the smoothing kernel $g$ has to be adapted to the `variation' of the function $f$.
However, if the variation of the function changes considerably in space, no
single suitable width $\sigma$ can be found and one is forced to adapt it \emph{locally},
\begin{equation}
\label{equ:firstDef}
(f\ast_{\mu} g) (x):= \int f(y)\, \abs{\det\mu(y)}\, g\big(\mu(y)(x-y)\big)\,
\mathrm dy\, ,
\end{equation}
where $\mu:\R^d\to \mathrm{GL} (d,\R)$ is a measurable (matrix-valued) function, which scales the density $g$ locally by different factors $\mu(y)$.
\begin{example}
\label{example:differingVariation}
Taking up our functions $f_1$ and $f_2$ from Example \ref{example:twoFunctions}, we build up a new function
\[
f(x) = f_1(x) + f_2(x-a),
\]
separating $f_1$ and $f_2$ in space by a shift $a\in\R^d$.
Choosing $g$ as a smoothing kernel would be inappropriate for one `part' of
the function (oversmoothing), as choosing $\tilde g$ would be for the other
(undersmoothing), see Figure \ref{fig:convolution3und4und5} (a) and (b).
For the shift $a = 8$ and scaling factor $\alpha = 6$, the adaptive convolution $f\ast_{\mu} g$ with
\begin{equation}
\label{equ:choiceMu}
\mu(y) = \begin{cases}
1 & \text{if } x<7,\\
6 & \text{if } x\ge 7,
\end{cases}
\end{equation}
provides a suitable solution for both parts (see Figure \ref{fig:smoothingh3}).
\begin{figure}[H]
        \centering
        \begin{subfigure}[b]{\textwidth}
                \centering
                \includegraphics[width=1\textwidth]{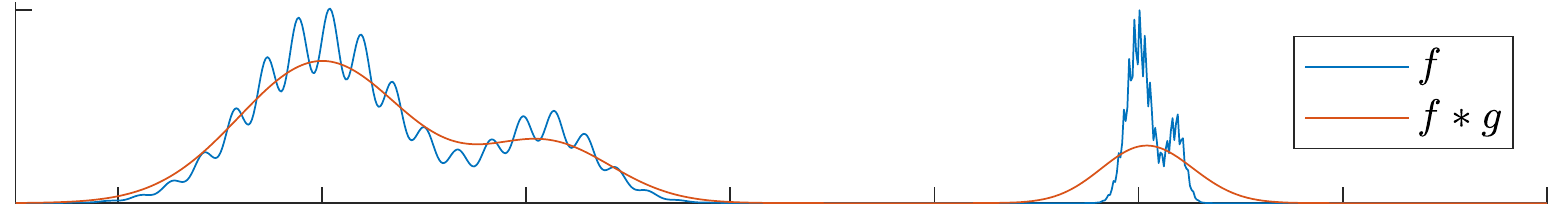}
                \caption{$g$ is an appropriate smoothing kernel for the left
                part, but not for the right one.}
                \label{fig:smoothingh1}
                \end{subfigure}%
        \vspace{0.2cm}
        \begin{subfigure}[b]{\textwidth}
                \centering
                \includegraphics[width=1\textwidth]{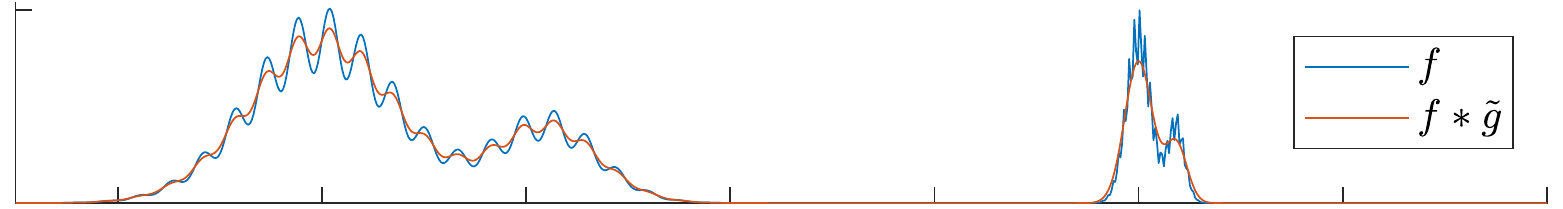}
                \caption{$\tilde g$ is an appropriate smoothing kernel for the
                right part, but not for the left one.}
                \label{fig:smoothingh2}
        \end{subfigure}        
        \vspace{0.2cm}
        \begin{subfigure}[b]{\textwidth}
                \centering
                \includegraphics[width=1\textwidth]{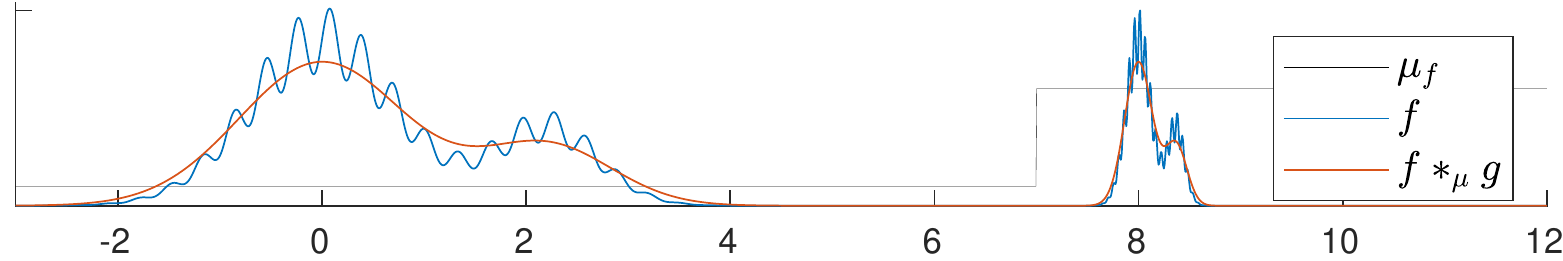}
                \caption{Adaptive convolutions guarantee an appropriate width
                of the smoothing kernel everywhere. $\mu$ is chosen as in equation \eqref{equ:choiceMu}.}
        \label{fig:smoothingh3}
        \end{subfigure}
        \caption{Comparison of common convolutions with adaptive convolutions. Here, the shift is $a=8$ and the scaling factor is $\alpha = 6$.}
        \label{fig:convolution3und4und5}
\end{figure}
\end{example}
While in this example the adaptation function $\mu$ was chosen \emph{manually},
the question arises on how this choice can be automatized -- what is
a good adaptation function $\mu = \mu_f$ in dependence of $f$? We wish to
address this issue by first imposing proper axioms concerning the behavior
of $\mu_f$ under shifting and scaling of $f$, which guarantee invariance of the
adaptive convolution under such transformations. In order to derive a formula
for $\mu_f$ that fulfills these axioms, we will introduce a measure for the
local variation of $f$ which relies on certain phase space transformations of
$f$.

One possible application of the adaptive convolution framework is variable
kernel density estimation, which is discussed in a companion paper \cite{klebanovVKDE}.


This paper is structured as follows.
In Section \ref{section:Theory}, the framework and theory of adaptive convolutions is presented in a slightly more general setup. Two versions of Young's inequality as well as a differentiation rule and a continuity equation for adaptive convolutions are discussed.
In Section \ref{section:ConvolutionAdaptationFunction}, we address the second
main issue of this paper -- the \emph{automatic} choice of the adaptation function $\mu$.
In Appendix \ref{section:Convolution2}, further approaches to adaptive smoothing
are discussed. The proofs are provided in Appendix \ref{section:proofs}.

\section{Theoretical Properties of Adaptive Convolutions}
\label{section:Theory}

\begin{definition}[adaptive convolutions, adaptation function]
\label{def:adaptiveConvolution}
We define the \emph{generalized convolution} $f\bar\ast G$ of two
measurable functions $f\colon \R^d\to\R$ and $G \colon\R^d\times\R^d\to\R$ as
the integral operator with kernel $G$:
\[
(f\bar\ast G)(x) := \int f(y)\, G(x,y)\, dy,
\]
whenever the integral is well-defined. Let $f\in L^1(\R^d),\
g\in L^p(\R^d),\ 1\le p\le\infty$, $\mu\colon \R^d\to \mathrm{GL}(d,\R)$ be a measurable function and
\[
g_{\mu,p}(x,y):= \abs{\det \mu(y)}^{1/p}\,
g\big(\mu(y)(x-y)\big),
\]
where $1/p := 0$ for $p=\infty$. We define the \emph{$\mu$-adaptive convolution}
of $f$ with $g$ by
\[
f\ast_{\mu}^p g := f\bar\ast g_{\mu,p}\, .
\]
$\mu$ will be called \emph{adaptation function}.
In the case $p=1$, the definition coincides with \eqref{equ:firstDef} and we
will omit the index $p$: $f\ast_\mu g:= f\ast_\mu^1 g$ and $g_\mu :=
g_{\mu,1}$.
\end{definition}

\begin{remark}
\label{rem:obviousObservations}
\begin{compactenum}[(a)]
  \item We will allow $\mu(x)\notin\gldr$ and even attain infinite values in the
  nodes of $f$, i.e. for $x\notin\supp (f)$, since this does not influence the
  integral (using the convention $0\cdot\infty := 0$).
  \item The adaptive convolution is \emph{not} symmetric and the notation
  $f\ast_{\mu}g$ indicates that $g$ is scaled by $\mu(y)$, while $f\sideset{_\mu}{} \stern g$ can be used, if $f$ is to be scaled (we will not need the second notation).
  \item The adaptive convolution is \emph{not} associative.
  \item The adaptive convolution is linear in both arguments.
  \item The $\mu$-adaptive convolution reduces to the common convolution $f\ast
  g$ for $\mu \equiv \Id$.
  \item In Proposition \ref{prop:ConvDer}, we will slightly abuse the notation of adaptive convolutions
by applying it to matrix-valued functions $f\in L^1(\R^d,\R^{m\times n}),\
g\in L^p(\R^d,\R^{n\times \ell})$ or to the case where $f$ is an operator acting on $g$.
The definitions go analogously.  
\end{compactenum}
\end{remark}

\subsection{Young's Inequality for Adaptive Convolutions}
In the following, we will discuss a weak and a strong version of Young's
inequality \cite{young1912multiplication}, \cite[Theorem
3.9.4]{bogachev2007measure} and the conditions under which they hold. This will result in the generalization of
Young's inequality for convolutions to the case of adaptive convolutions.

\begin{theorem}
\label{theorem:young1}
Let $f\in L^1(\R^d)$, $1\le p\le \infty$ and $G \colon\R^d\times\R^d\to\R$ be
measurable such that $\left\| G(\Cdot,y) \right\|_p\le \Gamma$ for some $\Gamma \ge 0$ independent of
$y\in\R^d$. Then
\[
\|f\bar\ast G\|_{p}\le
\|f\|_{1}\, \Gamma\, .
\]
\end{theorem}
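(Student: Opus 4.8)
The plan is to recognize the claimed bound as an instance of Minkowski's integral inequality: writing $(f\bar\ast G)(x)=\int f(y)\,G(x,y)\,\mathrm dy$ as a continuous superposition of the functions $x\mapsto G(x,y)$ with weights $f(y)$, the $L^p$ triangle inequality in integral form should give $\|f\bar\ast G\|_p\le\int|f(y)|\,\|G(\Cdot,y)\|_p\,\mathrm dy$, after which the hypothesis $\|G(\Cdot,y)\|_p\le\Gamma$ and $f\in L^1(\R^d)$ finish the estimate by $\int|f(y)|\,\Gamma\,\mathrm dy=\Gamma\|f\|_1$. Rather than invoke Minkowski's inequality as a black box, I would prove exactly the version I need by duality, treating the endpoint $p=\infty$ separately.

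For $1\le p<\infty$ let $q$ denote the conjugate exponent, $\tfrac1p+\tfrac1q=1$. Since Lebesgue measure on $\R^d$ is $\sigma$-finite, I would compute the norm dually as $\|f\bar\ast G\|_p=\sup\{\,|\int(f\bar\ast G)(x)\,h(x)\,\mathrm dx| : \|h\|_q\le 1\,\}$. Fix such an $h$. The first step is to justify swapping the order of integration in $\int\!\int f(y)\,G(x,y)\,h(x)\,\mathrm dy\,\mathrm dx$; this I would do by Tonelli applied to the absolute values, using that $(x,y)\mapsto f(y)\,G(x,y)\,h(x)$ is jointly measurable and that Hölder together with the uniform bound gives $\int|G(x,y)\,h(x)|\,\mathrm dx\le\|G(\Cdot,y)\|_p\,\|h\|_q\le\Gamma$, so the double integral of the modulus is at most $\Gamma\|f\|_1<\infty$. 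Fubini then legitimizes the interchange, and after integrating in $x$ first and applying Hölder inside, I obtain $|\int(f\bar\ast G)\,h|\le\int|f(y)|\,\Gamma\,\mathrm dy=\Gamma\|f\|_1$. Taking the supremum over $h$ yields the claim.

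The case $p=\infty$ is more direct and needs no duality: for almost every $x$ one has $|(f\bar\ast G)(x)|\le\int|f(y)|\,|G(x,y)|\,\mathrm dy\le\int|f(y)|\,\|G(\Cdot,y)\|_\infty\,\mathrm dy\le\Gamma\|f\|_1$, whence $\|f\bar\ast G\|_\infty\le\Gamma\|f\|_1$. The only genuine obstacle throughout is the measure-theoretic bookkeeping — joint measurability of the integrand and the Tonelli/Fubini justification that makes the formal manipulations rigorous; once the finiteness bound $\Gamma\|f\|_1$ is in hand these steps are routine, and the inequality itself drops out immediately. I would expect the essential content to be precisely this reduction to Minkowski's integral inequality.
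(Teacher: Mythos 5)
Your proof is correct, but it follows a genuinely different route from the paper's. You prove the statement as an instance of Minkowski's integral inequality, $\|f\bar\ast G\|_p\le\int|f(y)|\,\|G(\Cdot,y)\|_p\,\mathrm dy$, established by duality: pair $f\bar\ast G$ against $h\in L^q$ with $\|h\|_q\le 1$, justify the interchange of integrals by Tonelli--Fubini (the bound $\Gamma\|f\|_1$ makes the double integral of the modulus finite), and apply H\"older inside the $x$-integral. The paper instead argues directly on the $p$-th power: it splits $|f|=|f|^{1/p'}\,|f|^{1/p}$, applies H\"older pointwise to get $|f\bar\ast G|(x)\le\big\||f|^{1/p'}\big\|_{p'}\,\big\||f|^{1/p}|G(x,\Cdot)|\big\|_p$, integrates the $p$-th power, swaps the order of integration by Tonelli, and invokes the hypothesis $\|G(\Cdot,y)\|_p\le\Gamma$ under the $y$-integral. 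Your approach buys a sharper intermediate statement --- the uniform bound $\Gamma$ could be relaxed to mere integrability of $y\mapsto|f(y)|\,\|G(\Cdot,y)\|_p$ --- and makes the structural reason for the inequality transparent (a continuous superposition plus the triangle inequality); the cost is reliance on the converse-H\"older/duality representation of the $L^p$ norm, hence on $\sigma$-finiteness, and one extra piece of bookkeeping you leave implicit: the duality argument presupposes that $f\bar\ast G$ is defined a.e.\ and measurable, which you can secure by running your Fubini step once with a strictly positive $h$ of unit $L^q$ norm. The paper's factorization argument is more elementary and self-contained (no duality theorem), at the price of being less conceptual and tied to the uniform bound $\Gamma$. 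Both proofs treat $p=\infty$ by the same direct estimate.
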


This suffices to prove the weak version of Young's inequality for $\mu$-adaptive convolutions:
\begin{corollary}[Young's
inequality]
\label{cor:young}
Let $f\in L^1(\R^d),\ g\in L^p(\R^d),\ 1\le p\le\infty$ and $\mu\colon
\R^d\to\mathrm{GL}(d,\R)$ be a measurable function. Then $f\ast_{\mu}^p g\in L^p(\R^d)$ and
\[
\|f\ast_{\mu}^p g\|_{p}\le
\|f\|_{1}\, \|g\|_{p}\, .
\]
\end{corollary}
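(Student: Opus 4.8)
The plan is to apply Theorem \ref{theorem:young1} directly to the kernel $G = g_{\mu,p}$, since by definition $f\ast_\mu^p g = f\bar\ast g_{\mu,p}$. The theorem requires a uniform bound $\|g_{\mu,p}(\Cdot,y)\|_p \le \Gamma$ with $\Gamma$ independent of $y$, and the whole statement will follow once I show that in fact $\|g_{\mu,p}(\Cdot,y)\|_p = \|g\|_p$ for every $y\in\R^d$. Then $\Gamma := \|g\|_p$ is admissible, Theorem \ref{theorem:young1} gives $f\ast_\mu^p g\in L^p(\R^d)$, and the resulting bound reads exactly $\|f\ast_\mu^p g\|_p \le \|f\|_1\,\|g\|_p$.

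The crux is therefore computing the $p$-norm of the slice $x\mapsto g_{\mu,p}(x,y)$ for fixed $y$, and this is precisely where the exponent $1/p$ in the definition of $g_{\mu,p}$ has been tailored. For $1\le p<\infty$ I would write
\[
\|g_{\mu,p}(\Cdot,y)\|_p^p = \int \abs{\det\mu(y)}\,\abs{g\big(\mu(y)(x-y)\big)}^p\,\mathrm dx
\]
and apply the affine change of variables $z = \mu(y)(x-y)$. Because $\mu(y)\in\mathrm{GL}(d,\R)$, this is an invertible substitution with $\mathrm dz = \abs{\det\mu(y)}\,\mathrm dx$, so the Jacobian factor cancels the prefactor $\abs{\det\mu(y)}$ exactly, leaving $\int \abs{g(z)}^p\,\mathrm dz = \|g\|_p^p$, independent of $y$. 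For $p=\infty$ the prefactor is $\abs{\det\mu(y)}^0 = 1$, and the same affine bijection preserves Lebesgue null sets, so the essential supremum of $\abs{g(\mu(y)(\Cdot-y))}$ equals that of $\abs{g}$; again the $y$-dependence drops out and the slice norm is $\|g\|_\infty$. In all cases $\|g_{\mu,p}(\Cdot,y)\|_p = \|g\|_p$, as required.

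It remains to verify the measurability hypothesis of Theorem \ref{theorem:young1}, i.e.\ that $g_{\mu,p}$ is a measurable function on $\R^d\times\R^d$. I would deduce this from the measurability of $\mu$ and of $g$: the map $(x,y)\mapsto \mu(y)(x-y)$ is jointly measurable as a composition of the measurable matrix field $\mu$ with continuous operations, and composing with $g$ (after passing to a Borel representative) together with multiplication by the measurable factor $\abs{\det\mu(y)}^{1/p}$ preserves measurability. With joint measurability and the uniform slice bound in hand, Theorem \ref{theorem:young1} delivers both conclusions at once.

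I expect the only real obstacle to be the endpoint $p=\infty$ and the joint-measurability bookkeeping, rather than the change-of-variables identity, which is routine once the substitution is set up. The conceptual point is simply that the $1/p$-th power of $\abs{\det\mu(y)}$ is exactly the normalization that renders the slice $L^p$-norm invariant under the local rescaling by $\mu(y)$, which is what forces $\Gamma = \|g\|_p$ and recovers the classical Young inequality in the constant case $\mu\equiv\Id$.
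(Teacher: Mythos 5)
Your proposal is correct and follows essentially the same route as the paper: the change of variables $z=\mu(y)(x-y)$ shows $\|g_{\mu,p}(\Cdot,y)\|_p=\|g\|_p$ for $p<\infty$ (the $|\det\mu(y)|^{1/p}$ prefactor cancels the Jacobian), the case $p=\infty$ is handled trivially, and Theorem \ref{theorem:young1} then yields the claim. Your extra remarks on joint measurability are a fine (if tacit in the paper) piece of bookkeeping, but they do not change the argument.
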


Under the additional assumption that $\left\|G(x,\Cdot)\right\|_p\le\Gamma$ is
also bounded by $\Gamma$ for each $x\in\R^d$ (e.g. if $G$ is symmetric), a stronger version of Young's inequality holds:

\begin{theorem}
\label{theorem:young2}
Let $1\le p,q,r\le\infty$ such that $1+\frac{1}{r} = \frac{1}{p} + \frac{1}{q}$.
Let $f\in L^q(\R^d)$ and $G \colon\R^d\times\R^d\to\R$ be measurable such that
$\left\| G(\Cdot,y) \right\|_p\le \Gamma$ for all $y\in\R^d$ and $\left\|
G(x,\Cdot) \right\|_p\le \Gamma$ for all $x\in\R^d$ for some $\Gamma \ge 0$.
Then
\[
\|f\bar\ast G\|_{r}\le
\|f\|_{q}\, \Gamma\, .
\]
\end{theorem}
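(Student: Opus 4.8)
The plan is to mimic the classical proof of Young's convolution inequality, replacing the translation invariance of Lebesgue measure (which makes $\norm{g(x-\Cdot)}_p$ independent of $x$) by the two uniform bounds on $G$. The whole argument rests on a single application of the three-factor Hölder inequality for each fixed $x$, followed by Tonelli's theorem; the appealing structural feature to keep track of is that the hypothesis $\norm{G(x,\Cdot)}_p\le\Gamma$ enters the pointwise estimate, while $\norm{G(\Cdot,y)}_p\le\Gamma$ enters the subsequent integration in $x$.

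First I would dispose of the boundary case $r=\infty$, which by the relation $1+\tfrac1r=\tfrac1p+\tfrac1q$ is equivalent to $p$ and $q$ being conjugate, and which also absorbs the degenerate cases $p=\infty$ and $q=\infty$ (each of these forces $r=\infty$). Here no decomposition is needed: Hölder's inequality directly yields $\abs{(f\bar\ast G)(x)}\le \norm{f}_q\,\norm{G(x,\Cdot)}_p\le \norm{f}_q\,\Gamma$ for every $x$, using only the second hypothesis, and taking the supremum over $x$ gives the claim. From now on $p,q,r<\infty$ (note that $r<\infty$ already excludes $p=\infty$ and $q=\infty$).

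For the main case I would fix $x$ and split the integrand of $(f\bar\ast G)(x)$ into three factors,
\[
\abs{f(y)}\,\abs{G(x,y)}=\bigl(\abs{f(y)}^q\abs{G(x,y)}^p\bigr)^{1/r}\cdot\abs{f(y)}^{1-q/r}\cdot\abs{G(x,y)}^{1-p/r},
\]
and apply Hölder with the three exponents $r,s_1,s_2$ determined by $\tfrac1{s_1}=\tfrac1q-\tfrac1r$ and $\tfrac1{s_2}=\tfrac1p-\tfrac1r$. These are non-negative because the Young relation forces $r\ge p$ and $r\ge q$, and together with $\tfrac1r$ they sum to $1$, again by the Young relation; this is exactly the point where the hypothesis on $p,q,r$ is consumed. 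The second and third factors integrate to $\norm{f}_q^{\,1-q/r}$ and $\norm{G(x,\Cdot)}_p^{\,1-p/r}\le\Gamma^{\,1-p/r}$ respectively, the latter invoking $\norm{G(x,\Cdot)}_p\le\Gamma$, so that pointwise
\[
\abs{(f\bar\ast G)(x)}\le\Bigl(\int \abs{f(y)}^q\abs{G(x,y)}^p\,dy\Bigr)^{1/r}\norm{f}_q^{\,1-q/r}\,\Gamma^{\,1-p/r}.
\]
Raising to the $r$-th power, integrating in $x$, and applying Tonelli's theorem reduces the remaining double integral to $\int\abs{f(y)}^q\,\norm{G(\Cdot,y)}_p^p\,dy$, at which stage the first hypothesis bounds $\norm{G(\Cdot,y)}_p^p$ by $\Gamma^p$ and leaves $\Gamma^p\norm{f}_q^q$; collecting the powers of $\Gamma$ and $\norm{f}_q$ yields $\norm{f\bar\ast G}_r^r\le\norm{f}_q^r\,\Gamma^r$, and the $r$-th root finishes the proof.

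The step I expect to require the most care is not any single estimate but the bookkeeping of the degenerate exponents: when $q=r$ (equivalently $p=1$), so that $s_1=\infty$ and the factor $\abs{f}^{1-q/r}\equiv1$, or symmetrically when $p=r$ (equivalently $q=1$), one of the three Hölder factors becomes constant and the associated exponent becomes infinite, so the three-factor inequality must be read with the conventions that a factor to the power $0$ equals $1$ and that the corresponding norm is the $L^\infty$ norm. These sub-cases are harmless but should be noted explicitly, or else avoided altogether by deriving the estimate via Riesz--Thorin interpolation between the endpoint $q=1,\ r=p$ supplied by Theorem \ref{theorem:young1} and the endpoint $q=p',\ r=\infty$ supplied by the direct Hölder bound above, the geometric mean of whose operator norms is precisely $\Gamma^{1-\theta}\Gamma^{\theta}=\Gamma$, and whose interpolation line sweeps out exactly the admissible Young triples for the given $p$.
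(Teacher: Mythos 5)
Your proof is correct and takes essentially the same route as the paper's: the identical three-factor H\"older decomposition $\abs{f}^{1-q/r}\cdot\abs{G(x,\Cdot)}^{1-p/r}\cdot\big(\abs{f}^q\abs{G(x,\Cdot)}^p\big)^{1/r}$ with exponents $qr/(r-q)$, $pr/(r-p)$, $r$, followed by integration in $x$ (Tonelli) and the bound $\|G(\Cdot,y)\|_p\le\Gamma$. Your handling of the boundary case $r=\infty$ by a direct H\"older estimate, and your explicit bookkeeping of the degenerate exponents when $p=1$ or $q=1$, are if anything slightly more careful than the paper's, which dispatches $r=\infty$ by reference to Theorem \ref{theorem:young1}; the substance is unchanged.
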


\begin{remark}
In the particular case $p=1$ we also have
\begin{align*}
\int_{\R^d} \left(f\ast_{\mu}g\right)(x) \, \mathrm dx
&=\int_{\R^d} f(y) \int_{\R^d} g_\mu(x,y)\, \mathrm dx \, \mathrm dy
=
\left(\int_{\R^d} f(y) \, \mathrm dy \right)\left(\int_{\R^d} g(x)\,
\mathrm dx\right)
\intertext{and, if $g\colon \R^d\to\R$ is a probability density function,}
\|f\ast_\mu g\|_1
&=
\int_{\R^d} |f(y)| \int_{\R^d} g_\mu(x,y)\, \mathrm dx \, \mathrm dy
=
\|f\|_1\, .
\end{align*}
\end{remark}

\subsection{Derivatives of Adaptive Convolutions}
Similar to common convolutions, there are slightly modified (but
non-symmetric!) rules for the differentiation of adaptive convolutions.
This will require some notation:
\begin{notation}
\label{notation:derivativeMultilinear}
We will
use the standard multi-index notation for $\alpha =
(\alpha_1,\dots,\alpha_d)\in\N_0^d$ and $x\in\R^d$,
\begin{align*}
|\alpha|
&:=
\alpha_1 + \cdots + \alpha_d\, ,
\\
\partial^{\alpha}
&:=
\partial_{x_1}^{\alpha_1}\partial_{x_2}^{\alpha_2}\cdots
\partial_{x_d}^{\alpha_d}\, ,
\\
x^{\alpha}
&:=
x_1^{\alpha_1}x_2^{\alpha_2}\cdots x_d^{\alpha_d}\, .
\end{align*}
If $g\in\C^m(\R^d)$, $m\in\N$, then its $k$-th derivative ($k\le m$) can be
viewed as a symmetric multilinear map $D^{k}g\colon \left(\R^d\right)^k\to\R$.
Further, $k$ vectors $v_1,\dots,v_k\in\R^d$ give rise to a linear form on the space
$\mathcal{SML}^k\left(\R^d\right)$ of
k-fold symmetric multilinear forms on $\R^d$,
\[
[v_1,\dots,v_k]\colon \mathcal{SML}^k\left(\R^d\right) \to\R,\qquad
\phi\mapsto \phi\left(v_1,\dots,v_k\right),
\]
which is just the identification of the Banach space
$\left(\R^d\right)^k$ with its double dual.
Finally, for $\alpha\in\N^d$ and a matrix $M\in\R^{d\times d}$ with columns
$M_{\bullet,1},\dots,M_{\bullet,d}$, $\alpha(M)$ will denote the following
$|\alpha|$-tuple of vectors in $\R^d$:
\[
\alpha(M) :=\big[
\underbrace{M_{\bullet,1},\dots,M_{\bullet,1}}_{\alpha_1\ \mathrm{times}},
\underbrace{M_{\bullet,2},\dots,M_{\bullet,2}}_{\alpha_2\ \mathrm{times}},\dots,
\underbrace{M_{\bullet,d},\dots,M_{\bullet,d}}_{\alpha_d\ \mathrm{times}}\big]
\in \mathcal L \left(\mathcal{SML}^{|\alpha|}\left(\R^d\right),\R\right).
\]
Since it will be viewed as a linear map acting on \emph{symmetric} multilinear
forms as described above, the order of the vectors does not matter.
\end{notation}

\begin{proposition}
\label{prop:ConvDer}
Let $f\in L^1(\R^d),\ g\in C^m(\R^d)$ for some $m\in\N$,
$\mu\colon \R^d\to\gldr$ be a bounded measurable function and $1\le
p\le\infty$ such that $\partial^{\alpha} g\in L^p(\R^d)$ for all $\alpha\le m$.
\\
Then $f\ast_{\mu}^p g\in C^m(\R^d)$ and for all $\alpha\in\N^d$ with
$|\alpha|\le m$, the derivative $\partial^{\alpha}\left(f\ast_{\mu}^p
g\right)\in L^p(\R^d)$ is given by  (we slightly abuse the notation as mentioned in Remark \ref{rem:obviousObservations} (f))
\[
\partial^{\alpha}\left(f\ast_{\mu}^p g\right)
=
\left(f\cdot \alpha(\mu)\right)\ast_{\mu}^p\, D^{|\alpha|}g\, .
\]
\end{proposition}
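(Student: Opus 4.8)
The plan is to reduce the whole statement to the first-order case, carry out a single differentiation under the integral sign, and then read off the $L^p$-membership from Young's inequality.

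First I would fix $y$ and record the effect of $\partial^\alpha$ on the inner function $x\mapsto g(\mu(y)(x-y))$. Since $x\mapsto\mu(y)(x-y)$ is affine with linear part $\mu(y)$ and $g\in C^m(\R^d)$, the chain rule gives, for every $|\alpha|\le m$,
\[
\partial_x^\alpha\big[g(\mu(y)(x-y))\big] = D^{|\alpha|}g\big(\mu(y)(x-y)\big)\big[\alpha(\mu(y))\big],
\]
because each differentiation $\partial_{x_j}$ contracts the derivative of $g$ with the $j$-th column $\mu(y)_{\bullet,j}$, and symmetry of $D^{|\alpha|}g$ makes the order of the columns irrelevant; this is exactly the pairing encoded by $\alpha(\mu(y))$ in Notation \ref{notation:derivativeMultilinear}. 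Multiplying by $f(y)\,\abs{\det\mu(y)}^{1/p}$, integrating, and interpreting the result through the abused notation of Remark \ref{rem:obviousObservations}(f),
\[
\int f(y)\,\abs{\det\mu(y)}^{1/p}\, D^{|\alpha|}g\big(\mu(y)(x-y)\big)\big[\alpha(\mu(y))\big]\,dy = \big[(f\cdot\alpha(\mu))\ast_{\mu}^p D^{|\alpha|}g\big](x),
\]
so the claimed identity is precisely the assertion that $\partial^\alpha$ may be pulled inside the integral.

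Next I would set up an induction on $|\alpha|$, so that only the first-order rule $\partial_{x_j}(f\ast_\mu^p g)=(f\cdot\mu_{\bullet,j})\ast_\mu^p Dg$ has to be established directly. The inductive step applies this rule to the tensor-valued left factor $f\cdot\alpha'(\mu)\in L^1(\R^d)$ and the right factor $D^{|\alpha'|}g\in C^1$ (whose components $\partial^\beta g$, $|\beta|=|\alpha'|\le m-1$, are $C^1$), and appending the column $\mu_{\bullet,j}$ to the tuple $\alpha'(\mu)$ turns it into $\alpha(\mu)$ for $\alpha=\alpha'+e_j$. The $L^p$-membership of each derivative is then immediate from Corollary \ref{cor:young}: $f\cdot\alpha(\mu)\in L^1$ because $\mu$ is bounded, $D^{|\alpha|}g\in L^p$ because its components $\partial^\beta g$ ($|\beta|=|\alpha|$) lie in $L^p$, and the contraction is a bounded bilinear pairing.

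I expect the main obstacle to be the justification of differentiation under the integral sign, i.e. producing a dominating function that is integrable in $y$ and uniform for $x$ in a neighborhood. Using boundedness of $\mu$ one bounds the $\partial_{x_j}$-integrand by $C\,|f(y)|\,\big|\nabla g(\mu(y)(x-y))\big|$, so the difficulty is controlling $|\nabla g|$ along the argument $\mu(y)(x-y)$. For $p=\infty$ this is trivial, as $\nabla g$ is bounded; and for compactly supported $f$ it is also clean, since for $y\in\supp f$ and $x$ in a fixed compact neighborhood the arguments $\mu(y)(x-y)$ stay in a compact set on which the continuous function $\nabla g$ is bounded, giving the dominating function $C'\,|f|\,\mathbf{1}_{\supp f}\in L^1$ and hence $f\ast_\mu^p g\in C^1$ with the stated derivative. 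For general $f\in L^1$ I would approximate by compactly supported $f_n\to f$ in $L^1$ and pass to the limit via Corollary \ref{cor:young}, which forces both $f_n\ast_\mu^p g\to f\ast_\mu^p g$ and $(f_n\cdot\alpha(\mu))\ast_\mu^p D^{|\alpha|}g\to(f\cdot\alpha(\mu))\ast_\mu^p D^{|\alpha|}g$ in $L^p$; identifying the second limit as the derivative of the first and upgrading this to a classical $C^m$ statement—using continuity of the $\partial^\beta g$ and dominated convergence on difference quotients $x_n\to x$ for the needed continuity of the top-order derivative—is the delicate part that the boundedness of $\mu$ and the Young estimates are designed to make go through.
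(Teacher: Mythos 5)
Your core computation is exactly the paper's proof: the paper's entire argument is the inductive identity
$\partial_{x_j}\partial^\alpha\left(f\ast_{\mu}^p g\right)(x)=\int f(y)\,|\det\mu(y)|^{1/p}\,\big[\alpha(\mu(y)),\mu(y)_{\bullet,j}\big]\, D^{|\alpha|+1}g\big(\mu(y)(x-y)\big)\,\mathrm dy$,
i.e. the chain rule for the affine map $x\mapsto\mu(y)(x-y)$ together with appending the column $\mu_{\bullet,j}$ to the tuple $\alpha(\mu)$, with the differentiation under the integral sign performed without comment. Where you go beyond the paper is in trying to justify that interchange, and your two unconditional cases are handled correctly: for $p=\infty$ every $\partial^\beta g$ is bounded, and for compactly supported $f$ the arguments $\mu(y)(x-y)$ range over a compact set (by boundedness of $\mu$), so an integrable dominating function exists and both the derivative formula and the continuity of the resulting derivatives follow.

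The gap is the final reduction of a general $f\in L^1$ to the compactly supported case when $p<\infty$, and as far as I can see it cannot be closed. Taking $f_n\to f$ in $L^1$ and invoking Corollary \ref{cor:young} gives $f_n\ast_\mu^p g\to f\ast_\mu^p g$ and $(f_n\cdot\alpha(\mu))\ast_\mu^p D^{|\alpha|}g\to(f\cdot\alpha(\mu))\ast_\mu^p D^{|\alpha|}g$ in $L^p$ only; this identifies the limit as a \emph{weak} derivative, i.e. it proves $f\ast_\mu^p g\in W^{m,p}$, but not the asserted classical $C^m$ regularity, which would need locally uniform convergence of the derivatives --- equivalently, continuity and local boundedness of $(f\cdot\alpha(\mu))\ast_\mu^p D^{|\alpha|}g$, which is precisely what is in question. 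Your fallback, dominated convergence on difference quotients, has no dominating function: for $p<\infty$ a function in $C^0\cap L^p$ may be unbounded, so $|f(y)|\sup_{x'}\abs{Dg(\mu(y)(x'-y))}$ need not be integrable. This obstruction is genuine, not technical. Take $d=1$, $p=2$, $\mu\equiv 1$: let $g'$ consist of smooth bumps of height $n$ and width $n^{-4}$ near each integer $n\ge 2$, each followed by a matching negative bump (so that the antiderivative $g$ is bounded, smooth, and $g,g'\in L^2$), and let $f\in L^1$ place mass $n^{-2}$ on intervals of width $\tfrac13 n^{-4}$ positioned so that, after reflection, they sit on the plateaus where $g'=n$. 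Then $\int \abs{f(y)\,g'(-y)}\,\mathrm dy=\sum_n n^{-1}=\infty$, so the claimed formula for the derivative is meaningless at $x=0$, and one checks that the difference quotients of $f\ast g$ at $0$ diverge, so $f\ast g\notin C^1$. In other words, the step you flagged as ``delicate'' is the step at which the proposition requires an additional hypothesis (e.g. compactly supported $f$, or $\partial^\beta g\in L^\infty$); your write-up is more honest on this point than the paper's own proof, which performs the same interchange with no justification, but as a proof of the statement in the stated generality it --- like the paper's --- is complete only in the cases you settled.
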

\subsection{Continuity Equation for Convolutions and Adaptive Convolutions}
\label{section:ConvolutionCont}
Assume that $(\rho_t)_{t\ge 0}$ is a time-dependent probability density, which
fulfills the continuity equation
\[
\partial_t \rho_t = -\diver(\rho_t v_t) = -\diver(j_t)
\]
for some velocity field $v_t\colon\R^d\to\R^d$ (or current $j_t\colon\R^d\to\R^d$).
Assume further that we want to smooth $\rho_t$ by considering an
adaptive convolution $\rho_{g,t} = \rho_t\ast_{\mu_t} g$ with a time-dependent
adaptation function $\left(\mu_t\right)_{t\ge 0}$ (since we are dealing with
probability density functions, $p=1$ here).
How does the continuity equation have
to be modified in order to describe the evolution of $\rho_{g,t}$?
We were surprised to find an explicit formula for the modified continuity
equation:

\begin{proposition}
\label{prop:convolutionContinuity2}
Let $\rho_t\in L^1(\R^d)$ be a time-dependent probability
density function, which fulfills the continuity equation
\[
\partial_t\rho_t + \diver j_t = 0
\hspace{1.5cm} (t\in \R)
\]
for some current $j_t\in L^1(\R^d,\R^d)$, such that
$(\rho_t,\, j_t)_{t\in\R}\in C^1\left(\R^{1+d},\R^{1+d}\right)$.
\\
Further, let $g\in L^p\cap C^1(\R^d)$ for some $1\le p <\infty$,
\[
\gamma (x) :=
x\, g(x)
,\qquad
N_t(x) = 
\begin{pmatrix}
j_t(x)^{\intercal} \left(D_x
\left(\mu_{t}\right)_{1,\Cdot}^{\intercal}\right)^{\intercal}\hspace{-0.1cm}(x)\,
\\ \vdots \\
j_t(x)^{\intercal} \left(D_x
\left(\mu_{t}\right)_{d,\Cdot}^{\intercal}\right)^{\intercal}\hspace{-0.1cm}(x)\,
\end{pmatrix}
\in\R^{d\times d}
\hspace{1.5cm}
(x\in\R^d),
\]
such that $g,\, \gamma$ and all their first derivatives are
bounded: $\displaystyle g,\,
\gamma\in W^{1,\infty}$.
\\
Finally, let $(\mu_t)_{t\in\R}\in
C^2\left(\R^{1+d},\gldr\right)$, such that for
each $i,j=1,\dots,d,\ t\in\R$ 
\[
\left(\mu_t\right)_{i,j},\,
\nabla\left(\mu_t\right)_{i,j},\, \partial_t\left(\mu_t\right)_{i,j},\, 
\left(\mu_t^{-1}\right)_{i,j}\,  \in L^{\infty}.
\]
Then
\[
\rho_{g,t} : = \rho_t\ast_{\mu_t} g
\hspace{1.5cm} (t\in \R)
\]
is a probability density function, which fulfills the continuity equation
\[
\boxed{
\partial_t\rho_{g,t} =  - \diver j_{g,t}
\qquad\text{for}\qquad j_{g,t}
=
j_t\ast_{\mu_t} g - \left[\mu_t^{-1}\left(N_t + \rho_t\, \partial_t\mu_t\right)
\mu_t^{-1}\right] \ast_{\mu_t}\gamma\, .
}
\]
Further, $(\rho_{g,t},\, j_{g,t})_{t\in[0,\infty)} \in
C^1\left(\R^{1+d},\R^{1+d}\right)$.
\end{proposition}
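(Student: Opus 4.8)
The plan is to work directly with the integral definition. Write the kernel
\[
K_t(x,y) := \abs{\det\mu_t(y)}\, g\big(\mu_t(y)(x-y)\big),
\]
so that $\rho_{g,t}(x) = \int \rho_t(y)\, K_t(x,y)\, dy$ and, more generally, $(A\ast_{\mu_t}\varphi)(x) = \int A(y)\,\abs{\det\mu_t(y)}\,\varphi\big(\mu_t(y)(x-y)\big)\,dy$ for the matrix- and vector-valued factors appearing in $j_{g,t}$ (the abuse of notation of Remark \ref{rem:obviousObservations}(f)). Before differentiating, I would record that $\rho_{g,t}$ is a probability density: nonnegativity is immediate from $\rho_t,g\ge 0$, and $\int\rho_{g,t}=1$ is the Fubini computation in the Remark following Corollary \ref{cor:young}. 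I would then note that the differentiations under the integral sign and the claimed $C^1$-regularity of $(\rho_{g,t},j_{g,t})$ are exactly what the hypotheses $g,\gamma\in W^{1,\infty}$, the $L^\infty$-bounds on $\mu_t,\mu_t^{-1},\nabla\mu_t,\partial_t\mu_t$, and $\rho_t,j_t\in L^1$ guarantee, via dominated convergence and Proposition \ref{prop:ConvDer}. I expect this part to be routine but hypothesis-hungry rather than the conceptual core.

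The engine of the proof is the computation of the first derivatives of $K_t$. Writing $\mu=\mu_t(y)$ and $z:=\mu_t(y)(x-y)$, Jacobi's formula $\partial\abs{\det\mu}=\abs{\det\mu}\,\tr(\mu^{-1}\partial\mu)$ (valid since $\det\mu_t(y)$ keeps constant sign along the continuous path $t\mapsto\mu_t(y)\in\gldr$) together with the chain rule yields expressions for $\partial_t K_t$, $\nabla_x K_t$, and $\nabla_y K_t$. The crucial one is the relation between the $y$- and $x$-gradients,
\[
\partial_{y_k}K_t = -\,\partial_{x_k}K_t + \abs{\det\mu}\,\tr\!\big(\mu^{-1}\partial_{y_k}\mu\big)\,g(z) + \abs{\det\mu}\,\nabla g(z)^{\intercal}(\partial_{y_k}\mu)(x-y),
\]
which holds because $\partial_{x_k}z=\mu e_k$ while $\partial_{y_k}z=(\partial_{y_k}\mu)(x-y)-\mu e_k$; the $-\mu e_k$ piece is precisely what produces the $-\partial_{x_k}K_t$ term. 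This single identity is the conceptual heart: it converts a $y$-integration by parts into an $x$-divergence plus explicit correction terms, and it is where the matrix $N_t=(j_t\cdot\nabla)\mu_t$ (the directional derivative of $\mu_t$ along $j_t$) is born.

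Then I would assemble. Differentiating $\rho_{g,t}(x)=\int\rho_t K_t\,dy$ in $t$, substituting the continuity equation $\partial_t\rho_t=-\diver_y j_t$, and integrating by parts in $y$ (boundary terms vanish because $j_t\in L^1$ and $K_t,\nabla_y K_t$ are bounded) gives
\[
\partial_t\rho_{g,t}(x) = \int j_t(y)^{\intercal}\nabla_y K_t\, dy + \int\rho_t(y)\,\partial_t K_t\, dy.
\]
Inserting the key identity splits off exactly $-\int j_t^{\intercal}\nabla_x K_t\,dy = -\diver_x(j_t\ast_{\mu_t}g)(x)$, leaving a remainder $R(x)$ assembled from the trace terms and the $\nabla g(z)^{\intercal}(\,\Cdot\,)(x-y)$ terms carried by $j_t$ (from $\partial_{y_k}K_t$) and by $\rho_t$ (from $\partial_t K_t$). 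Separately I would compute $\diver_x(M_t\ast_{\mu_t}\gamma)$ for $M_t:=\mu_t^{-1}(N_t+\rho_t\partial_t\mu_t)\mu_t^{-1}$, using $\abs{\det\mu}\,\gamma(z)=z\,K_t$ and $\partial_{x_i}z_j=\mu_{ji}$; this produces the integrand $\abs{\det\mu}\big[\tr(M_t\mu)\,g(z)+\nabla g(z)^{\intercal}\mu M_t z\big]$. The two algebraic identities $\mu M_t\mu=N_t+\rho_t\partial_t\mu$ (so $\mu M_t z=(N_t+\rho_t\partial_t\mu)(x-y)$) and $\tr(M_t\mu)=\sum_k(j_t)_k\tr(\mu^{-1}\partial_{y_k}\mu)+\rho_t\,\tr(\mu^{-1}\partial_t\mu)$ then match this expression with $R(x)$ term by term, giving $R=\diver_x(M_t\ast_{\mu_t}\gamma)$ and hence $\partial_t\rho_{g,t}=-\diver_x\big(j_t\ast_{\mu_t}g-M_t\ast_{\mu_t}\gamma\big)=-\diver_x j_{g,t}$. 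The main obstacle, beyond checking the differentiation and integration-by-parts hypotheses, is this final bookkeeping: keeping the $\mu^{-1}$-sandwich in $M_t$ straight and recognizing $N_t$ as $(j_t\cdot\nabla)\mu_t$, so that the $j_t$- and $\rho_t$-carrying pieces line up cleanly.
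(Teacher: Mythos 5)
Your proposal is correct and follows essentially the same route as the paper's proof: differentiate under the integral, substitute the continuity equation, integrate by parts in $y$, and use the chain-rule splitting $\partial_{y_k}\big[\mu_t(y)(x-y)\big] = (\partial_{y_k}\mu_t)(x-y) - \mu_t(y)e_k$ together with Jacobi's formula to trade $y$-derivatives of the kernel for $x$-derivatives plus correction terms, which are then recognized as $\diver_x$ of the $\gamma$-convolution with $\mu_t^{-1}(N_t+\rho_t\,\partial_t\mu_t)\mu_t^{-1}$. The only difference is organizational --- the paper pushes each pointwise identity directly into $x$-divergence form and assembles a single divergence, whereas you split off $-\diver_x(j_t\ast_{\mu_t}g)$ first and match the remainder against the expanded target current --- but the decomposition, the key identities, and the level of rigor in justifying the analytic steps are the same.
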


\begin{corollary}
Under the assumptions of Proposition \ref{prop:convolutionContinuity2}, if
$\rho_{g,t} = \rho_t\ast g_{A_t}$
is the common convolution, but with time-dependent scaling matrix
$\left(A_t\right)_{t\in\R} \in C^2\left(\R,\gldr\right)$ of the smoothing
kernel,
\[
g_{A_t}(x) = |\det A_t|\, g(A_t x),
\hspace{2cm}
\gamma_{A_t}(x) = g_{A_t}(x)\, A_t\, x,
\]
then the probability density function $\rho_{g,t}$ solves the continuity
equation
\[
\partial_t\rho_{g,t} =  - \diver j_{g,t}
\qquad\text{for}\qquad j_{g,t}
=
j_t\ast g_{A_t} - \left[\rho_t\, A_t^{-1}\, \partial_t A_t\, A_t^{-1}\right]
\ast\gamma_{A_t}\, .
\]
Further, $(\rho_{g,t},\, j_{g,t})_{t\in[0,\infty)} \in
C^1\left(\R^{1+d},\R^{1+d}\right)$.
\end{corollary}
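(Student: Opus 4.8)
The plan is to recognize the common convolution with a time-dependent scaling matrix as the special case of the $\mu$-adaptive convolution in which the adaptation function is spatially constant, and then to specialize the formula of Proposition~\ref{prop:convolutionContinuity2}.

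First I would verify the identification. Setting $\mu_t(y) := A_t$ for all $y\in\R^d$ -- constant in $x$ and $C^2$ in $t$ -- Definition~\ref{def:adaptiveConvolution} gives, for any scalar- or matrix-valued $f$,
\[
(f\ast_{\mu_t} g)(x) = \int f(y)\,|\det A_t|\, g\big(A_t(x-y)\big)\,\mathrm dy = \int f(y)\, g_{A_t}(x-y)\,\mathrm dy = (f\ast g_{A_t})(x),
\]
so $\rho_t\ast g_{A_t} = \rho_t\ast_{\mu_t} g$ with $\mu_t\equiv A_t$. I would then check that this $\mu_t$ meets the hypotheses of Proposition~\ref{prop:convolutionContinuity2}: $(\mu_t)_{t\in\R}\in C^2(\R^{1+d},\gldr)$ holds because $A_t\in C^2(\R,\gldr)$ and $\mu_t$ is independent of $x$; and for each fixed $t$ the entries $(\mu_t)_{i,j}=(A_t)_{i,j}$, $\partial_t(\mu_t)_{i,j}$ and $(\mu_t^{-1})_{i,j}=(A_t^{-1})_{i,j}$ are constant in $x$ and hence lie in $L^\infty(\R^d)$, while $\nabla(\mu_t)_{i,j}\equiv 0$. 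The assumptions on $\rho_t$, $j_t$ and $g$ are untouched.

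The decisive simplification is that the spatial-derivative matrix $N_t$ vanishes: since every entry of $\mu_t$ is constant in $x$, each row $D_x(\mu_t)_{i,\cdot}$ is zero, so $N_t\equiv 0$. Substituting $\mu_t\equiv A_t$, $N_t=0$ and $\partial_t\mu_t=\partial_t A_t$ into the boxed identity of Proposition~\ref{prop:convolutionContinuity2}, and pulling the scalar $\rho_t$ out of the matrix product, yields
\[
j_{g,t} = j_t\ast_{A_t} g - \big[\rho_t\, A_t^{-1}\,\partial_t A_t\, A_t^{-1}\big]\ast_{A_t}\gamma.
\]

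It remains only to rewrite the two adaptive convolutions as ordinary ones. The first term is immediate from the identification above, $j_t\ast_{A_t} g = j_t\ast g_{A_t}$. For the second I would invoke $\gamma(x)=x\,g(x)$ together with the elementary identity
\[
|\det A_t|\,\gamma(A_t z) = |\det A_t|\, g(A_t z)\,(A_t z) = g_{A_t}(z)\, A_t z = \gamma_{A_t}(z),
\]
which shows $(M\ast_{A_t}\gamma)(x) = \int M(y)\,|\det A_t|\,\gamma\big(A_t(x-y)\big)\,\mathrm dy = (M\ast\gamma_{A_t})(x)$ for any matrix-valued $M$; applying it with $M=\rho_t A_t^{-1}\partial_t A_t A_t^{-1}$ produces precisely the asserted current. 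The claims that $\rho_{g,t}$ is a probability density and that $(\rho_{g,t},j_{g,t})\in C^1(\R^{1+d},\R^{1+d})$ are inherited directly from the Proposition. I do not anticipate a real obstacle; the only points needing care are the scalar/matrix/vector bookkeeping in the second term and confirming that the $L^\infty$ conditions of the Proposition are satisfied trivially by a spatially constant $\mu_t$.
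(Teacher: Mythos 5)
Your proposal is correct and matches the paper's (implicit) argument exactly: the paper states this corollary without a separate proof precisely because it is the specialization of Proposition \ref{prop:convolutionContinuity2} to a spatially constant adaptation function $\mu_t \equiv A_t$, for which $N_t \equiv 0$ and the adaptive convolutions collapse to ordinary ones with kernels $g_{A_t}$ and $\gamma_{A_t}$. Your verification of the hypotheses (constants lie in $L^\infty$, $\nabla(\mu_t)_{i,j}\equiv 0$) and the bookkeeping identity $|\det A_t|\,\gamma(A_t z) = \gamma_{A_t}(z)$ supply exactly the details the paper leaves to the reader.
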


\section{Automatic Choice of the Adaptation Function $\mu$}
\label{section:ConvolutionAdaptationFunction}

The adaptation function $\mu$ in Example \ref{example:differingVariation} was
chosen \emph{manually} for the adaptive smoothing of a function $f\in
L^1(\R^d)$.
Let us now discuss how this choice can be performed \emph{automatically} in
dependence of the function $f$ that we want to smooth. To this end, we will have
to get a grip on the local variation of $f$, an issue that we will address by means of certain phase space transforms
introduced in the following subsection.

Finding a good dependence for the adaptation function $\mu = \mu_f$ on the
function $f$ is a difficult task and will only be partially answered here.
We will justify the implicit formula
\[
\mu_f^2(x) = \frac{\left( \nabla
f\nabla f^{\intercal} - f\, D^2 f\right) \ast
G_{(\lambda\mu_f)^{-2}(x)}^2}{(2-\lambda^2)\, f^2 \ast
G_{(\lambda\mu_f)^{-2}(x)}^2}(x)\, ,
\]
where $0<\lambda<\sqrt{2}$, but neither prove uniqueness, nor any kind of
optimality.
Also, we will assume that $f\not\equiv 0$ lies in the Sobolev space
$W^{2,2}(\R^d,\R)$, and restrict ourselves to radially symmetric smoothing kernels $g$, i.e.
$g(x) = \gamma(\|x\|_2^2)$ for some function $\gamma\colon \R_{\ge 0}\to\R$.
\begin{remark}
The square root $M^{1/2} = \sqrt{M}$ of a symmetric and positive definite matrix
$M\in\R^{d\times d}$ will denote the unique symmetric and positive definite matrix $N\in\R^{d\times
d}$ such that $N^2 = M$ (see \cite[Theorem 7.2.6]{horn2012matrix}).
\end{remark}
Let us first gather some conditions, which we would like our adaptation function
$\mu_f$ to fulfill (see also the motivation in Section
\ref{section:convolutionMotivation}):
\begin{axiom}[Adaptation Axioms]
\label{cond:adaptation}
Let $\mathcal M = \{\mu\colon \R^d\to\gldr\colon \mu \text{ measurable}\}$.
We say that a mapping
$$
\mtm\colon W^{2,2}(\R^d,\R)\to \mathcal M,
\qquad
f\mapsto \mu_f,
$$
fulfills the \emph{Adaptation Axioms}, if for any $a\in\R^d$,
$\alpha\in\R\setminus\{0\}$, $A\in\gldr$, any parametrized function
$f^{(t)} = \sum_{k=1}^K f_k(\Cdot-a_k^{(t)})$, $t\ge 0,$ with $f_k\in
W^{2,2}(\R^d,\R)$, $a_k^{(t)}\in\R^d$, such that
$\|a_k^{(t)}-a_j^{(t)}\|\xrightarrow{t\to\infty}\infty$ for all $k\neq j$, and
any $x\in\R^d$,
\begin{enumerate}
  \item[(A1)]
  $\displaystyle
  \mu_{f(\Cdot\, - a)}(x) = \mu_f(x-a)$ \hfill (invariance under shifting),
  \item[(A2)]
  $\displaystyle \mu_{\alpha\cdot f} = \mu_f$ \hfill (invariance under scalar multiplication),
  \item[(A3)]
  $\displaystyle \mu_{f(A\cdot\, \Cdot)}^\intercal(x)\, \mu_{f(A\cdot\, \Cdot)}(x)
  =
  A^{\intercal}\, \mu_f^\intercal(A x)\,\mu_f(A x)\, A$
  \hfill (invariance under scaling),
  \item[(A4)]
  $\mu_{f^{(t)}}(x+a_k^{(t)}) \xrightarrow{t\to\infty}\mu_{f_k}(x)$ for all
  $k=1,\dots,K$
  \hfill
  (locality),
  \item[(A5)]
  $\mu_f(x)$ should describe some kind of variation of
  $f$ locally around $x\in\R^d$.
\end{enumerate}
\end{axiom}
While axiom (A5) is rather subjective, axioms (A1)--(A3) are chosen such
that the $\mu_f$-adaptive convolution behaves nicely under shifting and scaling
of $f$. Axiom (A4) guarantees that a sum $f = \sum_{k=1}^K f_k$
of several functions $f_1,\dots,f_K$ with `far apart' supports is smoothed in
approximately the same way as these functions would have been smoothed
separately, $f\ast_{\mu_f} g \approx \sum_{k=1}^K f_k\ast_{\mu_{f_k}}g$, see
also the motivating Example \ref{example:differingVariation}.
These consequences are stated in the following theorem:
\begin{theorem}
\label{theorem:AdaptationConditions}
Assuming Adaptation Axioms \ref{cond:adaptation} (A1)--(A4) and adopting that
notation, we have for any $f\in W^{2,2}(\R^d,\R)$, radially symmetric $g\in
L^p$ and $x\in\R^d$:
\begin{enumerate}[(i)]
  \item $\displaystyle (f(\Cdot\, - a)\ast^p_{\mu_{f(\Cdot\, - a)}} g) (x)=
  (f\ast^p_{\mu_f}g) (x-a)$
  \hfill
  \small (shifted function $\Rightarrow$ shifted convolution)\normalsize,
  \item $\displaystyle (\alpha f)\ast^p_{\mu_{\alpha f}} g =
  \alpha (f\ast^p_{\mu_f}g) $
  \hfill
  \small (stretched function $\Rightarrow$ stretched convolution)\normalsize,
  \item $\displaystyle (f(A\cdot\Cdot)\ast^p_{\mu_{f(A\cdot\Cdot)}} g) (x)=
  (f\ast^p_{\mu_f}g) (Ax)$
  \hfill
  \small (scaled function $\Rightarrow$ scaled convolution)\normalsize.
  \item $(f^{(t)}\ast_{\mu_{f^{(t)}}} g)(x) = \sum_{k=1}^K
  (f_k\ast_{\mu_{f_k}}^p g)(x-a_k^{(t)})$ asymptotically for $t\to\infty$,
  \\
  more precisely:  
  $(f^{(t)}\ast_{\mu_{f^{(t)}}}^p g)(x+a_k^{(t)})
  \xrightarrow{t\to\infty}(f_k\ast_{\mu_{f_k}}g)(x)$
  \hfill
  \small (locality)\normalsize.
\end{enumerate}
\end{theorem}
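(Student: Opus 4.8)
The plan is to verify each of the four claims by unfolding the definition of $f\ast_{\mu}^p g$ from Definition \ref{def:adaptiveConvolution} and feeding in the corresponding axiom, the only genuinely geometric step being part (iii), where the radial symmetry of $g$ is essential. For parts (i) and (ii) I would argue directly. Writing $f_a := f(\Cdot - a)$ and substituting $\mu_{f_a}(y) = \mu_f(y-a)$ from (A1) into the integral defining $f_a \ast_{\mu_{f_a}}^p g$, the change of variables $z = y - a$ immediately gives $(f_a \ast_{\mu_{f_a}}^p g)(x) = (f \ast_{\mu_f}^p g)(x-a)$; no Jacobian appears since a shift has unit determinant. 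Part (ii) is even shorter: by linearity of the adaptive convolution in its first argument (Remark \ref{rem:obviousObservations}~(d)) together with (A2), which states $\mu_{\alpha f} = \mu_f$, the scalar $\alpha$ factors straight out of the integral.

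Part (iii) is the heart of the matter. Setting $f_A := f(A\cdot\Cdot)$ and substituting $z = Ay$ (Jacobian $|\det A|^{-1}$) in the integral for $(f_A \ast_{\mu_{f_A}}^p g)(x)$, I would rewrite the argument of $g$ via $x - A^{-1}z = A^{-1}(Ax - z)$, so that it reads $B(z)(Ax - z)$ with $B(z) := \mu_{f_A}(A^{-1}z)\, A^{-1}$. Axiom (A3), evaluated at $A^{-1}z$, says exactly that $B(z)^\intercal B(z) = \mu_f(z)^\intercal \mu_f(z)$; hence $B(z)$ and $\mu_f(z)$ share the same positive factor in their polar decompositions, so $B(z) = O(z)\, \mu_f(z)$ for some orthogonal $O(z)$. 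This is where the restriction to radially symmetric $g$ enters: since $g(Ow) = g(w)$ for every orthogonal $O$, the factor $O(z)$ disappears and $g(B(z)(Ax-z)) = g(\mu_f(z)(Ax-z))$. It then remains to track determinants: from $|\det B(z)| = |\det \mu_f(z)|$ one reads off $|\det \mu_{f_A}(A^{-1}z)| = |\det A|\,|\det \mu_f(z)|$, and collecting the resulting powers of $|\det A|$ — which balance cleanly in the case $p=1$ relevant for probability densities — yields the scaling identity.

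For part (iv) I would expand $f^{(t)} = \sum_{k} f_k(\Cdot - a_k^{(t)})$ inside the integral for $(f^{(t)} \ast_{\mu_{f^{(t)}}}^p g)(x + a_k^{(t)})$ and substitute $w = y - a_j^{(t)}$ in the $j$-th summand. The diagonal term $j=k$ has integrand $f_k(w)\,|\det \mu_{f^{(t)}}(w + a_k^{(t)})|^{1/p}\, g(\mu_{f^{(t)}}(w + a_k^{(t)})(x-w))$, which by the locality axiom (A4) converges pointwise to the integrand of $(f_k \ast_{\mu_{f_k}}^p g)(x)$; a dominated-convergence argument then passes the limit through the integral. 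In each off-diagonal term $j \neq k$ the argument of $g$ is shifted by $a_k^{(t)} - a_j^{(t)}$, whose norm tends to infinity, so the term should vanish as the mass of $g$ escapes to infinity.

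The hard part will be the analytic justification of the limits in part (iv): to invoke dominated convergence I need a $t$-uniform integrable majorant, which requires uniform (not merely pointwise) control on $|\det \mu_{f^{(t)}}|^{1/p}$ and on the matrix arguments $\mu_{f^{(t)}}(\Cdot)(x-\Cdot)$, and for the off-diagonal contributions I must quantify the decay of $g$ at infinity — for instance from $g\in L^p$ together with a mild regularity or decay hypothesis so that translating its mass to infinity kills the integral. A secondary point deserving care is the determinant bookkeeping in (iii), where the powers of $|\det A|$ must be balanced explicitly.
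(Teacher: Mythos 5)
Your proposal follows essentially the same route as the paper's proof: unfold the definition and apply (A1)/(A2) with a trivial substitution for (i)--(ii); for (iii) use radial symmetry of $g$ together with (A3) and the change of variables $z=Ay$ (the paper writes $g(x)=\gamma(\|x\|_2^2)$ and manipulates the quadratic form where you invoke a polar decomposition $B(z)=O(z)\,\mu_f(z)$ --- an equivalent device); and for (iv) expand the sum, shift variables, keep the diagonal term via (A4) and let the off-diagonal terms vanish. The two caveats you flag are real but do not separate you from the paper: the stray factor $|\det A|^{1/p-1}$ in (iii) (which cancels only for $p=1$) is silently ignored in the paper's proof, which drops the $1/p$ exponents on the determinants altogether, and the dominated-convergence majorant you call the hard part of (iv) is likewise absent from the paper, which simply asserts the limit passes through the integral.
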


\begin{remark}
Adaptation Axiom \ref{cond:adaptation} (A3) is a necessary detour around the
more appealing condition
\[
\mu_{f(A\cdot\, \Cdot)}(x) = \mu_f(Ax)A,
\]
since the product of a symmetric and positive definite matrix with an invertible
matrix is in general no longer symmetric and positive definite. This is also
the reason why we have to restrict ourselves to radially symmetric smoothing kernels $g$. Roughly speaking, covariance
matrices are easier to treat than their square roots.
\end{remark}

One possible choice that fulfills the Adaptation Axioms
\ref{cond:adaptation} (A1)--(A4) is
\[
\mu_f^{(a)} = \sqrt{\frac{\nabla f\, \nabla f^{\intercal}}{f^2}}.
\]
However, $\nabla f\nabla f^{\intercal}$ is only positive \emph{semi}-definite
and therefore $\mu_f^{(a)}\in\gldr$ could be violated. Also, it
is unclear in how far the Adaptation Axiom (A5) is fulfilled.
Obviously, this last axiom is not rigorous and we will discuss it now. In order
to get a grasp on it, we will make use of three transformations
explained in the following subsection.

\subsection{Phase Space Transformations}
\label{section:PhaseSpace}

In this section, we will discuss four transformations, which will allow us to quantify the (local) variation of a function.
All four transformations can be viewed from various perspectives and we will
focus on the time-frequency point of view. In the following, $\mathcal
S(\R^d,\C)$ will denote the Schwartz space of rapidly decreasing functions.

\begin{definition}[Gaussian, Fourier transform, windowed Fourier transform,
adaptive windowed Fourier transform, Wigner transform]
We define
\begin{compactenum}[(i)]
  \item the \emph{Gaussian function} $G[a,\Sigma]\colon \R^d\to\R$ with mean
  $a\in\R^d$ and symmetric and positive definite covariance matrix
  $\Sigma\in\gldr$ as well as the abbreviation $G_\Sigma
  := G[0,\Sigma]$,
  \item the \emph{Fourier transform} $\F: \mathcal S(\R^d,\C)\to \mathcal
  S(\R^d,\C)$,
  \item the \emph{windowed Fourier transform} $\F_{Q}:
  \mathcal S(\R^d,\C)\to \mathcal S(\R^d\times \R^d,\C)$ with window
  width
  \footnote{Typically, windowed Fourier transforms are defined for isotropic
  covariance matrices, i.e. $Q = \sigma\Id$. Using arbitrary covariance
  matrices allows for different window sizes in different directions.}
  $Q\in\gldr$, $Q^{\intercal} = Q$,
  \item the \emph{adaptive windowed Fourier transform} $\F_{Q}:
  \mathcal S(\R^d,\C)\to \mathcal S(\R^d\times \R^d,\C)$ with variable
  window width $Q\colon \R^d\to\gldr$, $Q^{\intercal} = Q$,
  \item the \emph{Wigner transform} $W: \mathcal S(\R^d,\C)\to \mathcal
  S(\R^d\times\R^d,\R)$ by
\footnote{Usually, the Wigner transform
is defined by
$
W(f,g)(x,\xi) = (2\pi)^{-d}\int_{\R^d}\overline{f\left(x+\frac{y}{2}\right)}\,
g\left(x-\frac{y}{2}\right)\, e^{iy^\intercal\xi}\, \mathrm dy.
$
We will only use its definition on the diagonal, $Wf := W(f,f)$, where it is
real-valued, which can be seen by applying the transformation $y\mapsto -y$ to
the defining integral.}
\end{compactenum}
\begin{align}
\label{equ:generalGaussian}
G[a,\Sigma](x)
&=
(2\pi)^{-d/2}\, |\det \Sigma|^{-1/2}\,
\exp\left[-\frac{1}{2}(x-a)^{-\intercal}\Sigma^{-1} (x-a)\right],
\\
\mathcal F f (\xi)
&=
(2\pi)^{-d/2} \int_{\R^d} f(y)\, e^{-iy^\intercal\xi}\,
\mathrm dy\, ,
\\
\label{equ:windowedFourier}
\F_{Q} f(x,\xi)
&=
\pi^{-d/4} \int_{\R^d}f(y)\, G_{Q^2}(x-y)\,
e^{-iy^\intercal\xi}\, \mathrm dy\, ,
\\
\label{equ:adaptiveWindowedFourier}
\F_{Q} f(x,\xi)
&=
\pi^{-d/4} \int_{\R^d}f(y)\, G_{Q^2(x)}(x-y)\,
e^{-iy^\intercal\xi}\, \mathrm dy\, ,
\\
Wf(x,\xi)
&=
(2\pi)^{-d}\int_{\R^d}\overline{f\left(x+\frac{y}{2}\right)}\,
f\left(x-\frac{y}{2}\right)\, e^{iy^\intercal\xi}\, \mathrm dy.
\end{align}
\end{definition}

We will restrict ourselves to presenting only a few properties of these
transforms, further properties can be found in e.g. \cite{folland1989harmonic}.

\begin{proposition}[Plancherel Theorem and Fourier Inversion Formula]
\label{prop:fourierInverse}
\ \\
The Fourier transform is an isometric isomorphism on $\mathcal S(\R^d,\C)$ with
inverse given by
\[
\mathcal F^{-1} f(x) = (2\pi)^{-d/2} \int_{\R^d} f(\xi)\,
e^{ix^\intercal\xi}\, \mathrm d\xi\, .
\]
\end{proposition}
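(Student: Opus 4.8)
The plan is to establish three things in sequence: that $\mathcal F$ maps $\mathcal S(\R^d,\C)$ into itself, that the stated $\mathcal F^{-1}$ is a genuine two-sided inverse, and that $\mathcal F$ preserves the $L^2$-norm. First I would record the two intertwining identities obtained by differentiating under the integral sign and by integrating by parts, namely $\partial^\alpha(\mathcal F f)(\xi) = \mathcal F\big((-iy)^\alpha f\big)(\xi)$ and $\mathcal F(\partial^\alpha f)(\xi) = (i\xi)^\alpha\,\mathcal F f(\xi)$. Combined, these bound every seminorm $\sup_\xi |\xi^\beta\,\partial^\alpha\mathcal F f(\xi)|$ by an $L^1$-norm of a Schwartz function, so $\mathcal F f\in\mathcal S$ and $\mathcal F$ is continuous for the Schwartz seminorms; linearity is immediate from the definition.

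The heart of the argument is the inversion formula, for which the main tool is the \emph{multiplication formula} $\int_{\R^d}\mathcal F f(\xi)\,g(\xi)\,\mathrm d\xi = \int_{\R^d} f(y)\,\mathcal F g(y)\,\mathrm dy$, a direct consequence of Fubini's theorem since both iterated integrands are products of Schwartz functions and hence absolutely integrable on $\R^d\times\R^d$. To recover $f$, I would fix $x\in\R^d$ and insert a Gaussian regulariser, writing $\mathcal F^{-1}\mathcal F f(x) = \lim_{\varepsilon\to 0}(2\pi)^{-d/2}\int \mathcal F f(\xi)\,e^{ix^\intercal\xi}\,e^{-\varepsilon\|\xi\|^2/2}\,\mathrm d\xi$, the limit being justified by dominated convergence because $\mathcal F f\in\mathcal S\subset L^1$. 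Substituting the definition of $\mathcal F f$ and applying Fubini rewrites this as $\int f(y)\,\Phi_\varepsilon(x-y)\,\mathrm dy$, where $\Phi_\varepsilon$ is the inverse transform of $e^{-\varepsilon\|\cdot\|^2/2}$. The key computation $\mathcal F G_\Sigma(\xi) = (2\pi)^{-d/2}\exp\big(-\tfrac12\xi^\intercal\Sigma\xi\big)$, reduced to $d=1$ by the product structure of the Gaussian and then obtained either by completing the square along a shifted contour or by solving the first-order ODE satisfied by the Gaussian, shows that $\Phi_\varepsilon$ is a normalised Gaussian concentrating at the origin as $\varepsilon\to 0$. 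Since $f$ is continuous and bounded, this approximate-identity family gives $\int f(y)\,\Phi_\varepsilon(x-y)\,\mathrm dy\to f(x)$, so $\mathcal F^{-1}\mathcal F f = f$. The reverse composition $\mathcal F\mathcal F^{-1}f = f$ follows verbatim, or by observing $\mathcal F^{-1}f(x) = \mathcal F f(-x)$; hence $\mathcal F$ is a bijection of $\mathcal S(\R^d,\C)$ with the stated inverse.

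Plancherel is then a short corollary. Using the conjugation identity $\overline{\mathcal F f} = \mathcal F^{-1}\bar f$ and applying the multiplication formula with $g = \mathcal F^{-1}\bar f$ yields $\|\mathcal F f\|_2^2 = \int \mathcal F f\,\overline{\mathcal F f} = \int f\,\mathcal F\mathcal F^{-1}\bar f = \int f\,\bar f = \|f\|_2^2$, so $\mathcal F$ is an $L^2$-isometry on $\mathcal S$, completing the claim that it is an isometric isomorphism. The only genuinely computational obstacle is the Gaussian transform, which simultaneously supplies the approximate identity and fixes every normalisation constant; everything else reduces to Fubini's theorem and dominated convergence, whose hypotheses hold automatically because the entire argument stays within the Schwartz class.
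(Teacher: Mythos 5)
Your argument is correct, but it is worth knowing that the paper does not actually prove this proposition: its entire proof reads ``See \cite{folland1989harmonic}'', i.e.\ the Plancherel theorem and inversion formula are treated as classical background and delegated to Folland's book. What you have written out is essentially the standard self-contained argument that such references give, so in substance you have reconstructed the proof the paper is pointing to rather than diverging from it: the intertwining identities $\partial^\alpha(\mathcal F f) = \mathcal F\big((-iy)^\alpha f\big)$ and $\mathcal F(\partial^\alpha f) = (i\xi)^\alpha \mathcal F f$ to show $\mathcal F\colon\mathcal S\to\mathcal S$ continuously, the multiplication formula via Fubini, Gaussian regularisation plus the approximate-identity property to obtain inversion, and the conjugation identity $\overline{\mathcal F f} = \mathcal F^{-1}\bar f$ to reduce Plancherel to inversion. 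All steps are sound; the one piece of bookkeeping you should tighten is the normalisation of the regularising kernel: with the paper's convention $\mathcal F^{-1}g(x) = (2\pi)^{-d/2}\int g(\xi)\,e^{ix^\intercal\xi}\,\mathrm d\xi$, the function $\Phi_\varepsilon := \mathcal F^{-1}\big[e^{-\varepsilon\|\cdot\|^2/2}\big](x) = \varepsilon^{-d/2}e^{-\|x\|^2/(2\varepsilon)}$ integrates to $(2\pi)^{d/2}$, not to $1$; after substituting the definition of $\mathcal F f$ and applying Fubini, the kernel that actually appears is $(2\pi)^{-d/2}\Phi_\varepsilon(x-y) = (2\pi\varepsilon)^{-d/2}e^{-\|x-y\|^2/(2\varepsilon)}$, and it is \emph{this} product that is the normalised Gaussian approximate identity. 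The conclusion is unaffected, and the rest of the proof, including the deduction $\|\mathcal F f\|_2 = \|f\|_2$ from the multiplication formula with $g = \mathcal F^{-1}\bar f$, is exactly right. The trade-off between the two routes is the obvious one: the paper's citation is appropriate because the result is entirely classical and plays only a supporting role there (the $\delta$-distribution technique used in Propositions \ref{prop:muFourier}--\ref{prop:muWigner2} is what the paper actually extracts from it), while your version makes the statement self-contained at the cost of a page of standard analysis.
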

\begin{proof}
See \cite{folland1989harmonic}.
\end{proof}
For $f\in\mathcal S(\R^d,\C)$, the Fourier inversion formula implies
\[
(2\pi)^{-d} \int_{\R^d} f(x) \int_{\R^d} e^{-ix^\intercal\xi}\, \mathrm d\xi\,
\mathrm dx \ =\ 
(2\pi)^{-d/2} \int_{\R^d} \F f(\xi)\,  e^{i0^\intercal\xi}\, \mathrm d\xi\, 
\ =\ 
\F^{-1}\F f(0)
\ =\ 
f(0). 
\]
The technique of using $(2\pi)^{-d}\int_{\R^d} e^{-ix^\intercal\xi}\, \mathrm
d\xi$ as a $\delta$-distribution will be used in the proofs of Propositions
\ref{prop:muFourier}, \ref{prop:muFBI}, \ref{prop:AdaptiveMuFBI} and
\ref{prop:muWigner2}.

From the point of view of time-frequency analysis, the Fourier transform yields a
decomposition of the signal $f$ into its frequencies: for each frequency $\xi$,
it indicates the extent of its occurrence in $f$.
However, one is often interested in the local frequencies of $f$, meaning, which
frequencies of $f$ occur at (or close to) a specific
point in time $x$.
This can be analyzed using a windowed Fourier transform, also called Gabor transform, which does not
`see' the values and frequencies of $f$ far from $x$. Applied to each
point in time $x$, this defines the mapping \eqref{equ:windowedFourier} on the
phase space.

\begin{figure} 
\centering
\begin{subfigure}[b]{0.13\textwidth}
	\centering	
	\includegraphics[width=3.235\textwidth,angle = 	90]{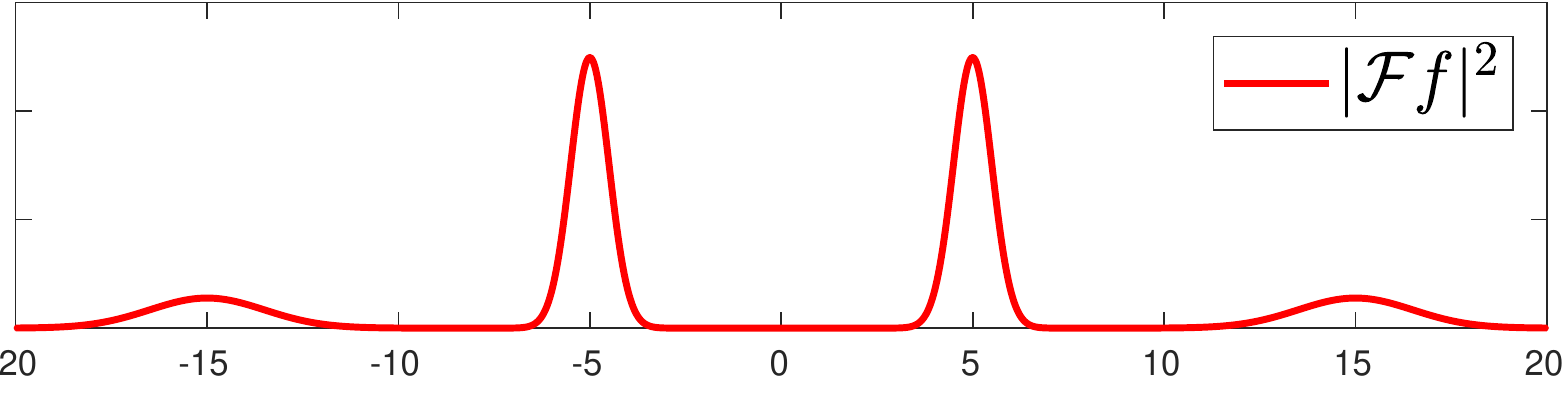}
\end{subfigure}
\hfill
\begin{subfigure}[b]{0.42\textwidth}
	\centering	                
	\includegraphics[width=\textwidth]{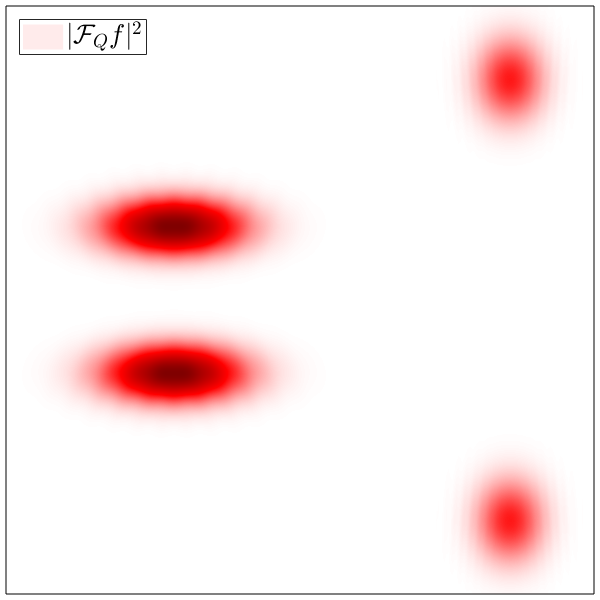}
\end{subfigure}
\hfill
\begin{subfigure}[b]{0.42\textwidth}
	\centering	                
	\includegraphics[width=\textwidth]{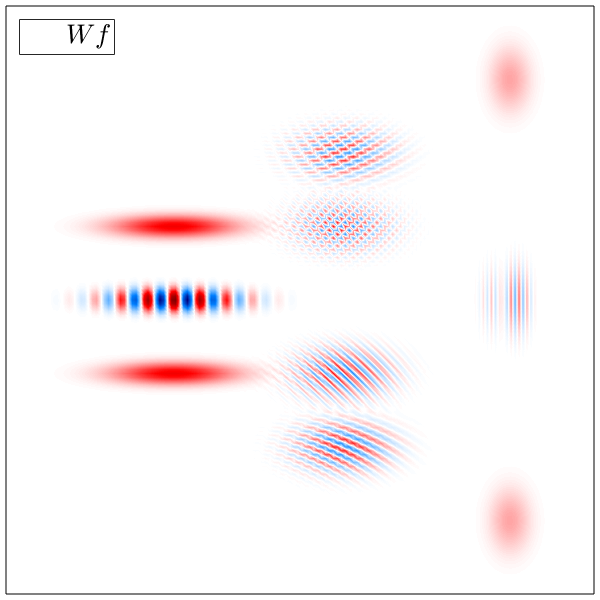}
\end{subfigure}
\vfill
\vspace{0.3cm}
\hspace{0.13\textwidth}
\hfill
\begin{subfigure}[b]{0.42\textwidth}
	\centering
	\includegraphics[width=\textwidth]{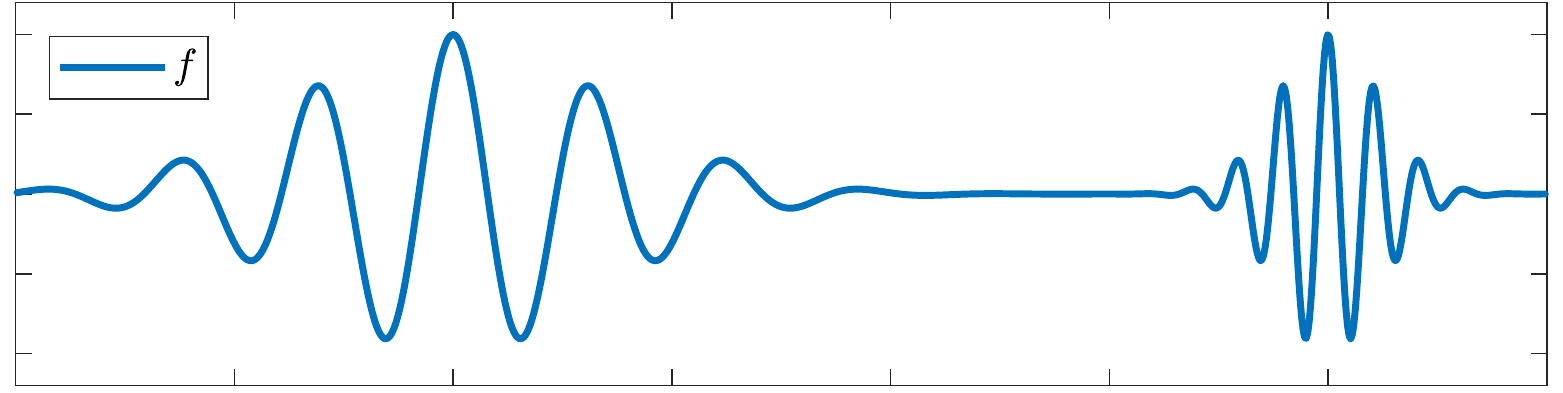}
\end{subfigure}
\hfill
\begin{subfigure}[b]{0.42\textwidth}
	\centering
	\includegraphics[width=\textwidth]{images/Function_for_Fourier}
\end{subfigure}
\caption{Visualization of how different frequencies of a function $f$ are
represented by the Fourier transform $\F f$, the windowed Fourier transform
$\F_{Q}f$ and the Wigner transform $Wf$ (only the moduli squared of $\F f$
and $\F_{Q}f$ are plotted).}
\end{figure}

The width $Q$ of the Gaussian function $G_{Q^2}$ is a
double-edged sword: The smaller it is chosen, the more accurate the
time-frequency description becomes in the $x$-direction, since only values that
are very close to the considered time $x$ contribute to $\F_{Q}
f(x,\Cdot)$.
However, the smaller the window, the more  `difficult' it becomes to determine
the frequencies in this small time period and the more `blurred' the frequency
decomposition $\F_{Q} f(x,\Cdot)$ becomes in $\xi$-direction.
This issue is a manifestation of the so-called uncertainty principle, see the
discussion in \cite[Chapter 2]{grochenig2001foundations}.

The adaptive windowed Fourier transform \eqref{equ:adaptiveWindowedFourier}
allows to perform this trade-off differently in different regions of $\R^d$ by
choosing the width to be $x$-dependent.

At the expense of losing positivity, the Wigner transform $Wf$ of $f$ provides
a way to `deblurr' $|\F_{Q} f|^2$, resulting in the `correct' marginal
densities, as stated by the following proposition:
\begin{proposition}
\label{prop:husimi}
Let $f\in\mathcal S(\R^d,\C)$ and $\displaystyle G_{\sigma}(x,\xi) =
\left(2\pi\sigma^2\right)^{-d/2} \exp\left(-\frac{\|x\|^2+\|\xi\|^2}{2\sigma^2}\right)$
denote a Gaussian in phase space. Then the Wigner transform $Wf$ of $f$ fulfills
\begin{align*}
&\int_{\R^d} Wf(x,\xi)\, \mathrm d\xi\
\ =\  
|f(x)|^2\, ,
\qquad
\int_{\R^d} Wf(x,\xi)\, \mathrm dx
\ =\
|\F f(\xi)|^2\, ,
\intertext{and}
&Wf \ast G_{\sqrt{1/2}} = |\mathcal \F_{1} f|^2.
\end{align*}
\end{proposition}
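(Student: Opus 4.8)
The plan is to handle the two marginal identities and the smoothing identity separately: the first two follow from the $\delta$-distribution technique described just before Proposition~\ref{prop:fourierInverse}, and the third from a direct Gaussian computation. Throughout, interchanging orders of integration is justified by absolute integrability, since $f\in\mathcal S(\R^d,\C)$ renders every integrand rapidly decreasing.

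For the first marginal I would integrate the defining formula for $Wf$ over $\xi$ and swap the order of integration. The inner $\xi$-integral produces $(2\pi)^{-d}\int_{\R^d} e^{iy^\intercal\xi}\,\mathrm d\xi$, which acts as a Dirac delta in $y$ in the sense explained in the text; it collapses the remaining $y$-integral to $\overline{f(x)}\,f(x)=|f(x)|^2$. For the second marginal I would instead integrate over $x$ and substitute $u=x+\tfrac{y}{2}$, $w=x-\tfrac{y}{2}$, a unimodular change of variables with $\mathrm dx\,\mathrm dy=\mathrm du\,\mathrm dw$. Since $y^\intercal\xi=(u-w)^\intercal\xi$, the double integral factorizes as $\big(\int\overline{f(u)}\,e^{iu^\intercal\xi}\,\mathrm du\big)\big(\int f(w)\,e^{-iw^\intercal\xi}\,\mathrm dw\big)$; by the definition of $\F$ each factor equals $(2\pi)^{d/2}$ times $\overline{\F f(\xi)}$ respectively $\F f(\xi)$, and the prefactor $(2\pi)^{-d}$ cancels to leave $|\F f(\xi)|^2$.

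For the smoothing identity I would expand the phase-space convolution
\[
(Wf\ast G_{\sqrt{1/2}})(x,\xi)=\int_{\R^d}\int_{\R^d}Wf(x',\xi')\,G_{\sqrt{1/2}}(x-x',\xi-\xi')\,\mathrm dx'\,\mathrm d\xi'
\]
and carry out the $\xi'$-integration first. Inserting the definition of $Wf$, the $\xi'$-integral is a Gaussian against $e^{iz^\intercal\xi'}$ and, by completing the square, evaluates to $\pi^{d/2}\,e^{iz^\intercal\xi}\,e^{-\|z\|^2/4}$. Applying the same substitution $y=x'+\tfrac{z}{2}$, $y'=x'-\tfrac{z}{2}$ then recasts the expression as a constant times
\[
\int_{\R^d}\int_{\R^d}f(y)\,\overline{f(y')}\,e^{-i(y-y')^\intercal\xi}\,e^{-\|y-y'\|^2/4}\,e^{-\|x-(y+y')/2\|^2}\,\mathrm dy\,\mathrm dy'.
\]
On the other side, squaring the modulus of the definition \eqref{equ:windowedFourier} of the windowed Fourier transform at window width $Q=\Id$ yields the same double integral, but weighted by $e^{-\|x-y\|^2/2-\|x-y'\|^2/2}$. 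The identity then reduces to the algebraic fact that the two Gaussian exponents coincide,
\[
-\tfrac{1}{4}\|y-y'\|^2-\big\|x-\tfrac{y+y'}{2}\big\|^2=-\tfrac{1}{2}\|x-y\|^2-\tfrac{1}{2}\|x-y'\|^2,
\]
which holds because the cross terms in $y^\intercal y'$ cancel.

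I expect the routine parts to be the Gaussian integrals and the unimodular substitution; the one place that needs genuine care is the bookkeeping of the scalar prefactors in the third identity, namely reconciling the factor $(2\pi)^{-d}$ from $Wf$, the normalization of $G_{\sqrt{1/2}}$, the $\pi^{d/2}$ from the $\xi'$-Gaussian, and the $\pi^{-d/2}$ arising from $|\F_{1}f|^2$, after the exponents have already been matched. Getting the normalization of the phase-space Gaussian consistent with $|\F_1 f|^2$ is the crux; once the constants are aligned, the identity is immediate.
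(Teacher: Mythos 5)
Your route is necessarily different from the paper's, because the paper does not prove Proposition \ref{prop:husimi} at all --- it simply cites \cite[(3.10)]{hillery1997distribution}. A self-contained derivation is therefore a genuine contribution, and your treatment of the two marginals is correct: the $\delta$-distribution technique endorsed in the text collapses the $\xi$-integral to $\abs{f(x)}^2$, and the unimodular substitution $u=x+\tfrac{y}{2}$, $w=x-\tfrac{y}{2}$ factorizes the $x$-integral into $(2\pi)^{d/2}\overline{\F f(\xi)}$ times $(2\pi)^{d/2}\F f(\xi)$, cancelling the $(2\pi)^{-d}$. The Gaussian exponent identity you invoke for the third part is also correct.

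The gap sits exactly at the step you defer as "the crux": with the paper's normalizations the constants do \emph{not} align, and asserting that they do is not a proof. Carry out your own bookkeeping. The left-hand side collects $(2\pi)^{-d}$ from $Wf$, $\pi^{-d/2}$ from $G_{\sqrt{1/2}}$ (since $(2\pi\sigma^2)^{-d/2}=\pi^{-d/2}$ for $\sigma^2=\tfrac12$), and $\pi^{d/2}$ from your $\xi'$-integral, giving $(2\pi)^{-d}$ in total; the right-hand side collects $\bigl(\pi^{-d/4}\,(2\pi)^{-d/2}\bigr)^2=\pi^{-d/2}(2\pi)^{-d}$, the $\pi^{-d/4}$ coming from definition \eqref{equ:windowedFourier} and the $(2\pi)^{-d/2}$ from $G_{\Id}$ in \eqref{equ:generalGaussian}. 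Since the exponents match, your computation actually yields $Wf\ast G_{\sqrt{1/2}}=\pi^{d/2}\,\abs{\F_1 f}^2$, not the stated equality. A one-dimensional check with $f(x)=\pi^{-1/4}e^{-x^2/2}$ confirms this: one finds $Wf(x,\xi)=\pi^{-1}e^{-x^2-\xi^2}$, hence $\bigl(Wf\ast G_{\sqrt{1/2}}\bigr)(x,\xi)=\tfrac{1}{2\sqrt{\pi}}\,e^{-(x^2+\xi^2)/2}$, whereas $\abs{\F_1 f}^2(x,\xi)=\tfrac{1}{2\pi}\,e^{-(x^2+\xi^2)/2}$. This is not something cleverer algebra on your side can repair: it shows that the proposition is consistent only if the phase-space Gaussian, being a function on $\R^{2d}$, is normalized with exponent $-d$ rather than $-d/2$, i.e. $G_\sigma(x,\xi)=(2\pi\sigma^2)^{-d}\exp\bigl(-\tfrac{\|x\|^2+\|\xi\|^2}{2\sigma^2}\bigr)$ --- under that reading your argument closes exactly. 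As written, your proof cannot terminate in the claimed identity; a complete version must either prove the corrected statement or explicitly flag the factor $\pi^{d/2}$ arising from the paper's stated normalization, a discrepancy that plausibly entered when translating the conventions of \cite{hillery1997distribution}.
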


\begin{proof}
See \cite[(3.10)]{hillery1997distribution}.
\end{proof}

\subsection{Transformations and the Adaptation Function $\mu$}
We are now ready to construct an adaptation function $\mu_f$ that fulfills the
Adaptation Axioms \ref{cond:adaptation}.
Let us start with a \emph{global} version.
A natural way to describe the global variation of a function $f\in
L^1(\R^d,\R)$ is to consider its spectral density $\rho\propto|\F f|^2$, since
functions with high oscillations tend to have high values of $|\F f|^2$ away
from the origin.

The spectral density also has the proper behaviour under scaling of $f$ -- if
$f$ is scaled by some factor $\alpha\neq 0$, $\tilde f(x) = f(\alpha x)$, the
(global) variation is scaled by $\alpha^{-1}$ and in fact the Fourier transform
(and thereby the spectral density) is scaled accordingly:
\begin{equation}
\label{equ:scaleFourier}
\F\tilde f (\xi)
=
(2\pi)^{-d/2} \int_{\R^d} f(\alpha y)\, e^{-iy^\intercal\xi}\,
\mathrm dy
=
\frac{(2\pi)^{-d/2}}{\alpha^d} \int_{\R^d} f(y)\, e^{-iy^\intercal\xi/\alpha}\,
\mathrm dy
=
\alpha^{-d}\F f(\alpha^{-1}\xi)
\, .
\end{equation}
In order to assign a value for the variation to a function $f$, we will
therefore consider the expectation value and covariance defined in the
following proposition:
\begin{proposition}
\label{prop:muFourier}
Let $f\in W^{2,2}(\R^d,\R)\setminus\{0\}$. The expectation value and covariance
matrix of the probability distribution $\P_\rho$ given by the spectral density
\[
\rho = \frac{|\F f|^2}{\|\F f\|_{L^2}^2} = \frac{|\F
f|^2}{\|f\|_{L^2}^2}
\]
(here we used the Plancherel theorem \ref{prop:fourierInverse}) are:
\[
\E_{\rho} = 0
\qquad\text{and}\qquad
\Cov_\rho = \frac{\int_{\R^d} \left(\nabla f\, \nabla
f^{\intercal}\right)(z)\, \mathrm dz}{\|f\|_{L^2}^2}\, .
\]
\end{proposition}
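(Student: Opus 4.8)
The plan is to compute the first and second moments of $\P_\rho$ directly, exploiting two structural facts: that $f$ is real-valued (which forces $|\F f|^2$ to be even), and that multiplication by $\xi_j$ on the Fourier side corresponds to differentiation, $\F(\partial_j f)(\xi) = i\xi_j\, \F f(\xi)$. First I would record that $\rho$ is a genuine probability density: it is nonnegative, and by Plancherel (Proposition \ref{prop:fourierInverse}, extended from $\mathcal S$ to $L^2$ by density) $\int_{\R^d}|\F f|^2 = \|f\|_{L^2}^2$, so $\int_{\R^d}\rho = 1$. I would then verify that the relevant moments are finite: combining the differentiation rule with Plancherel gives
\[
\int_{\R^d}|\xi|^2\,|\F f(\xi)|^2\,\mathrm d\xi
=
\sum_{j=1}^d \|\xi_j\,\F f\|_{L^2}^2
=
\sum_{j=1}^d \|\partial_j f\|_{L^2}^2
=
\|\nabla f\|_{L^2}^2 <\infty,
\]
which is finite since $f\in W^{2,2}\subset W^{1,2}$, and the first moment is controlled by $\int_{\R^d}|\xi|\,|\F f|^2 \le \tfrac12\int_{\R^d}(1+|\xi|^2)|\F f|^2<\infty$. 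Hence $\E_\rho$ and $\Cov_\rho$ are well-defined.

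For the mean, I would use that $f$ real-valued implies $\overline{\F f(\xi)} = \F f(-\xi)$, so $|\F f|^2$ is an even function on $\R^d$. Therefore the integrand $\xi\mapsto \xi\,|\F f(\xi)|^2$ defining $\E_\rho$ is odd and integrates to zero, giving $\E_\rho = 0$.

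Since the mean vanishes, $\Cov_\rho = \|f\|_{L^2}^{-2}\int_{\R^d}\xi\xi^\intercal\,|\F f(\xi)|^2\,\mathrm d\xi$, and it suffices to treat this entrywise. Writing $\xi_j\,\F f = -i\,\F(\partial_j f)$ and $\xi_k\,\overline{\F f} = i\,\overline{\F(\partial_k f)}$, the $(j,k)$ integrand becomes $\xi_j\xi_k|\F f|^2 = \F(\partial_j f)\,\overline{\F(\partial_k f)}$, so that, by Plancherel and the reality of $f$,
\[
\int_{\R^d}\xi_j\xi_k\,|\F f|^2\,\mathrm d\xi
=
\langle \F(\partial_j f),\,\F(\partial_k f)\rangle_{L^2}
=
\langle \partial_j f,\,\partial_k f\rangle_{L^2}
=
\int_{\R^d}(\partial_j f)(\partial_k f)\,\mathrm dz,
\]
which is exactly the $(j,k)$ entry of $\int_{\R^d}\nabla f\,\nabla f^\intercal$. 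Dividing by $\|f\|_{L^2}^2$ yields the claimed formula for $\Cov_\rho$.

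The computation itself is short; the only real care is needed in justifying the $L^2$-level identities — the Fourier-differentiation rule $\F(\partial_j f) = i\xi_j\F f$ and Plancherel's theorem — for a mere Sobolev function $f\in W^{2,2}$ rather than a Schwartz function, and in justifying the finiteness of the moments so that the odd-symmetry argument and the entrywise interchange are legitimate. I expect this to be the main (though routine) obstacle, and would handle it by approximating $f$ in $W^{2,2}$ by Schwartz functions, for which Proposition \ref{prop:fourierInverse} applies directly, and passing to the limit using the moment bounds above.
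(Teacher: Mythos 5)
Your proof is correct, but it takes a genuinely different route from the paper's. The paper proves this proposition ``analogously to Proposition \ref{prop:muWigner2}'': one expands $\abs{\F f}^2(\xi)$ as a double integral over $y_1,y_2$ with phase $e^{-i(y_1-y_2)^\intercal\xi}$, changes variables to $z_1=y_1-y_2$, $z_2=y_1+y_2$, and uses $(2\pi)^{-d}\int_{\R^d}e^{-iz_1^\intercal\xi}\,\mathrm d\xi$ as a $\delta$-distribution (respectively, pairs $\xi\xi^\intercal$ with the phase to produce $-D_{z_1}^2$ acting on the integrand at $z_1=0$). For the zeroth moment this recovers $\|f\|_{L^2}^2$, and for the second moment it yields
\[
\int_{\R^d}\xi\xi^\intercal\,\abs{\F f}^2(\xi)\,\mathrm d\xi
=\tfrac12\int_{\R^d}\left(\nabla f\,\nabla f^\intercal-f\,D^2f\right)(z)\,\mathrm dz
=\int_{\R^d}\left(\nabla f\,\nabla f^\intercal\right)(z)\,\mathrm dz,
\]
where the last step is an integration by parts; the same template carries over verbatim to the windowed Fourier and Wigner transforms in Propositions \ref{prop:muFBI}, \ref{prop:AdaptiveMuFBI} and \ref{prop:muWigner2}, which is precisely why the paper adopts it. You instead work entrywise with the identity $\xi_j\,\F f=-i\,\F(\partial_j f)$ and Plancherel, so that $\int\xi_j\xi_k\abs{\F f}^2\,\mathrm d\xi=\langle\partial_j f,\partial_k f\rangle_{L^2}$ directly, with the mean handled by the evenness of $\abs{\F f}^2$ for real $f$. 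Your route is shorter and more easily made rigorous at the Sobolev level (it avoids the distributional $\delta$-manipulation entirely, needs only $f\in W^{1,2}$, and your moment bounds plus the density argument close the gap between $\mathcal S$ and $W^{2,2}$ cleanly); what it does not buy is the uniformity of the paper's technique, since the Plancherel-in-$\xi$ shortcut does not transfer as directly to the Wigner-transform computation where the paper's $\delta$-trick and change of variables are actually needed. In effect, the integration by parts that the paper performs explicitly is absorbed in your argument into the Fourier differentiation rule.
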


The adaptation function $\mu_f$, which in this global setting is just a constant
adaptation matrix $\mu_f\in\gldr$, can now be assigned the square root of the
covariance matrix,
\[
\boxed{
\mu_f^{(b)} := \sqrt{\Cov_\rho}
=
\frac{\sqrt{\int_{\R^d} \left(\nabla f\, \nabla
f^{\intercal}\right)(z)\, \mathrm dz}}{\|f\|_{L^2}}\, ,
}
\]
and the Adaptation Axioms \ref{cond:adaptation} (A1)--(A3) can easily be
verified (in a global, $x$-independent sense).
However, we are not interested in a \emph{global}, but in a \emph{local}
adaptation. Therefore, we will study the `local frequencies' of $f$ by taking
its windowed Fourier transform $\F_{Q} f(x,\xi)$ instead of its Fourier
transform.

Again, let us consider the expectation value and covariance of the corresponding
probability density in $\xi$:
\begin{proposition}
\label{prop:muFBI}
Let $f\in W^{2,2}(\R^d,\R)\setminus\{0\}$ and $Q\in\gldr$, $Q^{\intercal} = Q$.
Then, for each $x\in\R^d$, the expectation value and covariance matrix of the
probability distribution $\P_{\rho_x}$ given by the density
\[
\rho_x(\xi) = \frac{|\F_Q f(x,\xi)|^2}{\|\F_Q f (x,\Cdot)\|_{L^2}^2}
\]
are:
\[
\E_{\rho_x} = 0
\qquad\text{and}\qquad
\Cov_{\rho_x} = \frac{Q^{-2}}{2} + \frac{\left( \nabla f\nabla
f^{\intercal} - f\, D^2 f\right) \ast G_{Q^2}^2}{2 f^2 \ast
G_{Q^2}^2}(x)\, .
\]
\end{proposition}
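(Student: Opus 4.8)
The plan is to recognize the windowed Fourier transform as an ordinary Fourier transform in the integration variable and then turn all frequency moments into spatial integrals by Plancherel. Fix $x$ and set $g_x(y):=G_{Q^2}(x-y)$ and $h_x(y):=f(y)\,g_x(y)$. By \eqref{equ:windowedFourier}, $\F_Q f(x,\xi)=\pi^{-d/4}(2\pi)^{d/2}\,\F h_x(\xi)$, so up to an $x$-independent constant that cancels in $\rho_x$ we have $\rho_x\propto|\F h_x|^2$. Since $f\in W^{2,2}(\R^d,\R)$ and $g_x$ is Schwartz, $h_x$ and its first derivatives lie in $L^2$; this makes the first two $\xi$-moments finite and, via the Plancherel theorem (Proposition~\ref{prop:fourierInverse}), gives the normalizing integral $\int_{\R^d}|\F h_x|^2\,\mathrm d\xi=\int_{\R^d}h_x^2\,\mathrm dy=(f^2\ast G_{Q^2}^2)(x)$, the common denominator of both assertions.

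The two moments follow from $\xi_j\,\F h_x=-i\,\F(\partial_j h_x)$ (the paper's sign convention) together with Plancherel and the fact that $h_x$ is real-valued. For the mean, $\int_{\R^d}\xi_j|\F h_x|^2\,\mathrm d\xi=-i\int_{\R^d}\partial_j h_x\,h_x\,\mathrm dy=-\tfrac{i}{2}\int_{\R^d}\partial_j(h_x^2)\,\mathrm dy=0$, so $\E_{\rho_x}=0$ and the covariance equals the normalized second moment, $\Cov_{\rho_x}=M/(f^2\ast G_{Q^2}^2)(x)$ with $M:=\int_{\R^d}\nabla h_x\,\nabla h_x^{\intercal}\,\mathrm dy$, because $\int_{\R^d}\xi_j\xi_k|\F h_x|^2\,\mathrm d\xi=\int_{\R^d}\partial_j h_x\,\partial_k h_x\,\mathrm dy$.

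It remains to evaluate $M$. I would write $\nabla h_x=g_x\nabla f+f\nabla g_x$ and use the Gaussian identities $\nabla_y g_x=g_x\,Q^{-2}(x-y)$ (hence $g_x^2\,Q^{-2}(x-y)=\tfrac12\nabla_y(g_x^2)$) and $\nabla_y[Q^{-2}(x-y)]=-Q^{-2}$. Expanding the outer product splits $M$ into the term $\int g_x^2\,\nabla f\nabla f^{\intercal}=(\nabla f\nabla f^{\intercal}\ast G_{Q^2}^2)(x)$, two cross terms, and the window term $\int f^2\,\nabla g_x\nabla g_x^{\intercal}$. Integrating the window term by parts (via $g_x^2\,Q^{-2}(x-y)=\tfrac12\nabla_y(g_x^2)$) produces $\tfrac12 Q^{-2}(f^2\ast G_{Q^2}^2)(x)$ plus a contribution that exactly cancels one of the two cross terms; integrating the surviving cross term by parts once more gives $-\tfrac12\big[(\nabla f\nabla f^{\intercal}+f\,D^2f)\ast G_{Q^2}^2\big](x)$. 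Adding the three contributions yields $M=\tfrac12\big[(\nabla f\nabla f^{\intercal}-f\,D^2f)\ast G_{Q^2}^2\big](x)+\tfrac12 Q^{-2}(f^2\ast G_{Q^2}^2)(x)$, and dividing by $(f^2\ast G_{Q^2}^2)(x)$ gives the stated covariance.

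The computation is conceptually straightforward; the main obstacle is bookkeeping --- keeping the matrix-valued integrations by parts symmetric and checking that every boundary term vanishes. The latter is where the hypothesis $f\in W^{2,2}$ enters together with the rapid decay of $g_x$ and all its derivatives: it guarantees that $h_x^2$, $f\,\partial_j f\,g_x^2$ and the analogous products are integrable with integrable gradients, so the divergence terms integrate to zero, and it simultaneously justifies differentiating under the integral sign and applying Plancherel to $\partial_j h_x\in L^2$.
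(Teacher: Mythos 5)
Your proposal is correct: the identification $\F_Q f(x,\Cdot)\propto\F h_x$ with $h_x=f\,G_{Q^2}(x-\Cdot)$, the moment identities $\int\xi_j|\F h_x|^2\,\mathrm d\xi=-\tfrac{i}{2}\int\partial_j(h_x^2)\,\mathrm dy=0$ and $\int\xi_j\xi_k|\F h_x|^2\,\mathrm d\xi=\int\partial_jh_x\,\partial_kh_x\,\mathrm dy$, and the Gaussian integrations by parts all check out and assemble to exactly the stated covariance, $M=\tfrac12\,Q^{-2}(f^2\ast G_{Q^2}^2)(x)+\tfrac12\bigl[(\nabla f\nabla f^\intercal-f\,D^2f)\ast G_{Q^2}^2\bigr](x)$ divided by the normalization $(f^2\ast G_{Q^2}^2)(x)$. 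Your route differs from the paper's in its formalization: the paper proves this proposition ``analogously to Proposition \ref{prop:muWigner2}'', i.e.\ by writing $|\F_Qf(x,\xi)|^2$ as a double integral over $(y_1,y_2)$, changing variables to $z_1=y_1-y_2$, $z_2=y_1+y_2$, and collapsing the $\xi$- and $z_1$-integrations by treating $(2\pi)^{-d}\int e^{-iz_1^\intercal\xi}\,\mathrm d\xi$ as a $\delta$-distribution, with the inserted factor $\xi\xi^\intercal$ becoming $-D_{z_1}^2$ evaluated at $z_1=0$; the vanishing mean is obtained there from the conjugation symmetry $\rho_x(-\xi)=\rho_x(\xi)$ for real $f$. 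You replace the $\delta$-distribution collapse by the Plancherel isometry applied to $h_x$ and its first derivatives --- the same Fourier duality (moments in $\xi$ correspond to derivatives in $y$), but packaged as a theorem rather than a formal manipulation. What your version buys: it is rigorous as stated for $f\in W^{2,2}$ (Plancherel extends to $L^2$ by density, whereas the paper's $\delta$-technique is justified in the text only through the Schwartz-space inversion formula and implicitly needs an approximation argument for $W^{2,2}$ functions), and it skips the change of variables. What the paper's template buys: uniformity --- the identical scheme handles Propositions \ref{prop:muFourier}, \ref{prop:muFBI}, \ref{prop:AdaptiveMuFBI} and \ref{prop:muWigner2}, including the Wigner case, where your trick of recognizing the transform as an ordinary Fourier transform of a windowed copy of $f$ is not available in the same form. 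The only point worth tightening in your write-up is to say explicitly that Proposition \ref{prop:fourierInverse} is stated on $\mathcal S(\R^d,\C)$ and that you invoke its standard $L^2$ extension for $h_x,\partial_jh_x\in L^2$.
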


Again, we can set
\begin{equation}
\label{equ:FBIChoiceMu}
\boxed{
\mu_f^{(c)}(x) := \sqrt{\Cov_{\rho_x}} = \sqrt{\frac{Q^{-2}}{2} +
\frac{\left( \nabla f\nabla f^{\intercal} - f\, D^2 f\right) \ast
G_{Q^2}^2}{2 f^2 \ast G_{Q^2}}(x)}\ .
}
\end{equation}
However, while the Adaptation Axioms \ref{cond:adaptation} (A1), (A2) and (A4) are
fulfilled, the scale invariance (A3) is violated. The reason for this is
that the window width $Q$ does not scale
with the local variation of $f$ --- a formula analogous to
(\ref{equ:scaleFourier}) does not hold for windowed Fourier transforms.
One might try to adapt $Q$ locally by using the adaptive windowed Fourier
transform \eqref{equ:adaptiveWindowedFourier} but this would require a priori
knowledge of the local variation of $f$, which we are trying to find in the
first place.

We will discuss two different solutions for this problem:
\begin{enumerate}
  \item[(A)] One might avoid the circular reasoning described
  above by a fixed point approach (and a fixed point iteration in practical
  applications):
	\begin{compactitem}
	\item Derive a formula analogous to \eqref{equ:FBIChoiceMu} for adaptive
	windowed Fourier transforms.
	\item  Choose the local width $Q(x)\in\gldr$ proportional to $\mu_f^{-1}(x)$,
	resulting in an implicit formula for $\mu_f$.	
	\end{compactitem}
  \item[(B)] Since the lack of scale invariance is caused by the (constant) width of the
window, or, in other words, by the blurry way we look at the function, we will
`deblurr' it by replacing the term $|\F_Q f(x,\xi)|^2$ in the
probability density $\rho_x$ from Proposition \ref{prop:muFBI} with the Wigner
transform $Wf(x,\xi)$. This replacement is motivated by the discussion in
Section \ref{section:PhaseSpace} and by Proposition \ref{prop:husimi} in
particular.
Since $Wf$ can take negative values, we will consider $|Wf|^2$ instead of $Wf$,
which is a probability density function, if properly normalized.
\end{enumerate}

Let us start with approach (A).

\begin{proposition}
\label{prop:AdaptiveMuFBI}
Let $f\in W^{2,2}(\R^d,\R)\setminus\{0\}$ and $Q\colon \R^d\to\gldr$. Then,
for each $x\in\R^d$, the expectation value and covariance matrix of the probability distribution $\P_{\rho_x}$ given
by the density
\[
\rho_x(\xi) = \frac{|\F_Q f(x,\xi)|^2}{\|\F_Q f (x,\Cdot)\|_{L^2}^2}
\]
are:
\[
\E_{\rho_x} = 0
\qquad\text{and}\qquad
\Cov_{\rho_x} = \frac{(Q^\intercal Q)^{-1}(x)}{2} + \frac{\left( \nabla
f\nabla f^{\intercal} - f\, D^2 f\right) \ast G_{(Q^\intercal Q)(x)}^2}{2 f^2 \ast
G_{(Q^\intercal Q)(x)}^2}(x)\, .
\]
\end{proposition}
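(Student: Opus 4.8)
The plan is to reduce the statement entirely to Proposition \ref{prop:muFBI}, exploiting the elementary observation that \emph{adaptivity is invisible pointwise}: for each fixed evaluation point $x\in\R^d$, the adaptive windowed Fourier transform $\F_Q f(x,\Cdot)$ is literally an ordinary (constant-window) windowed Fourier transform whose window matrix happens to equal the single value $Q(x)$.

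First I would make this identification precise. Comparing the defining integrals \eqref{equ:windowedFourier} and \eqref{equ:adaptiveWindowedFourier}, the sole difference between the two transforms is that the constant matrix $Q$ is replaced by $Q(x)$ inside the Gaussian window $G_{Q^2(x)}(x-y)$. Since the integral runs over $y$ and its integrand never sees the values of $Q$ at any point other than $x$, we obtain, for every fixed $x$,
\[
\F_Q f(x,\xi) = \F_{Q(x)} f(x,\xi) \qquad (\xi\in\R^d),
\]
where on the right $Q(x)\in\gldr$ is regarded as a constant symmetric positive definite window matrix, i.e. the non-adaptive transform of Proposition \ref{prop:muFBI} with window width $Q(x)$.

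Next I would transport the moments through this identity. The density $\rho_x$ is built solely from $\F_Q f(x,\Cdot)$, so by the display above it coincides with the density of Proposition \ref{prop:muFBI} for the constant window $Q(x)$. Hence its expectation and covariance are given by that proposition with $Q$ replaced by $Q(x)$:
\[
\E_{\rho_x} = 0, \qquad
\Cov_{\rho_x} = \frac{Q(x)^{-2}}{2} + \frac{\left( \nabla f\nabla f^{\intercal} - f\, D^2 f\right) \ast G_{Q(x)^2}^2}{2\, f^2 \ast G_{Q(x)^2}^2}(x).
\]
Because $Q(x)$ is symmetric we have $Q(x)^2 = (Q^\intercal Q)(x)$ and $Q(x)^{-2} = (Q^\intercal Q)^{-1}(x)$, which is exactly the claimed formula. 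Finally I would record the two well-definedness checks: $Q(x)\in\gldr$ symmetric positive definite makes $G_{Q(x)^2}$ a genuine Gaussian, and applying Plancherel in the $y$-variable shows $\|\F_Q f(x,\Cdot)\|_{L^2}^2$ is a positive constant times $(f^2\ast G_{Q(x)^2}^2)(x) = \int f(y)^2\, G_{Q(x)^2}(x-y)^2\, \mathrm dy$, which is strictly positive for $f\not\equiv 0$ since the Gaussian is everywhere positive, so $\rho_x$ is a bona fide probability density.

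I expect no genuine analytic obstacle here; the entire content lies in the pointwise reduction, and the only point requiring care is the bookkeeping that the window matrix is \emph{frozen} at the value $Q(x)$ while the convolution variable in the formula still ranges over all of $\R^d$ and is evaluated at the same $x$. Should a self-contained argument be preferred over the citation, one may instead repeat the computation in the proof of Proposition \ref{prop:muFBI} verbatim, carrying the matrix $Q(x)$ in place of the constant $Q$ throughout; no step of that computation ever uses the constancy of $Q$ away from the evaluation point.
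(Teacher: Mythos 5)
Your proposal is correct and matches the paper's own (second) route: the paper's proof of Proposition \ref{prop:AdaptiveMuFBI} states precisely that it ``follows from Proposition \ref{prop:muFBI}'', and your pointwise-freezing argument $\F_Q f(x,\Cdot) = \F_{Q(x)} f(x,\Cdot)$, together with the identification $Q(x)^2 = (Q^\intercal Q)(x)$ for symmetric $Q(x)$, is exactly the bookkeeping needed to make that one-line reduction rigorous. Nothing further is required.
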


Setting $\mu_f(x) := \sqrt{\Cov_{\rho_x}}$ as before and choosing $Q(x) =
(\lambda \mu_f)^{-1}(x)$, $0<\lambda <\sqrt{2}$, as discussed in approach (A)
above, we arrive at an implicit formula for $\mu_f = \mu_f^{(d)}$ (without
loss of generality we assume $\mu_f$ to be symmetric and positive definite),
\begin{equation}
\label{equ:adaptiveFBImu}
\boxed{
\mu_f^2(x) = \frac{\lambda^2 \mu_f^2(x)}{2} + \frac{\left( \nabla
f\nabla f^{\intercal} - f\, D^2 f\right) \ast
G_{(\lambda\mu_f)^{-2}(x)}^2}{2\, f^2 \ast
G_{(\lambda\mu_f)^{-2}(x)}^2}(x)\, .}
\end{equation}
This formula can be simplified by combining the term on the left-hand side with
the first term on the right-hand side, but we prefer this form, because it
guarantees that the right-hand side is positive definite by Proposition
\ref{prop:AdaptiveMuFBI}, which is essential for the fixed point iteration
\eqref{equ:MuFBIFPI}.
\begin{remark}
Here and in the following we will assume that the function $f$ is such that
\eqref{equ:adaptiveFBImu} has a unique symmetric and positive definite solution
$\mu_f$ and that the corresponding fixed point iteration
\begin{equation}
\label{equ:MuFBIFPI}
\mu_f^{(n+1)}(x) = \sqrt{\frac{(\lambda \mu_f^{(n)})^2(x)}{2} + \frac{\left(
\nabla f\nabla f^{\intercal} - f\, D^2 f\right) \ast
G_{(\lambda\mu_f^{(n)})^{-2}(x)}^2}{2\, f^2 \ast
G_{(\lambda\mu_f^{(n)})^{-2}(x)}^2}(x)}
\end{equation}
converges to that solution for every symmetric and positive definite choice
$\mu_f^{(0)}$. The class of functions $f$ for which this is the case remains an
open problem.
\end{remark}
The crucial advantage of the choice \eqref{equ:adaptiveFBImu} is that it
fulfills the Adaptation Axioms \ref{cond:adaptation}:
\begin{theorem}
\label{theorem:AdaptationConditionsFulfilledAdaptiveMu}
Let $\mu_f^{(d)}$ be the solution of the implicit equation
\eqref{equ:adaptiveFBImu}. Then it fulfills the
Adaptation Axioms \ref{cond:adaptation}(A1)--(A4).
\end{theorem}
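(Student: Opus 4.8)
The plan is to verify each of the four axioms (A1)--(A4) directly from the implicit equation \eqref{equ:adaptiveFBImu}, by checking that the transformed adaptation function $\mu_{\tilde f}$ associated with the transformed function $\tilde f$ satisfies the same fixed-point equation that the axiom demands. Throughout I would exploit the structure of \eqref{equ:adaptiveFBImu}: abbreviating
\[
N_f(x) := \left(\nabla f\,\nabla f^{\intercal} - f\,D^2 f\right)(x),
\qquad
D_f(x) := f^2(x),
\]
the equation reads $\mu_f^2 = \tfrac{\lambda^2}{2}\mu_f^2 + \tfrac{1}{2}\,\bigl(N_f \ast G_{(\lambda\mu_f)^{-2}(x)}^2\bigr)(x) \big/ \bigl(D_f \ast G_{(\lambda\mu_f)^{-2}(x)}^2\bigr)(x)$. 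The strategy is to track how $N_f$, $D_f$, and the Gaussian convolution kernel transform under shifting, scaling, and scalar multiplication, and then to argue that the transformed expression is again a valid fixed-point equation whose solution matches the axiom's prescription. For (A1)--(A3) I would rely on a uniqueness assumption (already flagged in the Remark following \eqref{equ:adaptiveFBImu}): if $\tilde\mu$ solves the equation for $\tilde f$ and I can exhibit a candidate built from $\mu_f$ that also solves it, uniqueness forces them to coincide.

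For the shift axiom (A1), I would substitute $\tilde f = f(\Cdot - a)$ and observe that $N_{\tilde f}(x) = N_f(x-a)$ and $D_{\tilde f}(x) = D_f(x-a)$, since all the operators involved (gradient, Hessian, pointwise products) commute with translation. A change of variables $z \mapsto z-a$ in the convolution integrals then shows that $\mu_f(x-a)$ solves the implicit equation for $\tilde f$ at the point $x$, giving $\mu_{\tilde f}(x) = \mu_f(x-a)$. The scalar-multiplication axiom (A2) is the simplest: replacing $f$ by $\alpha f$ scales both $N_f$ and $D_f$ by $\alpha^2$, so the ratio in \eqref{equ:adaptiveFBImu} is unchanged, and the equation for $\alpha f$ is identical to that for $f$; hence $\mu_{\alpha f} = \mu_f$. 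The scaling axiom (A3) is the most delicate of the three: for $\tilde f = f(A\cdot\Cdot)$ the chain rule gives $\nabla\tilde f(x) = A^{\intercal}(\nabla f)(Ax)$ and $D^2\tilde f(x) = A^{\intercal}(D^2 f)(Ax)A$, so $N_{\tilde f}(x) = A^{\intercal} N_f(Ax) A$ while $D_{\tilde f}(x) = D_f(Ax)$. I would then perform the substitution $z = Ay$ inside the Gaussian convolutions, using the scaling identity $G_\Sigma(Ax) \propto G_{A^{-1}\Sigma A^{-\intercal}}(x)$ together with the Jacobian factor $|\det A|$, and verify that the quantity $A^{\intercal}\mu_f^2(Ax)A$ satisfies the transformed equation. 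Since (A3) is phrased in terms of $\mu^{\intercal}\mu$ rather than $\mu$ itself (because products of square roots need not be symmetric, as the Remark explains), the conjugation structure $A^{\intercal}(\cdot)A$ matches exactly what is required.

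The hardest part will be the locality axiom (A4), since it is an asymptotic statement ($t\to\infty$) rather than an exact algebraic identity, so uniqueness of the fixed point cannot be invoked directly. Here I would argue that as the separations $\|a_k^{(t)} - a_j^{(t)}\|$ tend to infinity, the Gaussian $G_{(\lambda\mu_f)^{-2}(x)}^2$ appearing in the convolutions decays fast enough that the contributions to $(N_{f^{(t)}} \ast G^2)(x + a_k^{(t)})$ and $(D_{f^{(t)}} \ast G^2)(x + a_k^{(t)})$ from the distant components $f_j$, $j\neq k$, become negligible. One subtlety is that the window width itself depends on $\mu_{f^{(t)}}$, which is being solved for, so I would need a uniform control: the widths must stay bounded (or at least not grow so fast as to let the far-away bumps leak back in), which I expect follows from the boundedness of $N_{f_k}$ and $D_{f_k}$ for each fixed component together with a continuity/stability argument for the fixed-point map. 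Assuming such uniform decay, the equation for $f^{(t)}$ localized near $a_k^{(t)}$ converges to the equation for the single component $f_k$, and by continuity of the solution map the fixed point $\mu_{f^{(t)}}(x + a_k^{(t)})$ converges to $\mu_{f_k}(x)$. Establishing this convergence rigorously --- in particular ruling out pathological feedback between the unknown width and the tail contributions --- is where the real work lies, and I would likely need to invoke the uniform integrability assumptions on $f_k \in W^{2,2}$ and the radial-symmetry/decay of the Gaussian kernel to close the argument.
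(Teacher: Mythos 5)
Your proposal follows essentially the same route as the paper's own proof: for (A1)--(A3) you exhibit a candidate (shifted, identical, or $A^{\intercal}\mu_f^2(A\,\Cdot)A$-conjugated) solution of the transformed fixed-point equation and invoke the standing uniqueness assumption, using exactly the transformation rules for $\nabla f\nabla f^{\intercal}-f\,D^2f$, $f^2$, and the Gaussian kernel that the paper records; for (A4) you, like the paper, show that the right-hand side $R[f^{(t)}(\Cdot+a_k^{(t)}),\mu]$ converges to $R[f_k,\mu]$ and pass this to the fixed points via (A1). If anything, you are more candid than the paper about the remaining gap in (A4) --- the paper simply asserts that convergence of the right-hand side (for fixed $\mu$) implies convergence of the solutions, which is precisely the continuity-of-the-solution-map step you flag as requiring real work.
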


Now let us discuss approach (B), which suggests to replace the windowed Fourier
transform in Proposition \ref{prop:muFBI} by the modulus squared $\abs{Wf}^2$ of
the Wigner transform $Wf$.
In contrast to the windowed Fourier transform, a formula analogous to
\eqref{equ:scaleFourier} holds for the Wigner transform $Wf$ and thereby for
$|Wf|^2$, which is a promising property for scale invariance (again, $\tilde
f(x) := f(\alpha x)$ for some $\alpha \neq 0$):
\begin{align*}
W\tilde f(x,\xi)
&=
(2\pi)^{-d} \int_{\R^d}\overline{f\left(\alpha x+\frac{\alpha
y}{2}\right)}\, f\left(\alpha x-\frac{\alpha y}{2}\right)\, e^{iy^\intercal\xi}\, \mathrm dy
\\
&=
\frac{(2\pi)^{-d}}{\alpha^d} \int_{\R^d}\overline{f\left(\alpha
x+\frac{y}{2}\right)}\, f\left(\alpha x-\frac{y}{2}\right)\,
e^{iy^\intercal\xi/\alpha}\, \mathrm dy
\\
&=
\alpha^{-d}\, W f(\alpha x,\xi/\alpha),
\\[0.3cm]
\implies\ |W\tilde f|^2(x,\xi)
&=
\alpha^{-2d}\, |Wf|^2(\alpha x,\xi/\alpha).
\end{align*}
As before, let us compute the expectation value and covariance of the corresponding probability density in $\xi$:
\begin{proposition}
\label{prop:muWigner2}
Let $f\in W^{2,2}(\R^d,\R)\setminus\{0\}$. Then, for each $x\in\R^d$, the
expectation value and covariance matrix of the probability distribution $\P_{\rho_x}$ given
by the density
\[
\rho_x(\xi) = \frac{|Wf|^2(x,\xi)}{\|W f (x,\Cdot)\|_{L^2}^2}
\]
are:
\[
\E_{\rho_x} = 0
\qquad\text{and}\qquad
\Cov_{\rho_x} = \frac{\left(f^2\right)\ast\left(\nabla f\, \nabla f^{\intercal}
- f\, D^2 f\right)}{4 \left(f^2\right)\ast\left(f^2\right)}\, (2x).
\]
\end{proposition}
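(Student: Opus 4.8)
Throughout I fix $x$ and abbreviate $h_x(y):=f\!\left(x+\tfrac{y}{2}\right)f\!\left(x-\tfrac{y}{2}\right)$, so that by definition $Wf(x,\xi)=(2\pi)^{-d}\int_{\R^d}h_x(y)\,e^{iy^\intercal\xi}\,\mathrm dy=(2\pi)^{-d/2}\,\F^{-1}h_x(\xi)$. Since $f$ is real and $h_x$ is even in $y$, the substitution $y\mapsto -y$ (the same one used in the footnote to the definition) shows that $Wf(x,\Cdot)$ is real-valued and even in $\xi$. Hence $|Wf|^2=(Wf)^2$, and the mean is immediate: $\E_{\rho_x}=\int_{\R^d}\xi\,\rho_x(\xi)\,\mathrm d\xi=0$ because the integrand is odd. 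It therefore remains only to compute the normalizing constant $\|Wf(x,\Cdot)\|_{L^2}^2$ and the matrix of second moments $\int_{\R^d}\xi_j\xi_k\,(Wf)^2(x,\xi)\,\mathrm d\xi$.

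For the normalization I would apply the Plancherel theorem (Proposition \ref{prop:fourierInverse}) to $\F^{-1}$, giving $\|Wf(x,\Cdot)\|_{L^2}^2=(2\pi)^{-d}\|h_x\|_{L^2}^2=(2\pi)^{-d}\int_{\R^d}f(x+\tfrac{y}{2})^2\,f(x-\tfrac{y}{2})^2\,\mathrm dy$. Substituting $z=y/2$ and recognizing the result as a convolution of $f^2$ with itself evaluated at $2x$ yields $\|Wf(x,\Cdot)\|_{L^2}^2=2^d(2\pi)^{-d}\,(f^2\ast f^2)(2x)$, which is exactly the denominator up to a prefactor $2^d(2\pi)^{-d}$ that will cancel later. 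The same expression also follows from the $\delta$-distribution identity $\int_{\R^d} e^{i(y+y')^\intercal\xi}\,\mathrm d\xi=(2\pi)^d\delta(y+y')$ advertised before Proposition \ref{prop:husimi}, applied to the product representation of $(Wf)^2$.

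For the second moments I would use that multiplication by $\xi_j$ on the Fourier side is differentiation on the physical side, $\xi_j\,Wf(x,\Cdot)=(2\pi)^{-d/2}\,\F^{-1}\!\left[\,i\,\partial_{y_j}h_x\,\right]$, so that Plancherel turns the $\xi$-integral into $\int_{\R^d}\xi_j\xi_k\,(Wf)^2\,\mathrm d\xi=(2\pi)^{-d}\int_{\R^d}\partial_{y_j}h_x\,\partial_{y_k}h_x\,\mathrm dy$. With $u=x+\tfrac{y}{2}$, $v=x-\tfrac{y}{2}$ the product rule gives $\partial_{y_j}h_x=\tfrac12\big[(\partial_jf)(u)f(v)-f(u)(\partial_jf)(v)\big]$, and after the substitution $z=y/2$ the integrand expands into two diagonal terms, already of the form $f(u)^2(\partial_jf)(v)(\partial_kf)(v)$ and its $u\leftrightarrow v$ mirror, plus two cross terms of the shape $\big[f(u)(\partial_jf)(u)\big]\big[f(v)(\partial_kf)(v)\big]$. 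The decisive manipulation is to rewrite the cross terms as $\tfrac12\partial_{z_j}\!\big(f(u)^2\big)\cdot\big(-\tfrac12\big)\partial_{z_k}\!\big(f(v)^2\big)$ and integrate by parts once in $z$; expanding $\partial_{z_j}\partial_{z_k}\!\big(f(v)^2\big)$ then produces precisely $-f(u)^2 f(v)(\partial_j\partial_k f)(v)$ together with a term cancelling one copy of the diagonal contribution. Collecting everything gives $\int_{\R^d}\xi_j\xi_k\,(Wf)^2\,\mathrm d\xi=2^{d-2}(2\pi)^{-d}\,\big[(f^2)\ast(\nabla f\nabla f^\intercal-f\,D^2f)\big]_{jk}(2x)$.

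Dividing the second-moment matrix by the normalizing constant, the common factor $2^d(2\pi)^{-d}$ cancels and the ratio $2^{d-2}/2^d=\tfrac14$ produces exactly $\Cov_{\rho_x}=\big[(f^2)\ast(\nabla f\nabla f^\intercal-f\,D^2 f)\big]\big/\big(4\,(f^2)\ast(f^2)\big)(2x)$, as claimed. The main obstacle is concentrated in the last display of the preceding paragraph: carrying the chain-rule factors $\tfrac12$ and the signs correctly through the single integration by parts, and verifying that the boundary terms vanish. That vanishing, together with the applicability of Plancherel to $h_x$ and the absolute convergence that legitimizes $\E_{\rho_x}=0$, is where the hypothesis $f\in W^{2,2}(\R^d,\R)$ is used; I would secure it by first establishing the identity for $f\in\mathcal S(\R^d,\C)$ and then passing to the limit via density of $\mathcal S(\R^d)$ in $W^{2,2}(\R^d)$ and continuity of the convolutions $f^2\ast f^2$ and $(f^2)\ast(\nabla f\nabla f^\intercal-f\,D^2 f)$.
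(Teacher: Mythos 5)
Your proof is correct, and it reaches exactly the paper's own intermediate quantities --- the normalization $2^d(2\pi)^{-d}\,(f^2\ast f^2)(2x)$ and the second-moment matrix $2^{d-2}(2\pi)^{-d}\,\big[(f^2)\ast(\nabla f\,\nabla f^\intercal - f\,D^2 f)\big](2x)$ --- but by a different mechanism. The paper writes $|Wf|^2(x,\xi)$ as a double integral over $(y_1,y_2)$, changes variables to $z_1=y_1-y_2$, $z_2=y_1+y_2$, and uses the formal identity $(2\pi)^{-d}\int e^{iz_1^\intercal\xi}\,\mathrm d\xi=\delta(z_1)$ announced before Proposition \ref{prop:husimi}: the zeroth moment collapses to $\int F(0,z_2)\,\mathrm dz_2$ and the second moments to $-\int D^2_{z_1}F(0,z_2)\,\mathrm dz_2$, where $F$ is the four-fold product of values of $f$; in that organization the cross terms cancel algebraically inside $D^2_{z_1}F(0,\Cdot)$, so no integration by parts is ever performed. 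You instead recognize $Wf(x,\Cdot)=(2\pi)^{-d/2}\,\F^{-1}h_x$ with $h_x(y)=f\big(x+\tfrac{y}{2}\big)f\big(x-\tfrac{y}{2}\big)$ and invoke Plancherel (Proposition \ref{prop:fourierInverse}) together with the multiplication--differentiation correspondence, turning the second moments into $(2\pi)^{-d}\int\partial_{y_j}h_x\,\partial_{y_k}h_x\,\mathrm dy$; the price is the single extra integration by parts needed to produce the $-f\,D^2f$ term (plus the boundary-term check), which is precisely where you locate the difficulty --- and your sign and factor bookkeeping there does come out right, as the final constants $2^{d-2}/2^d=\tfrac14$ confirm. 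What your route buys: it is rigorous from the outset (Plancherel is a theorem, no distributional manipulation), and your closing density argument ($\mathcal S(\R^d)$ dense in $W^{2,2}$, continuity of the limiting convolutions) pins down the role of the hypothesis $f\in W^{2,2}$ more carefully than the paper does. What the paper's route buys: the delta-distribution technique applies verbatim to Propositions \ref{prop:muFourier}, \ref{prop:muFBI} and \ref{prop:AdaptiveMuFBI} (whose proofs the paper declares analogous), and the cross-term cancellation is automatic, so the step you flag as the main obstacle never arises.
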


This time, if we choose 
\begin{equation}
\label{equ:finalChoiceMu}
\boxed{
\mu_f^{(e)}(x) := \sqrt{\Cov_{\rho_x}} = \sqrt{\frac{\left(f^2\right)\ast\left(\nabla f\, \nabla f^{\intercal}
- f\, D^2 f\right)}{4 \left(f^2\right)\ast\left(f^2\right)}\, (2x)}\, ,
}
\end{equation}
the Adaptation Axioms \ref{cond:adaptation}(A1)--(A3) are fulfilled, as stated
by the following proposition, but not (A4), as demonstrated by Example
\ref{example:threeGaussians} .
\begin{theorem}
\label{theorem:AdaptationConditionsFulfilled}
$\mu_f^{(e)}$ as defined by \eqref{equ:finalChoiceMu} fulfills the
Adaptation Axioms \ref{cond:adaptation}(A1)--(A3).
\end{theorem}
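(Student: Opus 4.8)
The plan is to reduce all three axioms to the corresponding transformation laws for the covariance matrix $\Cov_{\rho_x}$ supplied by Proposition \ref{prop:muWigner2}. Abbreviate
\[
C_f(x) := \Cov_{\rho_x} = \frac{\left(f^2\right)\ast\left(\nabla f\, \nabla f^{\intercal} - f\, D^2 f\right)}{4 \left(f^2\right)\ast\left(f^2\right)}(2x),
\]
so that by definition $\mu_f^{(e)}(x) = \sqrt{C_f(x)}$. Since $C_f(x)$ is a genuine covariance matrix, it is symmetric and positive definite, hence so is its square root, and therefore $\left(\mu_f^{(e)}\right)^{\intercal}(x)\,\mu_f^{(e)}(x) = \mu_f^{(e)}(x)^2 = C_f(x)$. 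This identity reduces (A3) to a statement purely about $C_f$, and shows that (A1) and (A2) will follow at once from the corresponding behaviour of $C_f$.

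For (A2) I would simply note that replacing $f$ by $\alpha f$ scales both $\nabla f\, \nabla f^{\intercal} - f\, D^2 f$ and $f^2$ by $\alpha^2$; the numerator and denominator of $C_f$ therefore each acquire a factor $\alpha^4$, which cancels, giving $C_{\alpha f} = C_f$ and hence $\mu_{\alpha f}^{(e)} = \mu_f^{(e)}$. For (A1) I would use that $\nabla$, $D^2$ and squaring all commute with translation, so with $\tilde f = f(\cdot - a)$ the two integrands are the $a$-translates of the originals. Because the convolution of two $a$-translated functions equals the $2a$-translate of the convolution, both numerator and denominator of $C_{\tilde f}(x)$ are evaluated at $2x - 2a = 2(x-a)$, whence $C_{\tilde f}(x) = C_f(x-a)$ and (A1) follows.

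The heart of the argument is (A3). With $\tilde f(y) = f(Ay)$ the chain rule gives $\nabla \tilde f(y) = A^{\intercal}(\nabla f)(Ay)$ and $D^2 \tilde f(y) = A^{\intercal}(D^2 f)(Ay)\,A$, so that
\[
\left(\nabla \tilde f\, \nabla \tilde f^{\intercal} - \tilde f\, D^2 \tilde f\right)(y) = A^{\intercal}\left(\nabla f\, \nabla f^{\intercal} - f\, D^2 f\right)(Ay)\, A,
\qquad
\tilde f^2(y) = f^2(Ay).
\]
Substituting these into the two convolutions and applying the change of variables $w = Ay$ (which contributes a Jacobian $|\det A|^{-1}$ to each integral) yields $|\det A|^{-1} A^{\intercal}\big[(f^2)\ast(\nabla f\,\nabla f^{\intercal} - f\,D^2 f)\big](2Ax)\,A$ in the numerator and $|\det A|^{-1}\big[(f^2)\ast(f^2)\big](2Ax)$ in the denominator. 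The Jacobian factors cancel, leaving $C_{\tilde f}(x) = A^{\intercal}\, C_f(Ax)\, A$. Combined with the identity $\left(\mu^{(e)}\right)^{\intercal}\mu^{(e)} = C$ from the first paragraph, this is precisely axiom (A3).

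I expect the bookkeeping in (A3) to be the only real obstacle: one must track the two transposes produced by the Hessian's $A^{\intercal}(\cdots)A$ sandwich, check that the evaluation point $2x$ transforms consistently to $2Ax$ under the substitution, and verify that the Jacobian factors in numerator and denominator match and cancel. This is also exactly why the axiom is phrased in the quadratic-form version $\mu^{\intercal}\mu$ rather than for $\mu$ itself: the relation $C_{\tilde f}(x) = A^{\intercal} C_f(Ax)\, A$ is manifestly symmetric and positive definite, but $\sqrt{A^{\intercal} C_f(Ax)\, A} \neq A^{\intercal}\sqrt{C_f(Ax)}\,A$ in general, so no clean transformation law holds for the square root $\mu_f^{(e)}$ directly.
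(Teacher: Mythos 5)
Your proposal is correct and follows essentially the same route as the paper: both arguments reduce the axioms to the transformation laws of the quotient $C_f(x) = \bigl[(f^2)\ast(\nabla f\,\nabla f^{\intercal}-f\,D^2f)\bigr]\big/\bigl[4\,(f^2)\ast(f^2)\bigr](2x)$, using translation covariance of convolutions for (A1), the cancellation of the $\alpha^4$ factors for (A2), and the chain rule together with the identity $\left(\phi(A\cdot\Cdot)\ast\psi(A\cdot\Cdot)\right)(x)=\abs{\det A}^{-1}(\phi\ast\psi)(Ax)$ for (A3). Your explicit bookkeeping of the evaluation point ($2x\mapsto 2(x-a)$, resp.\ $2Ax$) and your closing remark on why (A3) must be stated for $\mu^{\intercal}\mu$ rather than $\mu$ itself match the paper's computation and its surrounding remark exactly.
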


We will now state one more result, which suggests a possibility to calibrate the original width of the smoothing kernel $g$:
\begin{corollary}
\label{cor:gaugeWidth2}
If $f = G[a,\Sigma]$ is a Gaussian density given by \eqref{equ:generalGaussian},
then we have
\[
\mu_f^{(d)} \equiv \tfrac{1}{\sqrt{2-\lambda^2}}\, \Sigma^{-1/2},
\qquad
\mu_f^{(e)} \equiv \tfrac{1}{2}\, \Sigma^{-1/2}.
\]
\end{corollary}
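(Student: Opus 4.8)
The plan is to reduce both identities to a single algebraic cancellation for Gaussians and then to exploit the homogeneity of the two defining formulas. By shift invariance---Adaptation Axiom \ref{cond:adaptation} (A1), which holds for $\mu_f^{(d)}$ and $\mu_f^{(e)}$ by Theorems \ref{theorem:AdaptationConditionsFulfilledAdaptiveMu} and \ref{theorem:AdaptationConditionsFulfilled}---it suffices to treat the centered case $a=0$, i.e.\ $f=G_\Sigma$: if the resulting adaptation function turns out to be constant in $x$, the shift merely translates a constant and the identity persists for every $a$. First I would compute the building blocks from $\log f = \mathrm{const}-\tfrac12 x^\intercal\Sigma^{-1}x$, namely $\nabla f = -f\,\Sigma^{-1}x$ and $D^2 f = f\,(\Sigma^{-1}xx^\intercal\Sigma^{-1}-\Sigma^{-1})$. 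Substituting these into the numerator common to all formulas gives the key cancellation
\[
\nabla f\,\nabla f^\intercal - f\,D^2 f = f^2\,\Sigma^{-1}xx^\intercal\Sigma^{-1} - f^2\big(\Sigma^{-1}xx^\intercal\Sigma^{-1}-\Sigma^{-1}\big) = f^2\,\Sigma^{-1}.
\]
This is the crux: the matrix-valued quantity appearing everywhere collapses to the scalar function $f^2$ times the \emph{constant} matrix $\Sigma^{-1}$.

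For $\mu_f^{(e)}$ the conclusion is then immediate. Since $\Sigma^{-1}$ is constant it factors out of the convolution in \eqref{equ:finalChoiceMu}, so $(f^2)\ast(\nabla f\,\nabla f^\intercal - f\,D^2 f) = \Sigma^{-1}\big[(f^2)\ast(f^2)\big]$; the scalar convolution $(f^2)\ast(f^2)$ cancels against the denominator, leaving $\Cov_{\rho_x}=\tfrac14\Sigma^{-1}$ independently of $x$, whence $\mu_f^{(e)} = \sqrt{\tfrac14\Sigma^{-1}} = \tfrac12\Sigma^{-1/2}$ by the square-root convention.

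For $\mu_f^{(d)}$ I would verify the constant ansatz $\mu_f = c\,\Sigma^{-1/2}$ with $c>0$ and then appeal to the assumed uniqueness of the solution of \eqref{equ:adaptiveFBImu}. Under this ansatz the convolution width $(\lambda\mu_f)^{-2} = \lambda^{-2}c^{-2}\Sigma$ is itself constant, so the same factoring-and-cancellation argument turns the fraction in \eqref{equ:adaptiveFBImu} into $\tfrac12\Sigma^{-1}$, again independent of $x$. The implicit equation thereby reduces to the scalar relation $c^2 = \tfrac{\lambda^2}{2}c^2 + \tfrac12$, whose unique positive root is $c = (2-\lambda^2)^{-1/2}$, well defined precisely because $0<\lambda<\sqrt2$. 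As $c\,\Sigma^{-1/2}$ is symmetric positive definite and solves \eqref{equ:adaptiveFBImu}, uniqueness forces $\mu_f^{(d)}\equiv(2-\lambda^2)^{-1/2}\Sigma^{-1/2}$.

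The only genuinely delicate point is logical rather than computational: for $\mu_f^{(d)}$ one cannot derive the value directly, since \eqref{equ:adaptiveFBImu} is implicit, so the argument instead checks the self-consistency of the constant ansatz---in particular that a constant $\mu_f$ keeps the convolution width constant, which is exactly what makes the $x$-dependence drop out---and then borrows the standing uniqueness assumption from the remark following \eqref{equ:adaptiveFBImu}.
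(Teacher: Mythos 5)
Your proposal is correct and follows essentially the same route as the paper: the crux in both is the pointwise identity $\nabla f\,\nabla f^{\intercal} - f\,D^2 f = f^2\,\Sigma^{-1}$ for the Gaussian, after which the constant matrix $\Sigma^{-1}$ factors out of all convolutions and the claim drops out of \eqref{equ:adaptiveFBImu} and \eqref{equ:finalChoiceMu}. The paper compresses everything after this identity into ``the claim follows from the definitions,'' whereas you spell out the cancellation for $\mu_f^{(e)}$ and the constant ansatz plus the standing uniqueness assumption for the implicit equation defining $\mu_f^{(d)}$ (and optionally reduce to $a=0$ via (A1), which the paper avoids by computing with general $a$ directly); these are fillings-in of the same argument, not a different one.
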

Therefore, in order to gauge the overall extent of the smoothing process,
Gaussian functions $f$ (or just the standard Gaussian) are well suited for the
calibration of the original width of the smoothing kernel $g$.

\subsection{Examples}

\begin{example}
The application of \eqref{equ:FBIChoiceMu}, \eqref{equ:adaptiveFBImu} and \eqref{equ:finalChoiceMu} to the function $f(x) = f_1(x) + f_2(x-a)$ from Example \ref{example:differingVariation} yields the results presented in Figure \ref{fig:convolution6und7und8}. The scale invariance (Adaptation Axiom \ref{cond:adaptation}(A3)) of the choices \eqref{equ:adaptiveFBImu} and \eqref{equ:finalChoiceMu} is clearly visible: $f_2(x) = f_1(\alpha x)$ and, accordingly, $\mu_f$ is $\alpha=6$ times higher in the `right' domain than in the `left' one.

\begin{figure}[H]
        \centering
        \begin{subfigure}[b]{0.8\textwidth}
                \centering
                \includegraphics[width=1\textwidth]{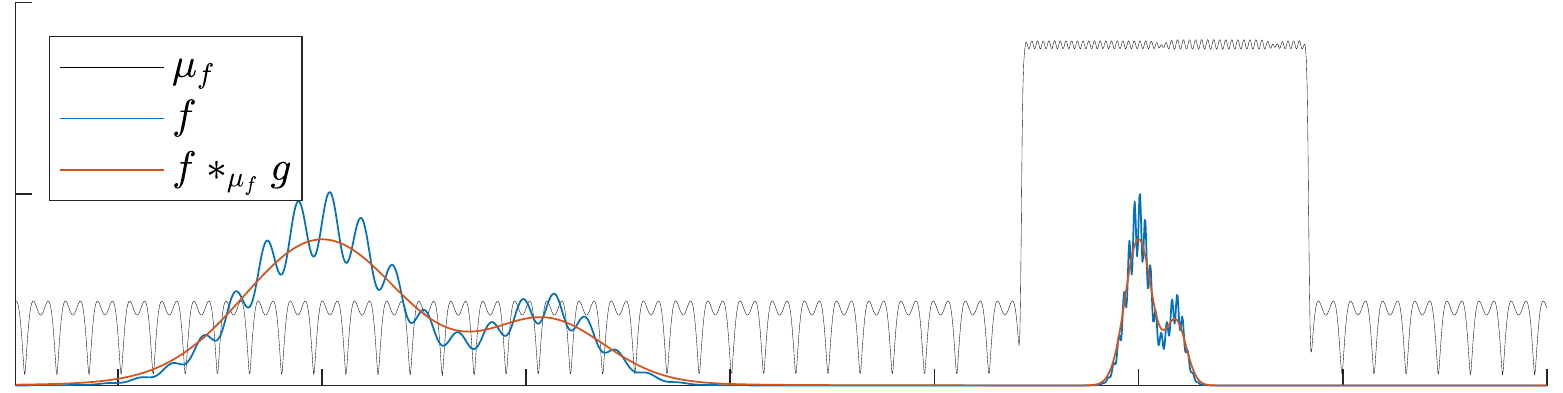}                
	   \end{subfigure}%
       \vspace{0.2cm}
       \begin{subfigure}[b]{0.8\textwidth}
               \centering
               \includegraphics[width=1\textwidth]{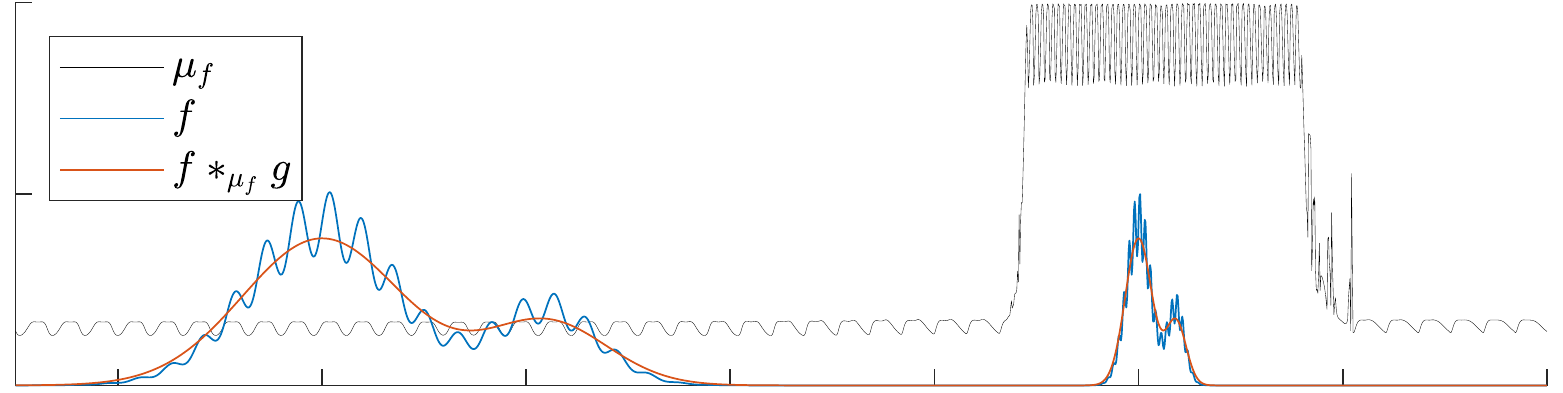}
       \end{subfigure}
        \vspace{0.2cm}
        \begin{subfigure}[b]{0.8\textwidth}
                \centering
                \includegraphics[width=1\textwidth]{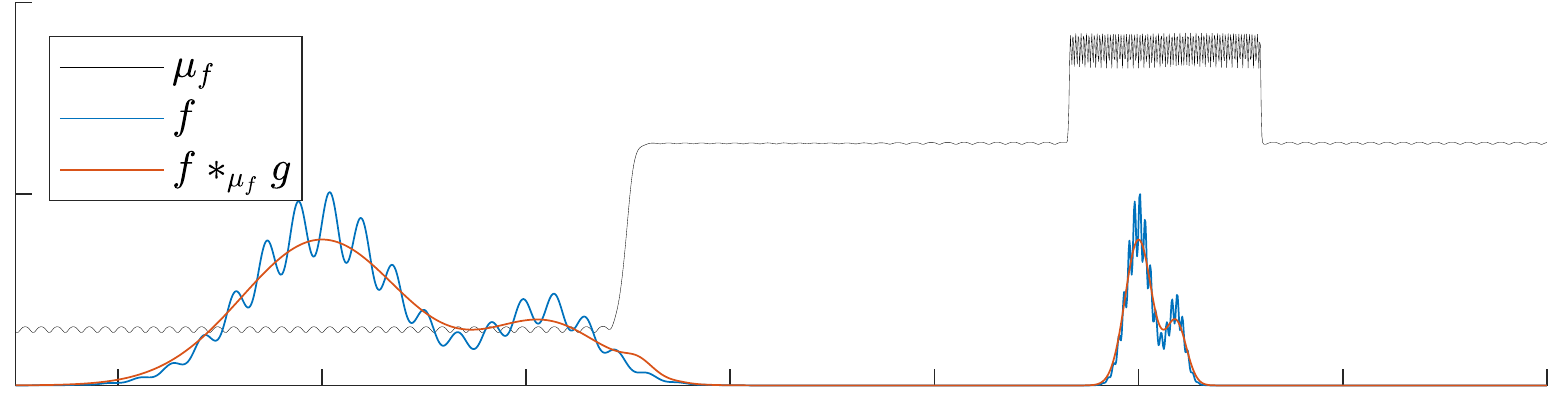}
        \end{subfigure}
        \caption{$\mu_f^{(c)}$, $\mu_f^{(d)}$ and $\mu_f^{(e)}$ as given by the
        formulas \eqref{equ:FBIChoiceMu}, \eqref{equ:adaptiveFBImu} and
        \eqref{equ:finalChoiceMu} describe the local variation of $f$. Choosing them as adaptation functions yields proper local scaling of $g$ and thereby an adequate smoothing of $f$
        everywhere (the width $\sigma$ of the Gaussian kernel $g$ as well as
        $Q$ in \eqref{equ:FBIChoiceMu} and $\lambda$ in
        \eqref{equ:adaptiveFBImu} were chosen manually).}
        \label{fig:convolution6und7und8}
\end{figure}

%

\end{example}

\begin{example}
\label{example:banana}
We also present a 2-dimensional example, where $f$ is chosen as the following
highly curved density (a strongly deformed Gaussian):
\begin{equation}
\label{equ:bananaDensity}
f(x)
=
\frac{1}{2\pi \sigma}\, 
\exp\left(-\frac{1}{2} \left[\Big(\frac{x_1}{\sigma}\Big)^2 + \bigg(x_2 - \alpha\Big(\frac{x_1}{\sigma}\Big)^2\bigg)^2\right]  \right),
\qquad
\alpha = 4,\ \sigma = 5.
\end{equation}
\begin{figure}[H]
        \centering
        \begin{subfigure}[b]{0.24\textwidth}
            \centering
			\includegraphics[width=\textwidth]{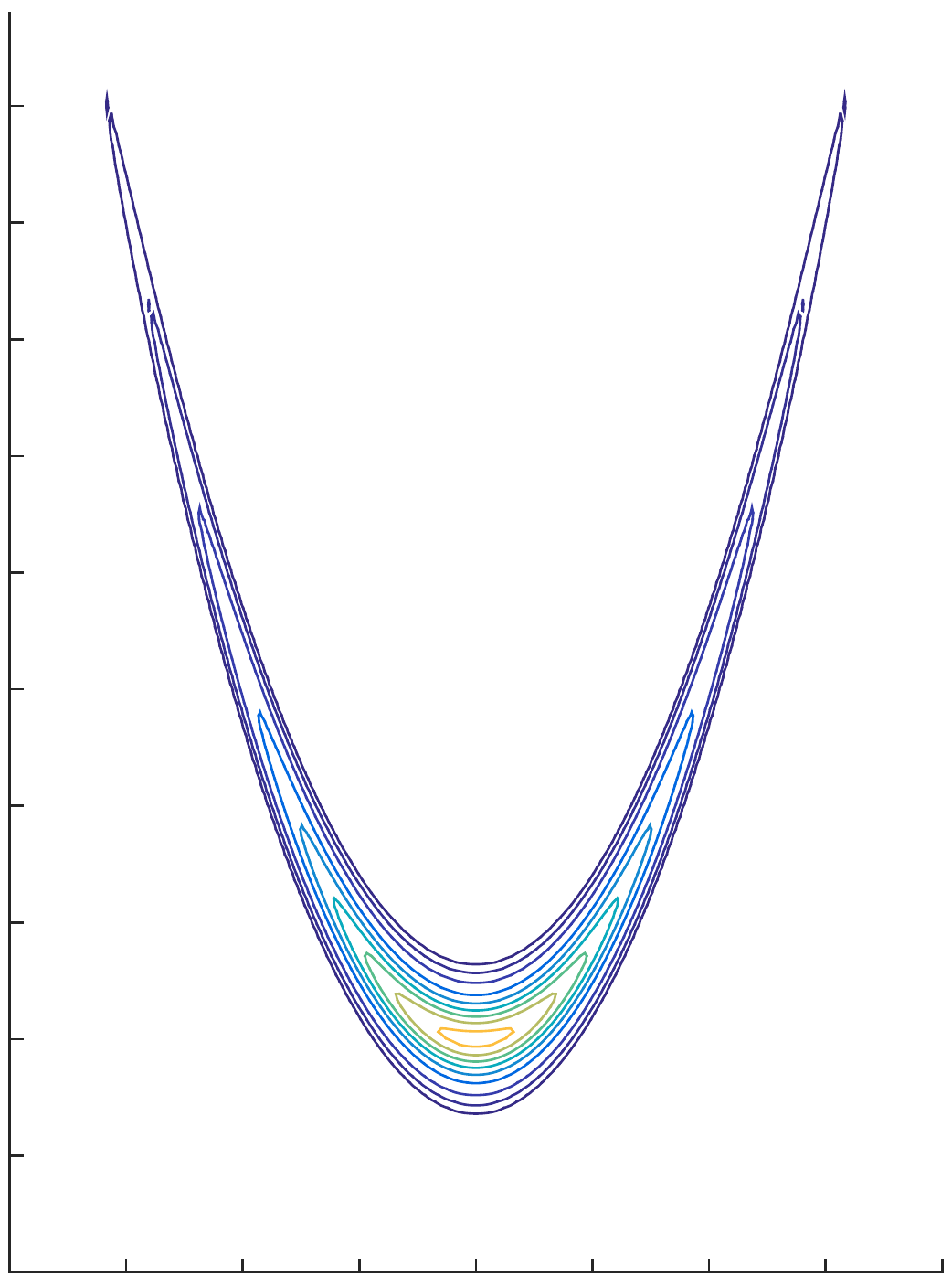}
            \caption{$f$ given by \eqref{equ:bananaDensity}}
        \end{subfigure}
        \hfill
        \begin{subfigure}[b]{0.24\textwidth}
            \centering
			\includegraphics[width=\textwidth]{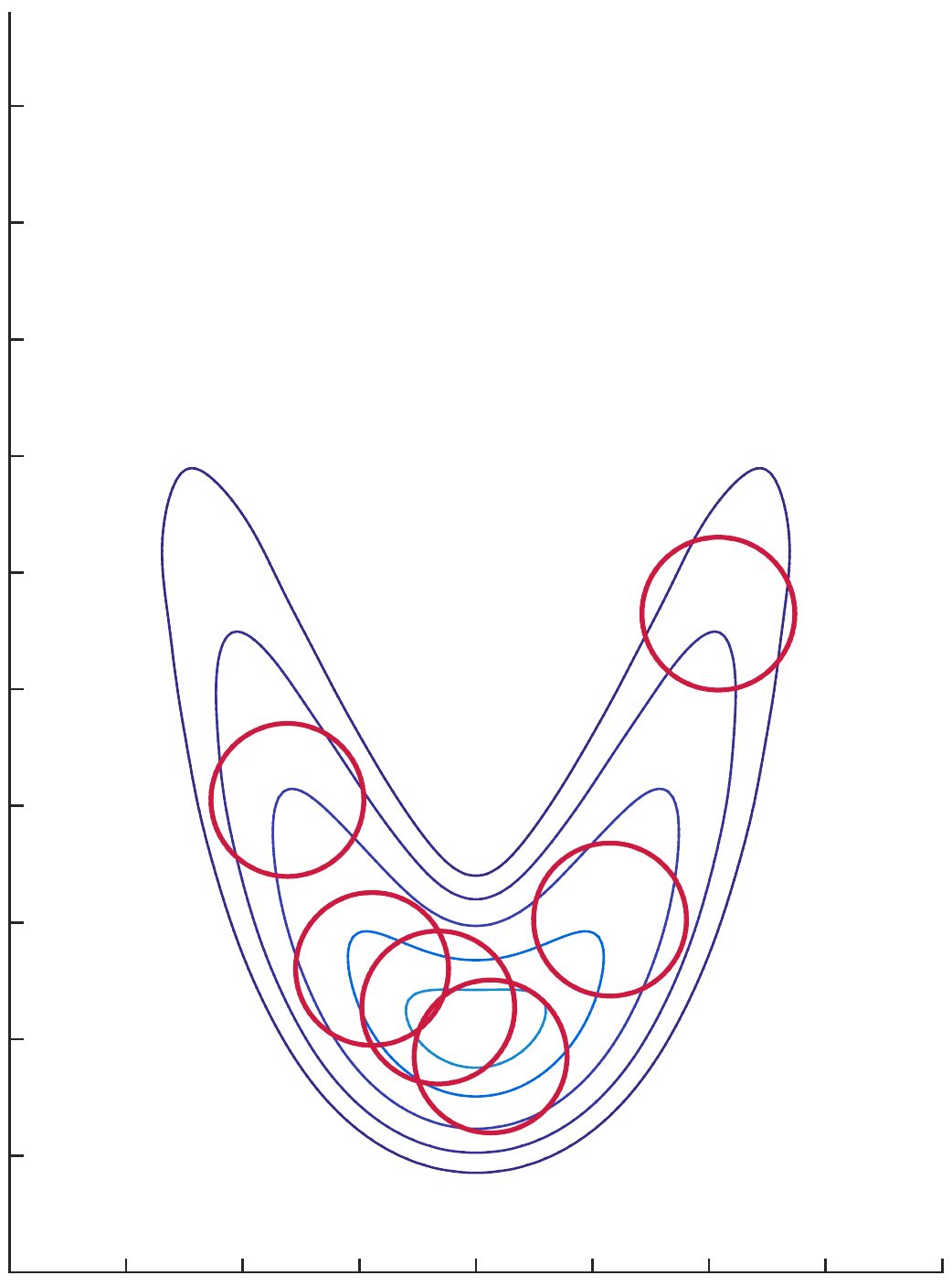}
			\caption{$f\ast g$}
		\end{subfigure}
		\hfill
        \begin{subfigure}[b]{0.24\textwidth}
            \centering
			\includegraphics[width=\textwidth]{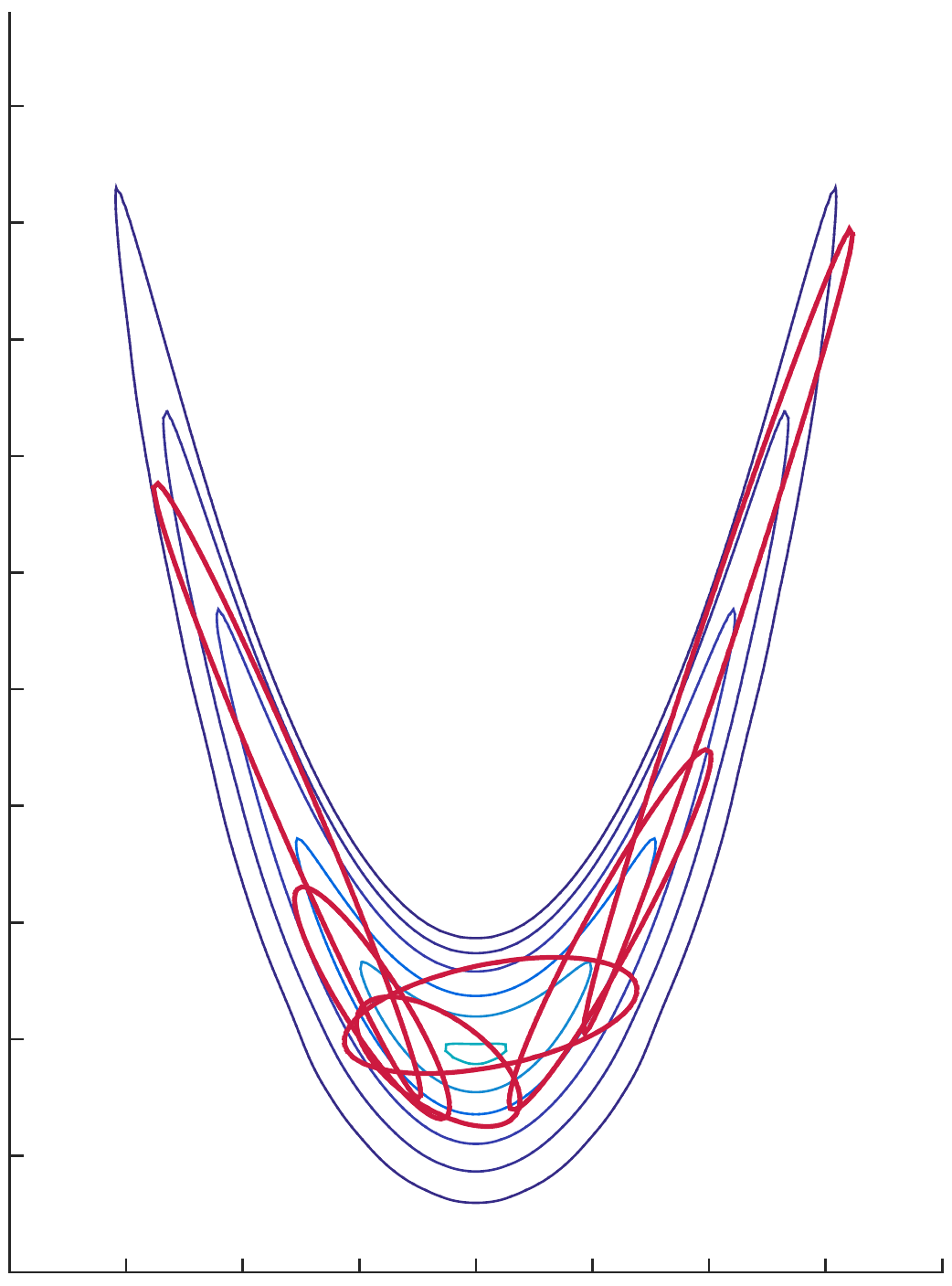}
            \caption{$f\ast_{\mu_f} g$ using \eqref{equ:adaptiveFBImu}}
        \end{subfigure}
        \hfill
        \begin{subfigure}[b]{0.24\textwidth}
            \centering
			\includegraphics[width=\textwidth]{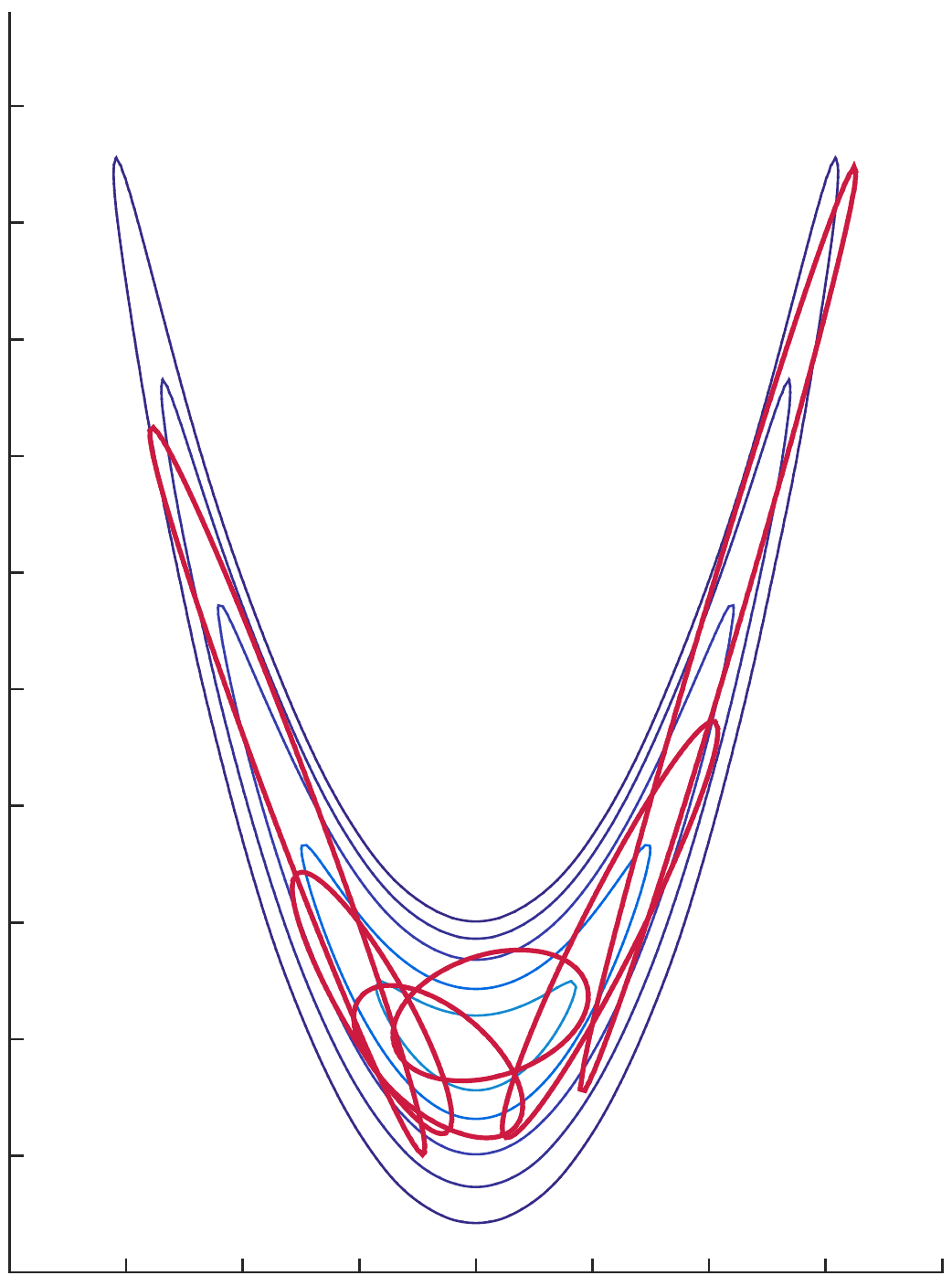}
			\caption{$f\ast_{\mu_f} g$ using \eqref{equ:finalChoiceMu}}
		\end{subfigure}
        \caption{The curved density \eqref{equ:bananaDensity} smoothed by
        standard convolution and adaptive convolution using rules
        \eqref{equ:adaptiveFBImu} and \eqref{equ:finalChoiceMu} (the width $\sigma$ of the Gaussian kernel $g$ as well as $\lambda$ in \eqref{equ:adaptiveFBImu} were chosen manually). It is evident how the last two adapt to the local behavior of $f$. The red ellipses show 80\%
        contours of the kernels $g(\Cdot-y)$ in (b) and the (stretched) kernels
        $\lvert\det\mu_f(y)\rvert\, g\big(\mu_f(y)(\Cdot-y)\big)$ in (c) and
        (d) for several centers $y$.
        }
        \label{fig:banana_density}
\end{figure}
\end{example}

Our last example demonstrates, how the Adaptation Axiom
\ref{cond:adaptation}(A4) is fulfilled by \eqref{equ:adaptiveFBImu} but
violated by \eqref{equ:finalChoiceMu}. The latter fails to capture solely
local properties of $f$ because it makes use of the convolution of $f^2$ with
itself (and with its derivatives).

\begin{example}
\label{example:threeGaussians}
Consider the function
\[
f(x) = f_1(x) + f_2(x-ta) + f_3(x+ta),
\]
where $f_1,f_2,f_3$ are three functions `located at the origin', $t>0$ and
$a\in\R^d\setminus \{0\}$. For $t\to\infty$ the three parts will `drift apart'.
However, the convolution of $f_2(\Cdot-ta)$ and $f_3(\Cdot+ta)$ (and of their
derivatives) remain unchanged as $t$ grows and, since \eqref{equ:finalChoiceMu}
depends on these convolutions, $f_2$ and $f_3$ will have an influence on the
smoothing of $f_1$ no matter how large $t$ becomes.
\begin{figure}[H]
        \centering
        \begin{subfigure}[b]{0.32\textwidth}
            \centering
			\includegraphics[width=\textwidth]{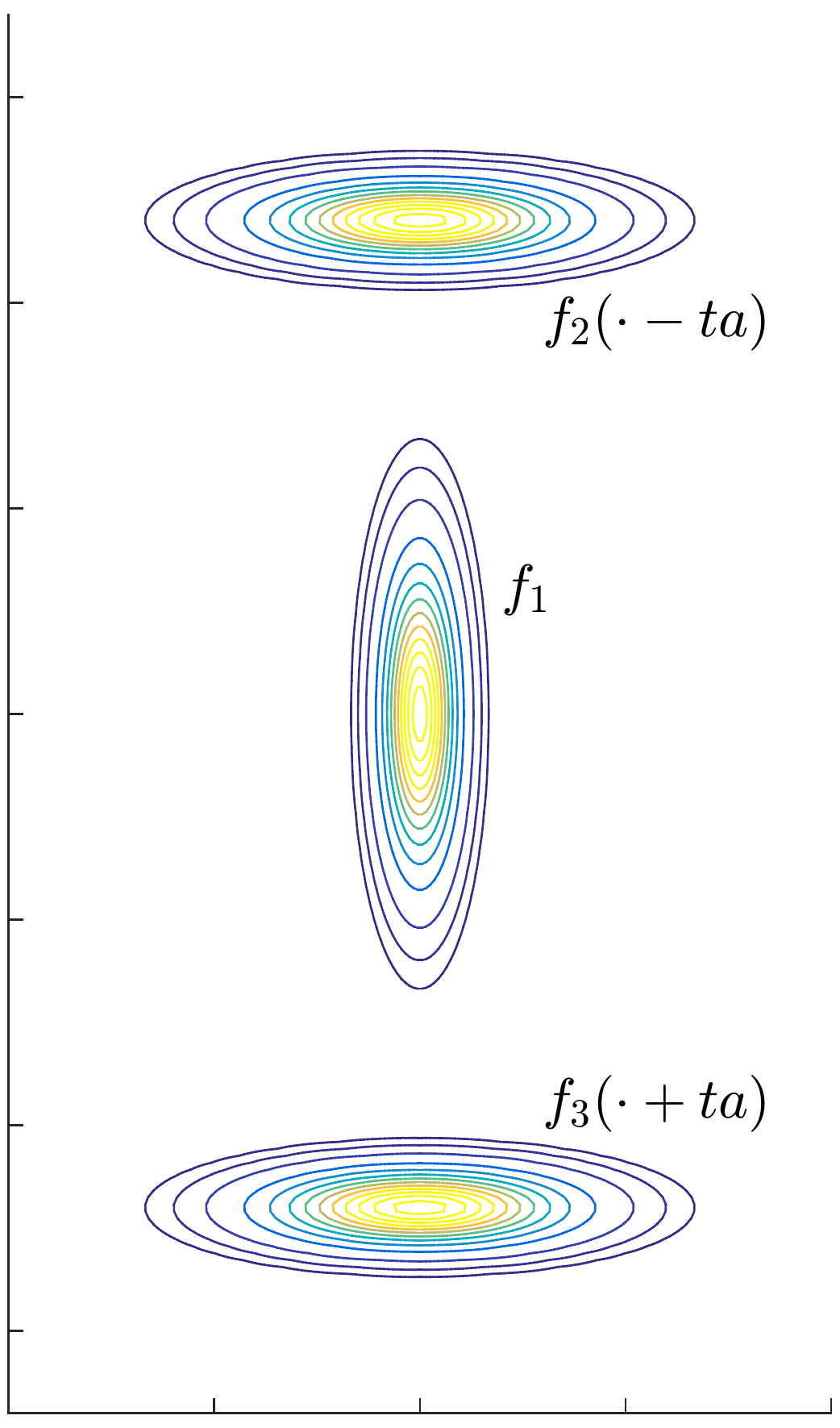}
            \caption{$f$ from Example
            \ref{example:threeGaussians}}
        \end{subfigure}
        \hfill
        \begin{subfigure}[b]{0.32\textwidth}
            \centering
			\includegraphics[width=\textwidth]{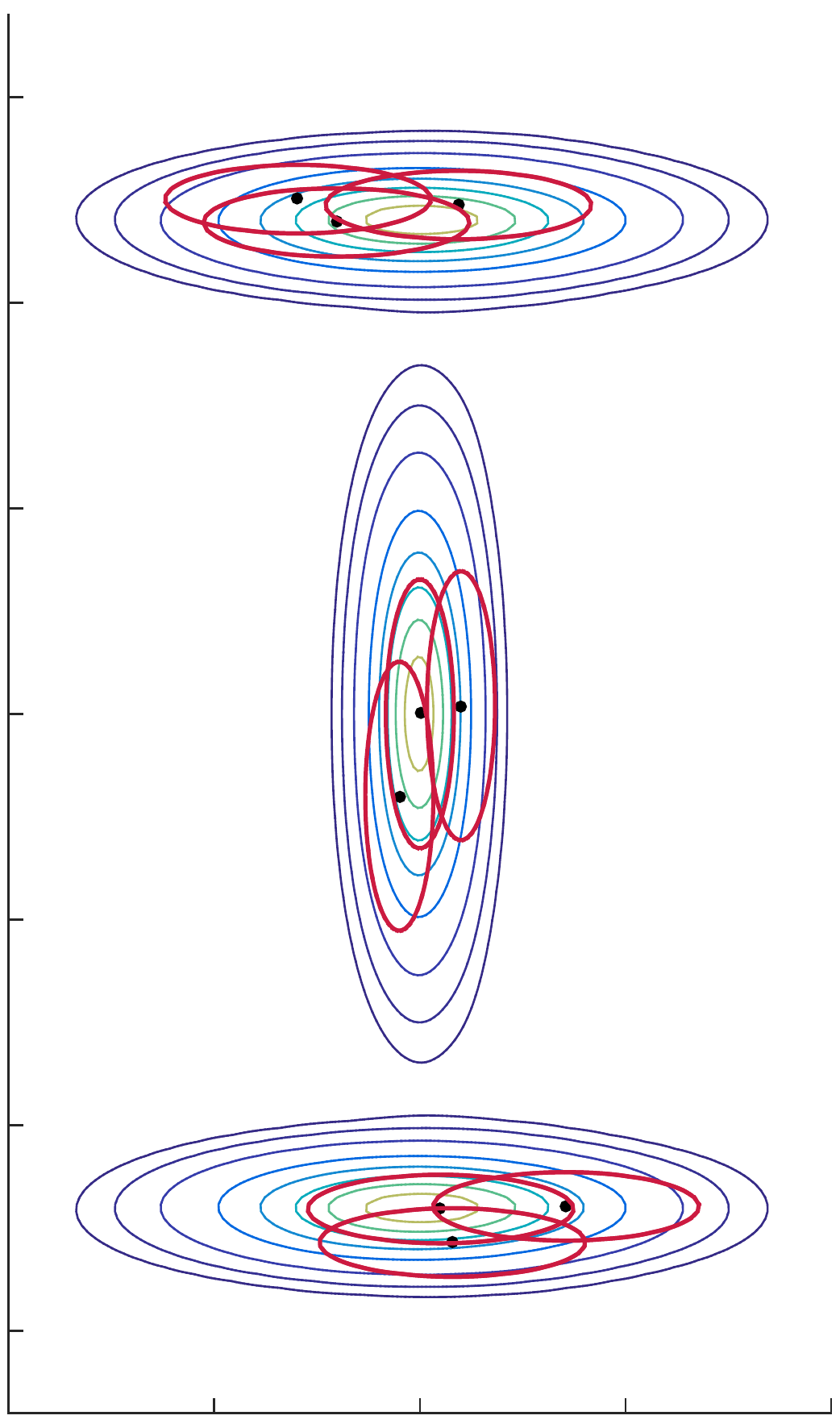}
			\caption{$f\ast_{\mu_f} g$ using \eqref{equ:adaptiveFBImu} for $\mu_f$}
		\end{subfigure}
		\hfill
        \begin{subfigure}[b]{0.32\textwidth}
            \centering
			\includegraphics[width=\textwidth]{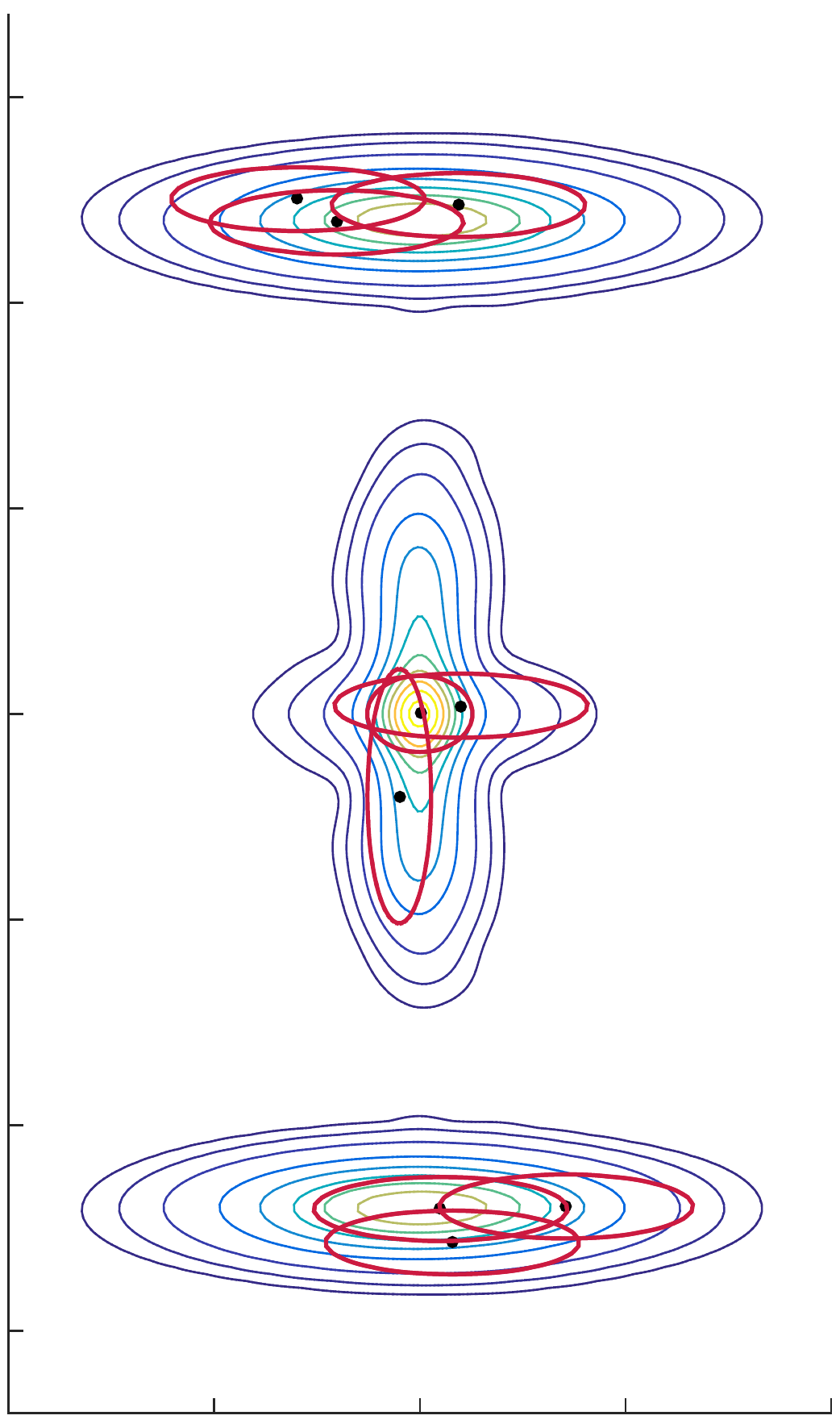}
			\caption{$f\ast_{\mu_f} g$ using \eqref{equ:finalChoiceMu} for $\mu_f$}
		\end{subfigure}
        \caption{Adaptive smoothing of the function $f$ from Example
        \ref{example:threeGaussians} using the rules \eqref{equ:adaptiveFBImu}
        and \eqref{equ:finalChoiceMu}.
        While the two Gaussians $f_2(\Cdot-ta)$ and $f_3(\Cdot+ta)$ are smoothed
        nicely in both cases (in fact, $\mu_f$ is constant there as stated in
        Corollary \ref{cor:gaugeWidth2}), the smoothing of $f_1$ via 
        \eqref{equ:finalChoiceMu} is highly influenced by the other two parts, resulting in undesirable
        oversmoothing in $x_2$-direction at its center. This effect remains
        unchanged even if $t$ is increased. The red ellipses show 80\% contours of the (stretched) kernels $\lvert\det\mu_f(y)\rvert\, g\big(\mu_f(y)(\Cdot-y)\big)$ for several centers $y$. The width
        $\sigma$ of the Gaussian kernel $g$ as well as $\lambda$ in
        \eqref{equ:adaptiveFBImu} were chosen manually.
        }
        \label{fig:three_gauss_density}
\end{figure}
\end{example}

\section{Conclusion}

After defining adaptive convolutions (for other types of adaptive convolutions
see Appendix \ref{section:Convolution2}) and analyzing their theoretical
properties, we have derived a formula for the adaptation function
$\mu_f$, which allows automatic adjustment of the local smoothing of a function $f$.
The requirements for such a formula were reasonable axioms on how the
adaptation function $\mu_f$ should behave under transformations (shifting and
scaling) of $f$.
Its derivation relied on the notion of the local variation of $f$, which we
argued can be quantified by means of certain phase space transforms.
Several suggestions for the mapping $f\mapsto\mu_f$ were made, but only
\eqref{equ:adaptiveFBImu} succeeds in fulfilling all Adaptation Axioms
\ref{cond:adaptation}.

The choice \eqref{equ:finalChoiceMu} looked promising, but failed to capture
solely local properties of $f$ and therefore, as demonstrated in Example
\ref{example:threeGaussians}, could not realize a key property required for
adaptive convolutions:
If the function $f = \sum_{k=1}^K f_k$ is the sum of several well-separated
functions $f_1,\dots,f_K$, then its adaptive convolution should also be
(approximately) the sum of the adaptive convolutions of $f_1,\dots,f_K$
(Theorem \ref{theorem:AdaptationConditions}(iv)).

The overall extent of the smoothing effect can be calibrated by choosing the width $\sigma$ of the smoothing kernel $g$ and applying the adaptive convolution to Gaussian functions $G[a,\Sigma]$, see Corollary \ref{cor:gaugeWidth2}. The choice of the parameter $\lambda$ remains an open problem. In our examples the values in the interval $[0.6,1.2]$ provided favorable results.

As a byproduct, we obtain an adaptive window selection method for time-frequency
representations, which is invariant under linear transformations of the signal,
allows different window sizes in different directions and adapts locally to the
signal's (mean squared) frequency.

The considerations in this paper are mainly theoretical and the computation of
the adaptation functions \eqref{equ:adaptiveFBImu} and \eqref{equ:finalChoiceMu}
appears intricate and costly (except for simple examples like the ones
presented here).
An application of adaptive convolutions to variable kernel density estimation
is discussed in a companion paper \cite{klebanovVKDE}, where also a numerical scheme for the
computation was developed in the case of Gaussian kernels.

\appendix

\section{Other Types of Adaptive Convolutions}
\label{section:Convolution2}

In the case of the common convolution $f\ast g$, the contribution of $f(y)$ to
$(f\ast g) (x)$ depends, roughly speaking, on the distance between $x$ and $y$.
In the following, we will introduce two further types of adaptive
convolutions, for which the contribution of $f(y)$ to the convolution evaluated
in $x$ depends on the distance between $h(x)$ and $h(y)$, where $h$ is a
function which controls the adaptation.

\begin{definition}[adaptive convolutions of types two and three]
\label{def:weightedAdapt}
Let $1\le p\le \infty$, $f\in L^1\left(\R^d\right),\ g_1\in
L^1\left(\R^n\right),\ h\colon \R^d\to\R^n$ be
a measurable function and $g_2,g\in L^p\left(\R^n\right)$ such that
\[
0 <
\left\|g(h(\Cdot)-z)\right\|_p<\infty\quad\text{and}\quad 0 <
\left\|g_2(h(\Cdot)-z)\right\|_p < \infty\qquad \text{for almost all
$z\in\R^d$}.
\]
\\
We define the \emph{$h$-adaptive convolutions of types two and three} by
\begin{align*}
(f\ast^p[g\, |\, h])(x)
&=
f\bar\ast G_p\, ,
\qquad
(f\ast^p[g_1,g_2\, |\, h])(x)
=
f\bar\ast \tilde G_p \, ,
\intertext{where}
G_p(x,y)
&=
\left\|g\right\|_p
\frac{g\left(h(x)-h(y)\right)}{\left\|g(h(\Cdot)-h(y))\right\|_p}\, ,
\\
\tilde G_p(x,y)
&=
\left\|g_2\right\|_p\int g_1(z-h(y))\,
\frac{g_2(z-h(x))}{\left\|g_2(z-h(\Cdot))\right\|_p} \, \mathrm dz\, .
\end{align*}
Again, we will omit the index $p$ in the case $p=1$.
\end{definition}

\begin{remark}
$f\ast^p[g\, |\, h]$ is homogeneous in $g$ and $f\ast^p[g_1,g_2\, |\, h]$
is homogeneous in $g_2$ and linear in $g_1$.
\end{remark}

\begin{proposition}[Young's inequality]
\label{prop:young2}
Under the conditions of Definition \ref{def:weightedAdapt}, we have:
\[
\left\|f\ast^p [g\, |\, h]\right\|_p\le
\left\|f\right\|_1\left\|g\right\|_p
\qquad\text{and}\qquad
\left\|f\ast^p [g_1,g_2\, |\, h]\right\|_p\le
\left\|f\right\|_1\left\|g_1\right\|_1 \left\|g_2\right\|_p\, .
\]
\end{proposition}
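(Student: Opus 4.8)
The plan is to derive both inequalities from the weak Young inequality in Theorem \ref{theorem:young1}: each type-two and type-three convolution is by construction a generalized convolution $f\bar\ast G$ with a measurable kernel, so it suffices to exhibit a bound $\|G(\Cdot,y)\|_p\le\Gamma$ that is \emph{uniform} in $y$, whereupon the theorem delivers $\|f\bar\ast G\|_p\le\|f\|_1\,\Gamma$. Thus the entire proof reduces to computing the correct $\Gamma$ for $G_p$ and for $\tilde G_p$, and the point is that in both cases the normalization built into Definition \ref{def:weightedAdapt} makes this computation exact (or trivially so).

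For the type-two kernel $G_p(x,y) = \|g\|_p\, g(h(x)-h(y))/\|g(h(\Cdot)-h(y))\|_p$ I would fix $y$ and take the $L^p$ norm in the variable $x$. The denominator $\|g(h(\Cdot)-h(y))\|_p$ is a constant in $x$ — it is precisely the $L^p(\mathrm dx)$ norm of the very function $x\mapsto g(h(x)-h(y))$ sitting in the numerator — so it cancels against the norm of the numerator and leaves $\|G_p(\Cdot,y)\|_p = \|g\|_p$ for every $y$ with $0<\|g(h(\Cdot)-h(y))\|_p<\infty$, which is exactly what the standing assumption of Definition \ref{def:weightedAdapt} guarantees. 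Hence $\Gamma = \|g\|_p$, and Theorem \ref{theorem:young1} yields the first inequality.

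For the type-three kernel $\tilde G_p(x,y) = \|g_2\|_p\int g_1(z-h(y))\, g_2(z-h(x))/\|g_2(z-h(\Cdot))\|_p\,\mathrm dz$ I would again fix $y$ and estimate the $L^p(\mathrm dx)$ norm, this time using Minkowski's integral inequality to move the norm inside the $z$-integral:
\[
\|\tilde G_p(\Cdot,y)\|_p \le \|g_2\|_p\int |g_1(z-h(y))|\, \frac{\|g_2(z-h(\Cdot))\|_{L^p(\mathrm dx)}}{\|g_2(z-h(\Cdot))\|_p}\,\mathrm dz.
\]
The crucial observation is that the numerator $\|g_2(z-h(\Cdot))\|_{L^p(\mathrm dx)}$ is literally the same quantity as the normalizing denominator (the name of the dummy integration variable is immaterial), so the ratio is identically $1$; translation invariance of the Lebesgue integral then gives $\int|g_1(z-h(y))|\,\mathrm dz = \|g_1\|_1$, whence $\Gamma = \|g_1\|_1\,\|g_2\|_p$ uniformly in $y$, and Theorem \ref{theorem:young1} produces the second inequality.

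The routine parts — that $G_p$ and $\tilde G_p$ are jointly measurable on $\R^d\times\R^d$ so that Theorem \ref{theorem:young1} applies — follow from measurability of $g,g_1,g_2,h$ together with Tonelli's theorem, using that the normalizing factors are positive and finite almost everywhere by hypothesis. The only genuinely delicate step is the application of Minkowski's integral inequality for the type-three kernel: one must justify the interchange (Tonelli guarantees the integrand is nonnegative and measurable) and treat $p=\infty$ separately, where the estimate becomes an essential-supremum bound but the cancellation argument is identical. I expect this normalization cancellation to be the real content of the proposition; everything else is bookkeeping.
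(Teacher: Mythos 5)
Your proposal is correct and takes essentially the same route as the paper: both proofs reduce to Theorem \ref{theorem:young1} by bounding $\left\|G_p(\Cdot,y)\right\|_p = \|g\|_p$ and $\left\|\tilde G_p(\Cdot,y)\right\|_p \le \|g_1\|_1\, \|g_2\|_p$ uniformly in $y$, with the cancellation coming from the fact that the normalizing denominators are exactly $L^p(\mathrm dx)$-norms of the corresponding numerators. The only divergence is that you invoke Minkowski's integral inequality for the type-three kernel, while the paper re-derives the same bound by hand (cases $p=1,\infty$ treated directly, and for $1<p<\infty$ a H\"older splitting of $|g_1|$ into $|g_1|^{1/p'}\, |g_1|^{1/p}$ combined with the unit-norm cancellation), which is precisely the standard proof of Minkowski's integral inequality specialized to this kernel.
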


\begin{figure}[H]
\centering
\begin{subfigure}[b]{0.25\textwidth}
	\centering
	\includegraphics[width=\textwidth]{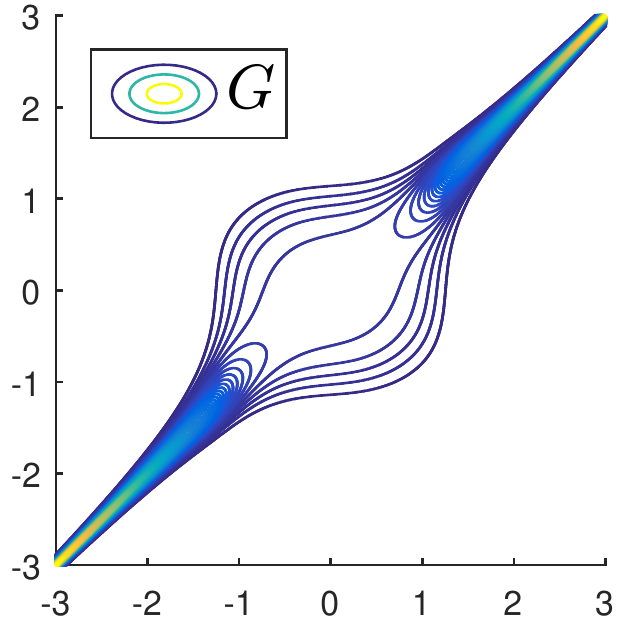}	
\end{subfigure}
\hspace{0.4cm}
\begin{subfigure}[b]{0.5\textwidth}
	\centering
	\includegraphics[width=\textwidth]{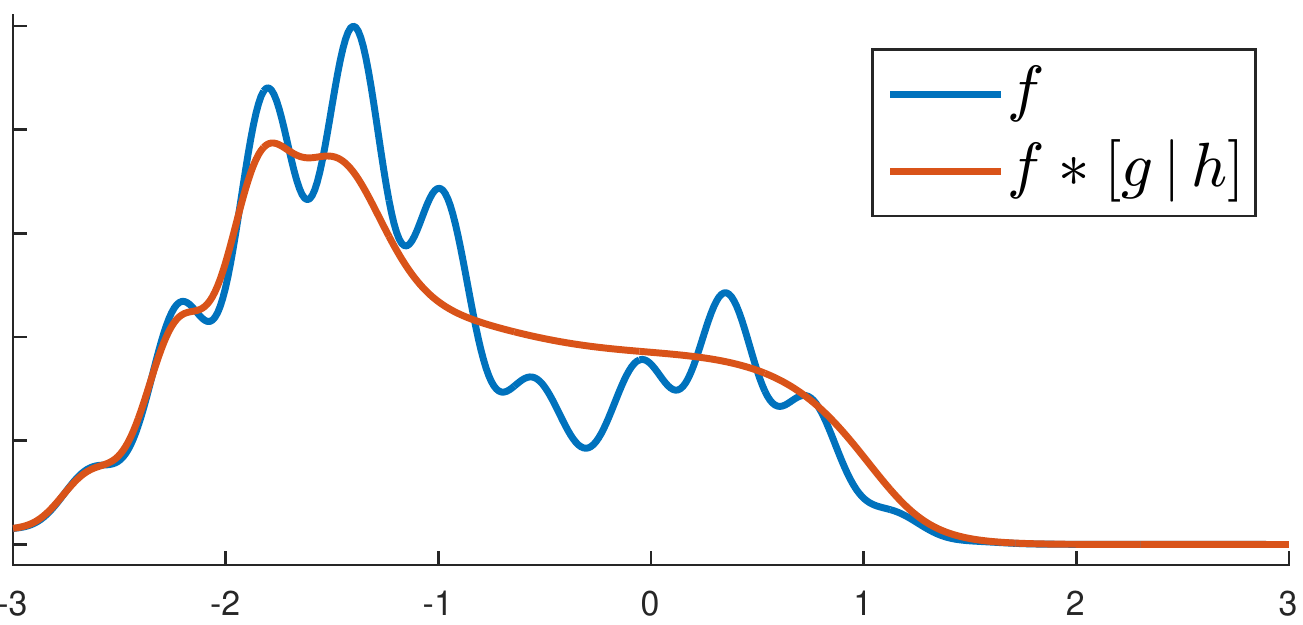}	
\end{subfigure}        
        \vfill
\begin{subfigure}[b]{0.25\textwidth}
	\centering
	\includegraphics[width=\textwidth]{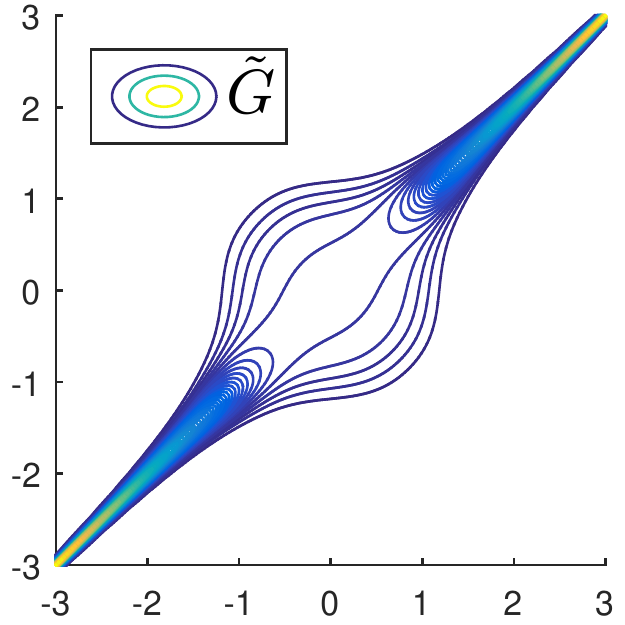}	
\end{subfigure}
\hspace{0.4cm}
\begin{subfigure}[b]{0.5\textwidth}
	\centering
	\includegraphics[width=\textwidth]{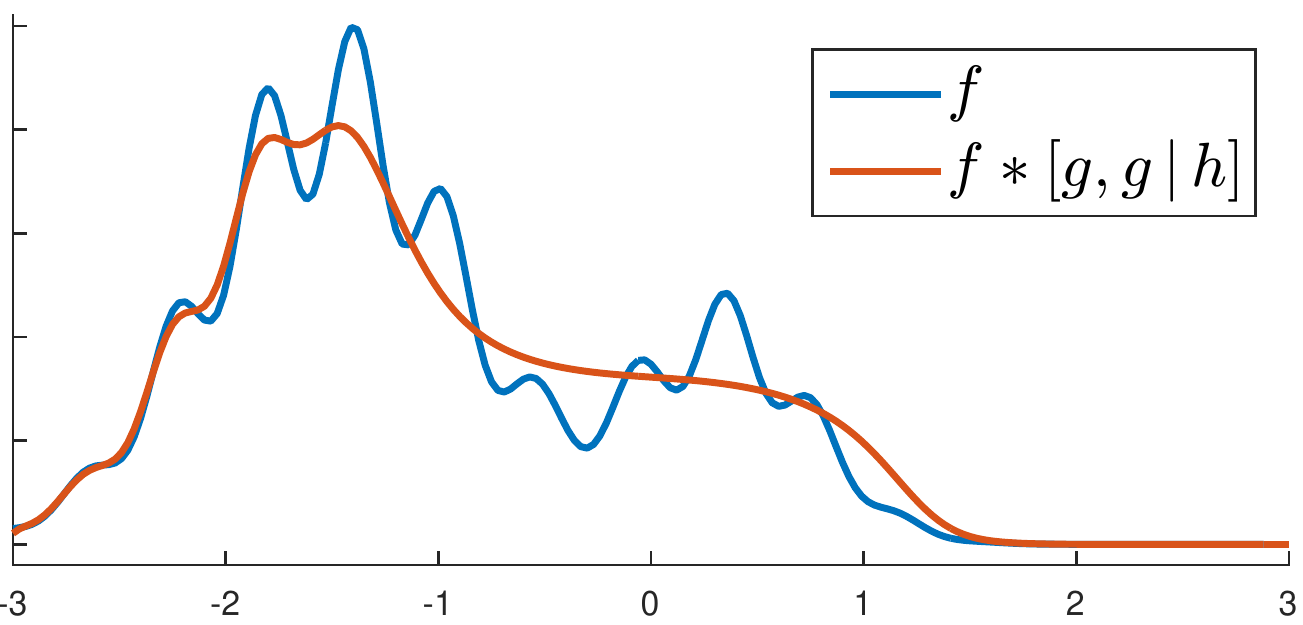}	
\end{subfigure}
\caption{Adaptive Convolutions of type two and three for Gaussian $g,\, g_1,\,
g_2$, a cubic polynomial $h(x) = x^3 + x/3$ and $p=1$. Note that $\tilde
G$ is symmetric, while $G$ is not (see also Proposition
\ref{prop:symmetricG}). Both convolutions provide a strong smoothing close to
zero, and nearly no smoothing away from zero, where $G$ and $\tilde G$ act
nearly like Dirac $\delta$-distributions.}
\label{fig:weighted_cubic}
\end{figure}


\begin{figure}[H]
\centering
\begin{subfigure}[b]{0.25\textwidth}
	\centering
	\includegraphics[width=\textwidth]{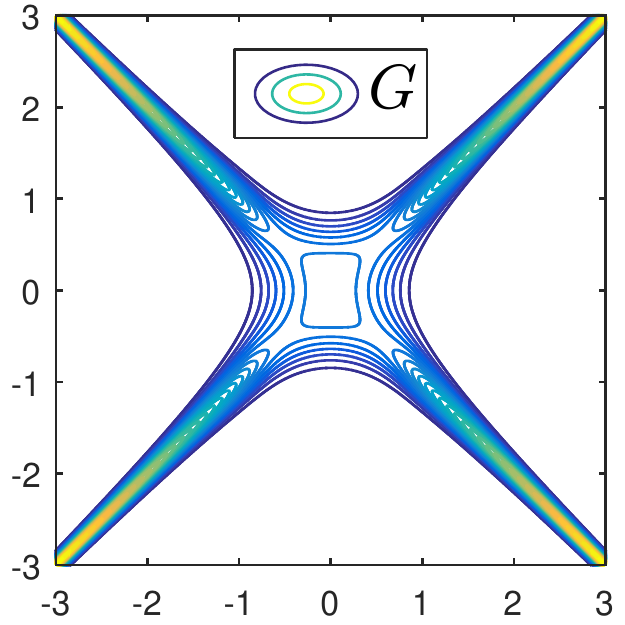}
\end{subfigure}
\hspace{0.4cm}
\begin{subfigure}[b]{0.5\textwidth}
	\centering
	\includegraphics[width=\textwidth]{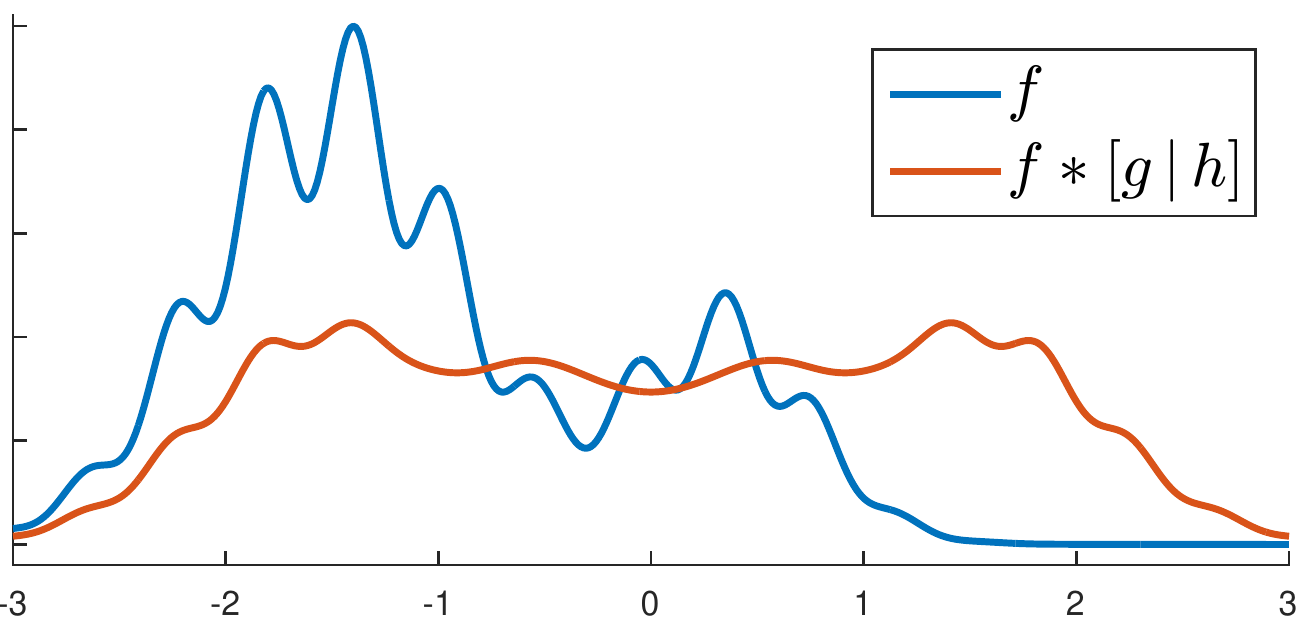}
\end{subfigure}        
        \vfill
\begin{subfigure}[b]{0.25\textwidth}
	\centering
	\includegraphics[width=\textwidth]{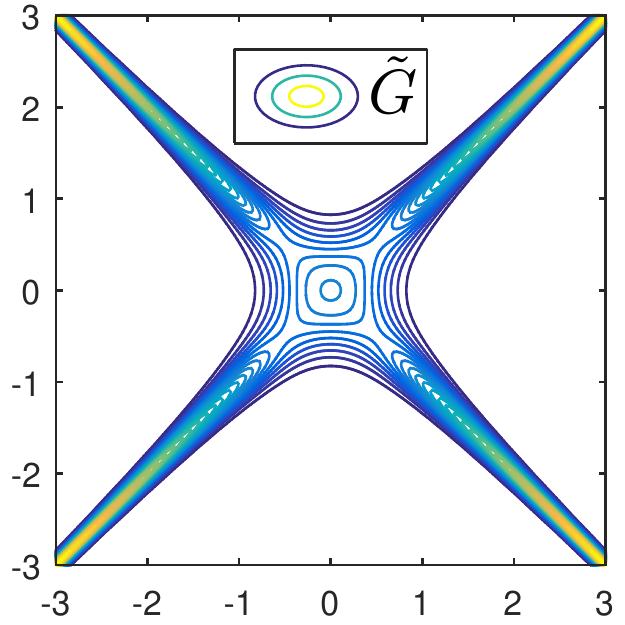}
\end{subfigure}
\hspace{0.4cm}
\begin{subfigure}[b]{0.5\textwidth}
	\centering
	\includegraphics[width=\textwidth]{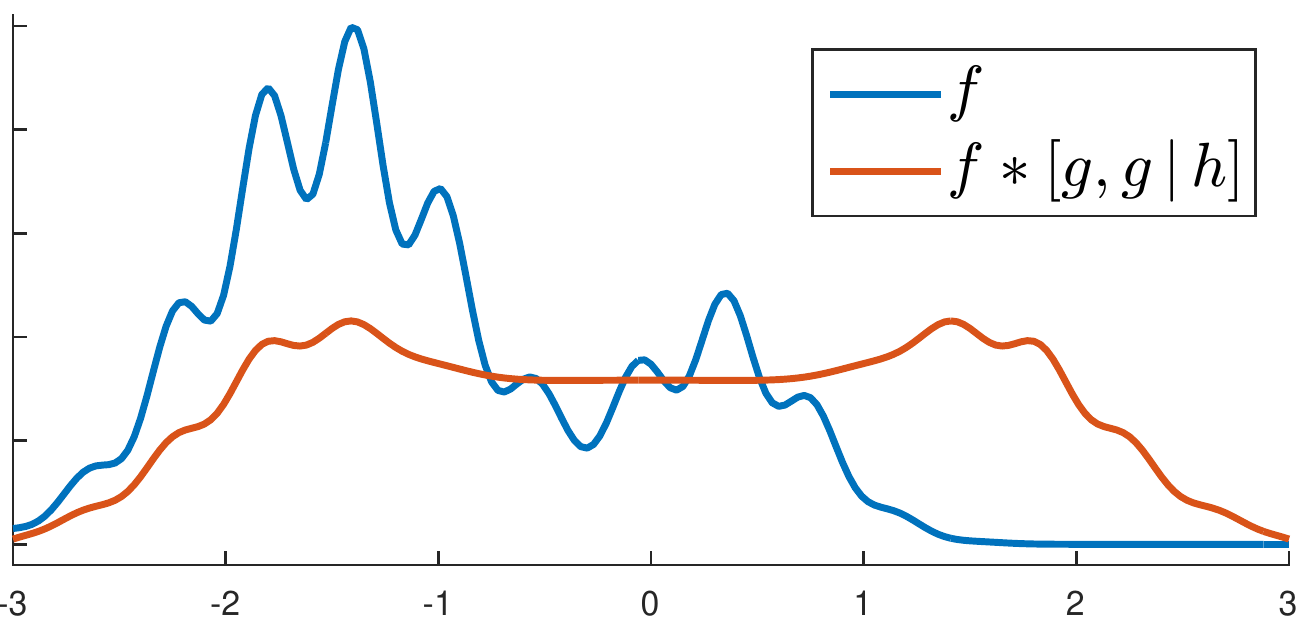}	
\end{subfigure}
\caption{Adaptive Convolutions of type two and three for Gaussian $g,\, g_1,\,
g_2$, a quadratic $h(x) = x^2$ and $p=1$. Observe how the function $f$ is
`smoothed over to the right side' due to the non-injectivity of $h$. Again,
the smoothing is stronger close to zero.}
\label{fig:weighted_quadratic}
\end{figure}

If $g_1 = g_2 =: g$, the
$h$-adaptive convolution of type three fulfills the general Young's
inequality:
\begin{proposition}
\label{prop:symmetricG}
Let $1\le p,q,r\le\infty$ such that $1+\frac{1}{r} = \frac{1}{p} +
\frac{1}{q}$.
Let $f\in L^q\left(\R^d\right)$, $g_1=g_2=:g\in L^1\cap
L^p\left(\R^n\right)$ and $h\colon \R^d\to\R^n$ be a measurable function such that
\\
$0 < \left\|g(h(\Cdot)-z)\right\|_p < \infty$ for almost all
$z\in\R^d$. Then $\tilde G_p$ is symmetric and
\[
\left\|f\ast^p [g,g\, |\, h]\right\|_r\le
\left\|f\right\|_q\left\|g\right\|_1 \left\|g\right\|_p\, .
\]
\end{proposition}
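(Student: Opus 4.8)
The plan is to deduce the inequality directly from the strong Young's inequality for generalized convolutions, Theorem~\ref{theorem:young2}, applied to the kernel $G=\tilde G_p$. To invoke that theorem I must exhibit a single constant $\Gamma$ bounding \emph{both} mixed norms $\|\tilde G_p(\Cdot,y)\|_p$ and $\|\tilde G_p(x,\Cdot)\|_p$, and the natural candidate is $\Gamma=\|g\|_1\,\|g\|_p$. The symmetry assertion does double duty here: once it is in hand, the two required mixed-norm estimates collapse to a single one.

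Symmetry is the easy part and I would dispatch it first. With $g_1=g_2=g$ the kernel reads
\[
\tilde G_p(x,y) = \|g\|_p \int_{\R^n} g(z-h(y))\, \frac{g(z-h(x))}{\|g(z-h(\Cdot))\|_p}\, \mathrm dz ,
\]
and interchanging $x$ and $y$ leaves the product $g(z-h(x))\,g(z-h(y))$ invariant, while the denominator $\|g(z-h(\Cdot))\|_p$ depends on $z$ alone; hence $\tilde G_p(x,y)=\tilde G_p(y,x)$. (The hypothesis $0<\|g(h(\Cdot)-z)\|_p<\infty$ a.e.\ guarantees that the denominator is a well-defined positive and finite quantity, so the integrand makes sense.)

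For the norm bound I would fix $y$ and estimate $\|\tilde G_p(\Cdot,y)\|_p$, the $L^p$-norm in the variable $x$. Since $g(z-h(y))$ does not depend on $x$, I view $\tilde G_p(\Cdot,y)$ as the superposition $\|g\|_p\int_{\R^n} g(z-h(y))\,F_z\,\mathrm dz$ of the functions $F_z(x):=g(z-h(x))/\|g(z-h(\Cdot))\|_p$, and apply Minkowski's integral inequality:
\[
\|\tilde G_p(\Cdot,y)\|_p \le \|g\|_p \int_{\R^n} |g(z-h(y))|\; \|F_z\|_p\, \mathrm dz .
\]
The crucial cancellation is that the normalization built into $F_z$ forces $\|F_z\|_p = \|g(z-h(\Cdot))\|_p / \|g(z-h(\Cdot))\|_p = 1$ for each $z$; combined with translation invariance $\int_{\R^n}|g(z-h(y))|\,\mathrm dz=\|g\|_1$ this gives $\|\tilde G_p(\Cdot,y)\|_p \le \|g\|_1\,\|g\|_p=:\Gamma$ uniformly in $y$. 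By the symmetry just established, $\|\tilde G_p(x,\Cdot)\|_p=\|\tilde G_p(\Cdot,x)\|_p\le\Gamma$ as well, so Theorem~\ref{theorem:young2} applies and yields $\|f\bar\ast\tilde G_p\|_r=\|f\ast^p[g,g\,|\,h]\|_r\le\|f\|_q\,\Gamma=\|f\|_q\,\|g\|_1\,\|g\|_p$.

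I expect the only genuine subtlety to be the application of Minkowski's integral inequality and, hidden inside it, the measurability/Fubini justification needed to treat $\tilde G_p(\Cdot,y)$ as a bona fide vector-valued integral of the slices $F_z$ --- routine, but worth a sentence. Conceptually, the point to flag is that it is precisely the symmetric case $g_1=g_2$ that lets me control \emph{both} mixed norms of the kernel; this is exactly what upgrades the weak estimate of Proposition~\ref{prop:young2} to the full Young range $1+\tfrac1r=\tfrac1p+\tfrac1q$ through Theorem~\ref{theorem:young2}.
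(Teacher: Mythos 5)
Your proof is correct, and at the structural level it is the same as the paper's: establish symmetry of $\tilde G_p$, bound both mixed norms by the single constant $\Gamma=\|g\|_1\|g\|_p$, and invoke Theorem \ref{theorem:young2}. The one genuine difference is the derivation of the kernel estimate $\|\tilde G_p(\Cdot,y)\|_p\le\|g\|_1\|g\|_p$. The paper obtains it by pointing back to the proof of Proposition \ref{prop:young2}, which splits $|g_1(z-h(y))|=|g_1(z-h(y))|^{1/p'}|g_1(z-h(y))|^{1/p}$, applies H\"older's inequality, and handles $p=1$ and $p=\infty$ as separate cases. You instead view $\tilde G_p(\Cdot,y)$ as a superposition of the normalized slices $F_z(x)=g(z-h(x))/\|g(z-h(\Cdot))\|_p$ and apply Minkowski's integral inequality, using that the built-in normalization forces $\|F_z\|_p=1$ and that $\int_{\R^n}|g(z-h(y))|\,\mathrm dz=\|g\|_1$. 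This buys you a uniform treatment of all $1\le p\le\infty$ in a single computation and makes the role of the normalization completely transparent, at the cost of invoking Minkowski's integral inequality (with its routine vector-valued measurability caveat, which you rightly flag) rather than the elementary H\"older splitting. Both routes produce the same constant, and both then use the symmetry $\tilde G_p(x,y)=\tilde G_p(y,x)$ --- which you and the paper prove by the identical observation that the denominator depends on $z$ alone --- to get the second mixed-norm bound for free.
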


\begin{remark}
Several generalizations to adaptive convolutions of types two and three are
possible:
\begin{enumerate}
 \item
Nothing changes, if we replace $h$ by two different functions $h_1,\, h_2\colon \R^d\to\R^n$ in the following way:
 \begin{align*}
(f\ast^p[g\, |\, h_1,h_2])(x)
&=
f\bar\ast G_p\, ,
\qquad
(f\ast^p[g_1,g_2\, |\, h_1,h_2])(x)
=
f\bar\ast \tilde G_p\, ,
\intertext{where}
G_p(x,y)
&=
\left\|g\right\|_p
\frac{g\left(h_1(x)-h_2(y)\right)}{\left\|g(h_1(\Cdot)-h_2(y))\right\|_p}\, ,
\\
\tilde G_p(x,y)
&=
\left\|g_2\right\|_p\int g_1(z-h_1(y))\,
\frac{g_2(z-h_2(x))}{\left\|g_2(z-h_2(\Cdot))\right\|_p} \, \mathrm dz\, .
\end{align*}
 \item
If
$g_1\in
L^1\left(\R^d\times\R^n\right),\ g_2,\, g\in L^p\left(\R^d\times\R^n\right)$
depend on an additional parameter in $\R^d$ and
\begin{itemize}
  \item $\displaystyle\int_{\R^d} |g_1(y,z-h(y))|\, \mathrm dy\le \Gamma_1$ for
  some constant $\Gamma_1>0$ (independent of $z$),
\item for almost all $z\in\R^d$,
\[
0< \int |g(\tilde x, z-h(\tilde x))|^p\, \mathrm d\tilde x < \infty
\ \quad\text{and}\quad\ 
0< \int |g_2(\tilde x, z-h(\tilde x))|^p\, \mathrm d\tilde x < \infty,
\]
\end{itemize}
we can generalize
\begin{align*}
G_p(x,y)
&=
\left\|g\right\|_p\, 
\frac{g\left(x,h(x)-h(y)\right)}{\left(\int |g(\tilde x, z-h(\tilde x))|^p\,
\mathrm d\tilde x\right)^{1/p}}\, ,
\\
\tilde G_p(x,y)
&=
\left\|g_2\right\|_p\, \int g_1(y,z-h(y))\,
\frac{g_2(x,z-h(x))}{\left(\int |g_2(\tilde x, z-h(\tilde x))|^p\, \mathrm
d\tilde x\right)^{1/p}} \, \mathrm dz\, .
\end{align*}
In this case, Young's inequality takes the forms
\[
\left\|f\ast^p [g\, |\, h]\right\|_p\le
\left\|f\right\|_1\left\|g\right\|_p
\qquad\text{and}\qquad
\left\|f\ast^p [g_1,g_2\, |\, h]\right\|_p\le
\Gamma_1\, \left\|f\right\|_1 \left\|g_2\right\|_p\, .
\]
\end{enumerate}
\end{remark}
\begin{remark}
If $g,g_1,g_2$ are Gaussian functions or similar (in the sense that they
attain their maximum in the origin and decay monotonically to zero as
$x\to\pm\infty$) and $D h(x)$ is invertible
for each $x\in\R^d$, both convolutions approximate the $\mu$-adaptive convolution
from Section \ref{section:Theory} for $\mu = D h$.
More precisely, in this case the
linearization of $h$ at $y$, $h(x)-h(y)\approx Dh(y)\, (x-y)$, is meaningful and yields
\begin{align*}
G_p(x,y)
&=
\left\|g\right\|_p
\frac{g\left(h(x)-h(y)\right)}{\left\|g(h(\Cdot)-h(y))\right\|_p}
\approx
\left\|g\right\|_p \frac{g\left(Dh(y)\, (x-y)\right)}{\left\|g(Dh(y)\,
(\Cdot-y))\right\|_p}
=
|\det Dh(y)|^{1/p} g\left(Dh(y)\, (x-y) \right),
\\[0.2cm]
\tilde G_p(x,y)
&=
\left\|g_2\right\|_p \int g_1(z)\,
\frac{g_2(z-h(x)+h(y))}{\left\|g_2(z-h(\Cdot)+h(y))\right\|_p} \, \mathrm dz
\approx
\left\|g_2\right\|_p \int g_1(z)\,
\frac{\tau g_2(Dh(y)\, (x-y)-z)}{\left\|g_2(Dh(y)\, (\Cdot-y)-z)\right\|_p} \,
\mathrm dz
\\
&=
|\det Dh(y)|^{1/p}\, (g_1\ast \tau g_2)\left(Dh(y)\, (x-y) \right),
\end{align*}
where $\tau g_2 (x):= g_2(-x)$ denotes the reflection of $g_2$.
This implies
\[
f\ast^p[g\, |\, h]
\approx
f\ast_{Dh}^pg
\qquad
\text{and}
\qquad
f\ast^p[g_1, g_2\, |\, h]
\approx
f\ast_{Dh}^p(g_1\ast \tau g_2).
\]
This observation is clear on an intuitive level, since $G_p(x,y)$ is the
magnitude of the contribution of $f(y)$ in the term $\left(f\ast^p [g\, |\, h]\right)(x)$. In the case of the
common convolution (and Gaussian $g$ or similar), this magnitude depends on the
distance between $x$ and $y$. Here, it depends on the distance between $h(x)$
and $h(y)$, hence the convolution is weighted by the `slope' of $h$, see
Figure \ref{fig:weighted_cubic}.
\vspace{0.2cm}

However, if $h$ is not injective, the new convolutions $f\ast^p[g\,
|\, h]$ and $f\ast^p[g_1, g_2\, |\, h]$ yield further possibilities. For
example, we can `let a value $f(y)$ contribute strongly to $f\ast^p[g\,
|\, h](x)$, even though $x$ is far away from $y$ (without contributing strongly to most values in
between)' by choosing $h$ such that $h(y)\approx h(x)$, see Figure
\ref{fig:weighted_quadratic}.
\end{remark}

\section{Proofs}
\label{section:proofs}

\begin{proof}[Proof of Theorem \ref{theorem:young1}]
The cases $p=1$ and $p = \infty$ are straightforward:
\begin{align*}
&\|f\bar\ast G\|_1
\le 
\int_{\R^d} \int_{\R^d} |f(y)|\, |G(x,y)|\, \mathrm dy \, \mathrm dx
=
\int_{\R^d} |f(y)| \int_{\R^d} |G(x,y)|\, \mathrm dx \, \mathrm dy 
\le
\left\|f\right\|_1 \Gamma \, ,
\\[0.2cm]
&\|f\bar\ast G\|_\infty
\le 
\operatorname*{ess\,sup}_{x\in\R^d} \int_{\R^d} |f(y)|\,
\left| G(x,y)\right|\, \mathrm dy
\le 
\int_{\R^d} |f(y)|\, \mathrm dy\, \Gamma
=
\|f\|_1\, \Gamma\, .
\end{align*}
Now, let $1<p<\infty$ and $p'$ denote its conjugate
exponent, i.e.
$\frac{1}{p}+\frac{1}{p'} = 1$.
H\"older's inequality yields
\begin{align*}
|f \bar\ast G|(x)
&\le 
\int_{\R^d} |f(y)|^{\frac{1}{p'}}\, |f(y)|^{\frac{1}{p}}\,
|G(x,y)|\, \mathrm dy
\le 
\left\||f|^{\frac{1}{p'}}\right\|_{p'}\, \left\||f|^{\frac{1}{p}}\, 
|G(x,\Cdot)|\right\|_{p},
\end{align*}
which implies
\begin{align*}
&&&\|f\bar\ast G\|_{p}^p&
&\le &
&\left\||f|^{\frac{1}{p'}}\right\|_{p'}^p\, 
\int_{\R^d} \left\||f|^{\frac{1}{p}}\, 
|G(x,\Cdot)|\right\|_{p}^p \, \mathrm dx&
&=&
&\big\|f\big\|_1^{p/p'}\, 
\int_{\R^d} \int_{\R^d}|f(y)|\, |G(x,y)|^p \ \mathrm dy\ \mathrm dx&&&
\\
&&&&&=&
&\big\|f\big\|_1^{p/p'}\, 
\int_{\R^d}|f(y)| \left\|G(\Cdot,y)\right\|_p^p \, \mathrm dy&
&\le &
&\big\|f\big\|_1^{1+p/p'}\, 
\Gamma^p
\le 
\|f\|_{1}^p\, \Gamma^p\, .&&&
\end{align*}
\end{proof}

\begin{proof}[Proof of Corollary \ref{cor:young}]
First note that for $p<\infty$ and $y\in\R^d$ the change of variables formula
implies:
\begin{equation}
\label{equ:gMuNorm}
\left\| g_{\mu,p}(\Cdot,y)\right\|_p^p
=
\int_{\R^d}|\det\mu(y)|\, \left| g\big(\mu(y)(x-y)\big)\right|^p\,
\mathrm dx
=
\int_{\R^d} \left| g\left(x\right)\right|^p\,
\mathrm dx
=
\left\| g\right\|_p^p.
\end{equation}
For $p=\infty$, the statement $\left\| g_{\mu,p}(\Cdot,y)\right\|_p\le
\left\|g\right\|_p$ is trivial. Theorem \ref{theorem:young1} proves the claim.
\end{proof}

\begin{proof}[Proof of Theorem \ref{theorem:young2}]
The case $r=\infty$ was treated in Theorem \ref{theorem:young1} and
$r<\infty$ implies $p,q<\infty$.
Since
\[
\frac{r-q}{qr} + \frac{r-p}{pr} + \frac{1}{r}
=
\frac{1}{q} - \frac{1}{r} + \frac{1}{p} - \frac{1}{r} + \frac{1}{r}
=
\frac{1}{q} + \frac{1}{p} - \frac{1}{r}
=
1,
\]
the generalized H\"older's inequality yields
\begin{align*}
|f \bar\ast G|(x)
&\le 
\int_{\R^d} |f(y)|^{1-\frac{q}{r}}\, |f(y)|^{\frac{q}{r}}\, 
|G(x,y)|^{1-\frac{p}{r}}\, |G(x,y)|^{\frac{p}{r}}\, \mathrm dy
\\
&\le
\underbrace{\left\||f|^{\frac{r-q}{r}}\right\|_{\frac{qr}{r-q}}}_{=:A}
\underbrace{\left\||G(x,\Cdot)|^{\frac{r-p}{r}}\right\|_{\frac{pr}{r-p}}}_{=:B(x)}
\underbrace{\left\| \left(|f|^q\,|G(x,\Cdot)|^p
\right)^{\frac{1}{r}}\right\|_{r\phantom{\frac{q}{q}}}}_{=:C(x)}
\intertext{and}
A^r
&=
\left(\int |f(y)|^q\, \mathrm dy\right)^{\frac{r-q}{q}}
=
\left\|f\right\|_q^{r-q},
\\
B(x)^r
&=
\left(\int |G(x,y)|^p\, \mathrm dy\right)^{\frac{r-p}{p}}
=
\left\|G(x,\Cdot)\right\|_p^{r-p}
\le \Gamma^{r-p}\, ,
\\
C(x)^r
&=
\int |f(y)|^q\, |G(x,y)|^p\, \mathrm dy\, .
\end{align*}
This implies
\begin{align*}
\|f\bar\ast G\|_{r}^r
&\le
\int A^r\, B(x)^r\, C(x)^r\, \mathrm dx
\le 
\left\|f\right\|_q^{r-q}\, \Gamma^{r-p}
\int |f(y)|^q \underbrace{\int |G(x,y)|^p\, \mathrm dx}_{\le \Gamma^p}\, \mathrm
dy
\le 
\left\|f\right\|_q^{r}\, \Gamma^{r}\, .
\end{align*}
\end{proof}

\begin{proof}[Proof of Proposition \ref{prop:ConvDer}]
For all $j=1,\dots,d$ and $\alpha\in\N^d$ with $|\alpha|<m$, we have by
induction:
\begin{align*}
\partial_{x_j}\partial^\alpha\left(f\ast_{\mu}^p g\right)(x)
&=
\partial_{x_j}\left(\int_{\R^d} f(y)\, |\det(\mu(y))|^{1/p}\,
\alpha(\mu(y))\, D^{|\alpha|} g\big(\mu(y)(x-y)\big)\, \mathrm dy \right)
\\
&=
\int_{\R^d} f(y)\, |\det(\mu(y))|^{1/p}
\Big[\alpha(\mu(y)),\mu(y)_{\bullet,j}\Big] D^{|\alpha|+1}
g\big(\mu(y)(x-y)\big)\, \mathrm dy
\\
&=
\int_{\R^d} f(y)\, |\det(\mu(y))|^{1/p}\,
(\alpha+e_j)(\mu(y))\, D^{|\alpha|+1}
g\big(\mu(y)(x-y)\big)\, \mathrm dy
\\
&=
\left[\left(f\cdot (\alpha + e_j)(\mu)\right)\ast_{\mu}^p\,
D^{|\alpha|+1}g\right](x).
\end{align*}
\end{proof}

\begin{proof}[Proof of Proposition \ref{prop:convolutionContinuity2}]
Without loss of generality, we may assume that $\det\mu_t(x) >0$ for all $x\in\R^d$ and
$t\in\R$.
We will use the abbreviations $\delta_t:=\left(\det\mu_t\right)^{1/p}$, $f_\mu(x,y):=f\big(\mu(y)(x-y)\big)$ for
functions $\mu\colon \R^d\to\gldr$, $f\colon \R^d\to\R$ (note that latter notation
differs by a prefactor from the one used in Definition \ref{def:adaptiveConvolution}) and
$A_{k,\Cdot}^\intercal$ will denote the transpose of the $k$-th row of the matrix $A$.
The observations
\begin{small}
\begin{align*}
\left(\tr\left[\mu_t^{-1}(y)\partial_{k}\mu_t(y)\right]
\right)_{k=1}^d j_t(y)
&=
\sum_{k=1}^d \tr\left[\mu_t^{-1}(y)\partial_{k}\mu_t(y)\right]\, j_{k,t}(y)
=
\tr\left[\mu_t^{-1}(y)\sum_{k=1}^d\partial_{k}\mu_t(y)\, j_{k,t}(y)\right]
\\
&=
\tr\left[\mu_t^{-1}(y)
\begin{pmatrix}
\sum_{k=1}^d j_{k,t}(y)\,
\left(\partial_{k}\left(\mu_t\right)_{1,\Cdot}^\intercal\right)^{\intercal}(y)
\\
\vdots
\\
\sum_{k=1}^d j_{k,t}(y)\,
\left(\partial_{k}\left(\mu_t\right)_{d,\Cdot}^\intercal\right)^{\intercal}(y)
\end{pmatrix}
\right]
=
\tr\left[\mu_t^{-1}(y)
N_t(y)\right],
\\[0.2cm]
D_y\big[\mu_t(y)\, (x-y)\big]
&= 
\begin{pmatrix}
D_y\big[\left(\mu_{t}\right)_{1,\Cdot}(y)(x-y)\big]
\\ \vdots \\
D_y\big[\left(\mu_{t}\right)_{d,\Cdot}(y)(x-y)\big]
\end{pmatrix}
=
\underbrace{\begin{pmatrix}
(x-y)^{\intercal} \left(D_y
\left(\mu_{t}\right)_{1,\Cdot}^{\intercal}\right)(y)
\\ \vdots \\
(x-y)^{\intercal} \left(D_y
\left(\mu_{t}\right)_{d,\Cdot}^{\intercal}\right)(y)
\end{pmatrix}}_{=:\, M_t(x,y)}
- \mu_t(y),
\\[0.2cm]
M_t(x,y)\, j_t(y)
&=
\begin{pmatrix}
j_t(y)^{\intercal} \left(D_y
\left(\mu_{t}\right)_{1,\Cdot}^{\intercal}\right)^{\intercal}\hspace{-0.1cm}(y)\,
(x-y) \\ \vdots \\
j_t(y)^{\intercal} \left(D_y
\left(\mu_{t}\right)_{d,\Cdot}^{\intercal}\right)^{\intercal}\hspace{-0.1cm}(y)\,
(x-y)
\end{pmatrix}
=
N_t(y)\, (x-y)
\end{align*}
\end{small}
lead to
\begin{align*}
&\left(\nabla_y\Big[\delta_t(y)  \, g_{\mu_t}(x,y)\Big]\right)^{\intercal}j_t(y)
\\
&=
\delta_t(y) \left(
g_{\mu_t}(x,y) \left(\tr\left[\mu_t^{-1}(y)\partial_{k}\mu_t(y)\right]
\right)_{k=1}^d + (\nabla g)_{\mu_t}(x,y)^{\intercal}
\left[M_t(x,y)-\mu_t(y)\right] \right) j_t(y)
\\
&=
\delta_t(y) \Big(
g_{\mu_t}(x,y) \tr\left[\mu_t^{-1}(y)\, N_t(y)\right]
 + (\nabla g)_{\mu_t}(x,y)^{\intercal} \big[N_t(y)\, (x-y)
 - \mu_t(y)\, j_t(y)\big]
\Big)
\\
&=
\diver_x \Big(
\delta_t(y)\, g_{\mu_t}(x,y)\big[\mu_t^{-1}(y)\, N_t(y)\, (x-y)
 - j_t(y)\big]
 \Big)
\end{align*}
and
\begin{align*}
\partial_t\Big[\delta_t(y)  \, g_{\mu_t}(x,y)\Big]
&=
\delta_t(y) \left(\tr\left[\mu_t^{-1}(y)
\partial_t\mu_t(y)\right]\,
g_{\mu_t}(x,y) + (\nabla g)_{\mu_t}(x,y)^{\intercal}\partial_t\mu_t(y)\,
(x-y) \right)
\\
&=
\diver_x \Big(
\delta_t(y)\, g_{\mu_t}(x,y)\, \mu_t^{-1}(y)\, \partial_t\mu_t(y)\, (x-y)
 \Big)\, .
\end{align*}
Combining these two, we get:
\begin{align*}
\partial_t \rho_{g,t}(x)
&=
\int_{\R^d} \underbrace{\partial_t\rho_t(y)}_{= -\diver j_t(y)}
\delta_t(y) \, g_{\mu_t}(x,y)
+ \partial_t\Big[\delta_t(y)  \, g_{\mu_t}(x,y)\Big]\, \rho_t(y)
\, \mathrm dy
\\
&=
\int_{\R^d} \nabla_y\Big[\delta_t(y)  \, g_{\mu_t}(x,y)\Big]^{\intercal}j_t(y)
+ \partial_t\Big[\delta_t(y)  \, g_{\mu_t}(x,y)\Big]\, \rho_t(y)
\, \mathrm dy
\\
&=
-\diver \int_{\R^d} \delta_t(y)\, g_{\mu_t}(x,y)
\left(
j_t(y)
-
\mu_t^{-1}(y)
\Big[N_t(y) + \rho_t(y)\, \partial_t\mu_t(y)\Big](x-y)
\right) \mathrm dy
\\
&=
-\diver j_{g,t}(x)\, .
\end{align*}
The existence of all integrals follows directly from the assumptions and
Corollary \ref{cor:young}, while
$(j_{g,t})_{t\in[0,\infty)} \in
C^1\left([0,\infty)\times\R^d,\R^d\right)$ follows from Proposition $\ref{prop:ConvDer}$.
\end{proof}

\begin{proof}[Proof of Theorem \ref{theorem:AdaptationConditions}]
Again, let $g(x) = \gamma(\|x\|^2)$ for some function $\gamma\colon \R\to\R$. We have:
\begin{align*}
&(f(\Cdot\, - a)\ast^p_{\mu_{f(\Cdot\, - a)}} g) (x)
=
\int f(y-a) \left|\det\left(\mu_{f}(y-a)\right)\right|
g\left(\mu_{f}(y-a)(x-y)\right)\, \mathrm dy
\\
& \hspace{1cm} =
\int f(y) \left|\det\left(\mu_{f}(y)\right)\right|
g\left(\mu_{f}(y)(x-a-y)\right)\, \mathrm dy
=
(f\ast^p_{\mu_f}g) (x-a),
\\[0.2cm]
&(\alpha f\ast^p_{\mu_{\alpha f}} g) (x)
=
\int \alpha f(y) \left|\det\left(\mu_{f}(y)\right)\right|
g\left(\mu_{f}(y)(x-y)\right)\, \mathrm dy
=
\alpha (f\ast^p_{\mu_f}g) (x),
\\[0.2cm]
&(f(A\cdot\Cdot)\ast^p_{\mu_{f(A\cdot\Cdot)}} g) (x)
=
\int f(Ay) \left|\det\left(\mu_{f(A\cdot\Cdot)}(y)\right)\right|
g\left(\mu_{f(A\cdot\Cdot)}(y)(x-y)\right)\, \mathrm dy
\\
& \hspace{1cm} =
\int f(Ay) \left|\det\left(\mu_f(A
y)A\right)\right| \gamma\big((x-y)^{\intercal}A^{\intercal}\mu_f(A
y)^{\intercal}\mu_f(A y)A(x-y)\big)\, \mathrm dy
\\
& \hspace{1cm} =
\int f(y) \left|\det\left(\mu_f(y)\right)\right| \gamma\big((Ax-y)^{\intercal}
\mu_f(y)^{\intercal}\mu_f(y)(Ax-y)\big)\, \mathrm dy
\\
& \hspace{1cm} =
\int f(y) \left|\det\left(\mu_f(y)\right)\right| g\big( \mu_f(y)(Ax-y)\big)\,
\mathrm dy
=
(f\ast^p_{\mu_f}g) (Ax).
\end{align*}
\begin{align*}
&(f^{(t)}\ast_{\mu_{f^{(t)}}}^p g)(x+a_k^{(t)})
=
\int f^{(t)}(y) \, \abs{\det\mu_{f^{(t)}}(y)}^p
g\left(\mu_{f^{(t)}}(y)\big(x+a_k^{(t)}-y\big) \right)\mathrm dy
\\
&\hspace{1cm} = 
\sum_{j=1}^{K} \int f_j\big(y+a_k^{(t)}-a_j^{(t)}\big) \,
\abs{\det\mu_{f^{(t)}}\big(y+a_k^{(t)}\big)}^p
g\left(\mu_{f^{(t)}}\big(y+a_k^{(t)}\big)(x-y) \right)\mathrm dy
\\
& \hspace{0.4cm} \xrightarrow{t\to\infty}
\int f_k(y)\,
\abs{\det\mu_{f_k}(y)}^p
g\left(\mu_{f_k}(y)(x-y) \right)\mathrm dy
= 
(f_k\ast_{\mu_{f_k}}g)(x).
\end{align*}
\end{proof}

\begin{proof}[Proof of Proposition \ref{prop:muFourier}]
The proof is analogous to the one of Proposition
\ref{prop:muWigner2}.
\end{proof}

\begin{proof}[Proof of Proposition \ref{prop:muFBI}]
The proof is analogous to the one of Proposition
\ref{prop:muWigner2}.
\end{proof}

\begin{proof}[Proof of Proposition \ref{prop:AdaptiveMuFBI}]
The proof is analogous to the one of Proposition \ref{prop:muWigner2}.
Alternatively, it follows from Proposition \ref{prop:muFBI}.
\end{proof}

\begin{proof}[Proof of Theorem
\ref{theorem:AdaptationConditionsFulfilledAdaptiveMu}]
Since $(\phi(\Cdot - a)\ast\psi) (x) = (\phi\ast\psi) (x-a)$, the right hand
side of \eqref{equ:adaptiveFBImu} is translation-invariant for any choice of
$\mu_f$, which proves (A1).
If $\mu_f$ solves \eqref{equ:adaptiveFBImu}, then it also solves
\eqref{equ:adaptiveFBImu} for $\tilde f = \alpha f$ ($\alpha >0$) in place of
$f$, proving (A2).
For (A3) let $\mu_f$ be the solution of \eqref{equ:adaptiveFBImu}, $\tilde
f(x) = f(Ax)$ for some $A\in\gldr$ and $\mu_{\tilde f}(x):=
\sqrt{A^{\intercal}\, \mu_f(Ax)^{\intercal}\, \mu_f(Ax)\, A}$.
Then we have for $x,y\in\R^d$
\[
\big(\nabla \tilde f\nabla \tilde f^{\intercal} - \tilde f\, D^2 \tilde
f\big)(x)
=
A^\intercal\, \big(\nabla f\nabla f^{\intercal} - f\, D^2 f\big)(Ax)\, A,
\qquad
G_{\mu_{\tilde f}^{-2}(x)}(y)
=
\abs{\det A}\, G_{\mu_f^{-2}(Ax)}(Ay),
\]
and, since for any functions $\phi,\, \psi$, for which the convolution
$\phi\ast \psi$ exists, we have
\[
\left(\phi(A\cdot\Cdot)\ast \psi(A\cdot\Cdot)\right)(x)
=
\abs{\det A}^{-1}(\phi\ast \psi)(Ax),
\]
$\mu_{\tilde f}$ solves \eqref{equ:adaptiveFBImu} for $\tilde f$ in
place of $f$.
To prove (A4), let $R[f,\mu](x)$ denote the right-hand side of
\eqref{equ:adaptiveFBImu} with an arbitrary $\mu$ in place of $\mu_f$.
Adopting the notation of the Adaptation Axioms \ref{cond:adaptation}, we have
\[
R[f^{(t)}(\Cdot + a_k^{(t)}),\mu](x) \xrightarrow{t\to\infty} R[f_k,\mu](x)
\]
since each $f_k\in W^{2,2}(\R^d,\R)$. Hence, for $\tilde f^{(t)} = f^{(t)}(\Cdot
+ a_k^{(t)})$, the solution of the the implicit formula $\mu^2 = R[\tilde
f^{(t)},\mu]$ is asymptotically given by $\mu = \mu_{\tilde
f^{(t)}} = \mu_{f_k}$.
Therefore,
\[
\mu_{f^{(t)}}(x+a_k^{(t)})
\stackrel{(A1)}{=}
\mu_{\tilde f^{(t)}}(x)
\xrightarrow{t\to\infty}
\mu_{f_k}(x).
\]
\end{proof}

\begin{proof}[Proof of Proposition \ref{prop:muWigner2}]
As the Wigner transform is real-valued, we get for real-valued functions~$f$:
\begin{align*}
W f(x,-\xi)
&=
(2\pi)^{-d}\int_{\R^d}f\left(x+\frac{y}{2}\right)\,
f\left(x-\frac{y}{2}\right)\, e^{-iy^\intercal\xi}\, \mathrm dy
\\[0.1cm]
&=
 (2\pi)^{-d}\overline{\int_{\R^d}f\left(x+\frac{y}{2}\right)\,
f\left(x-\frac{y}{2}\right)\, e^{iy^\intercal\xi}\, \mathrm dy}
=
\overline{W f(x,\xi)}
=
W f(x,\xi),
\end{align*}
and therefore the expectation value of $\P_{\rho_x}$ vanishes.
For the covariance matrix, we use the transformation
\[
z_1 = y_1-y_2,\qquad z_2 = y_1+y_2
\]
and the function
\[
F(z_1,z_2) = f\left(x+\frac{z_2 + z_1}{4}\right)f\left(x-\frac{z_2 +
z_1}{4}\right)f\left(x+\frac{z_2 - z_1}{4}\right)f\left(x-\frac{z_2 -
z_1}{4}\right)
\]
to compute:
\begin{small}
\begin{align*}
\int_{\R^d }|W & f|^2(x,\xi)\, \mathrm d\xi
=
(2\pi)^{-2d} \int_{\R^{3d} } f\left(x+\tfrac{y_1}{2}\right)
f\left(x-\tfrac{y_1}{2}\right)f\left(x+\tfrac{y_2}{2}\right)
f\left(x-\tfrac{y_2}{2}\right) e^{i(y_1-y_2)^\intercal\xi}\,
\mathrm dy_1 \mathrm dy_2 \mathrm d\xi
\\
&=
\frac{(2\pi)^{-2d}}{2^d} \int_{\R^{3d}} F(z_1,z_2)\, 
e^{iz_1^\intercal\xi}\, \mathrm dz_1 \, \mathrm d\xi\, \mathrm dz_2
=
\frac{(2\pi)^{-d}}{2^d} \int F(0,z_2)\, \mathrm dz_2
\\
&=
\frac{(2\pi)^{-d}}{2^d} \int f^2\left(x+\tfrac{z_2}{4}\right)\,
f^2\left(x-\tfrac{z_2}{4}\right)\, \mathrm dz_2
=
2^d\, (2\pi)^{-d} \int f\left(z\right)^2\,
f\left(2x-z\right)^2\, \mathrm dz
\\
&=
2^d\, (2\pi)^{-d}\, \left(f^2\right)\ast\left(f^2\right) (2x),
\\[0.2cm]
\int_{\R^d } |W & f|^2 (x,\xi)\, \xi\xi^{\intercal}\, \mathrm d\xi
=
(2\pi)^{-2d} \int_{\R^{3d} } f\left(x+\tfrac{y_1}{2}\right)
f\left(x-\tfrac{y_1}{2}\right)f\left(x+\tfrac{y_2}{2}\right)
f\left(x-\tfrac{y_2}{2}\right) e^{i(y_1-y_2)^\intercal\xi}\, \xi\xi^{\intercal}\, 
\mathrm dy_1 \mathrm dy_2 \mathrm d\xi
\\
&=
\frac{(2\pi)^{-2d}}{2^d} \int_{\R^{3d}} F(z_1,z_2)\, 
e^{iz_1^\intercal\xi}\, \xi\xi^{\intercal}\, \mathrm dz_1 \, \mathrm d\xi\,
\mathrm dz_2
=
-\frac{(2\pi)^{-d}}{2^d} \int D_{z_1}^2 F(0,z_2)\, \mathrm dz_2
\\
&=
\frac{(2\pi)^{-d}}{2^{d+3}} \int
f^2\left(x+\tfrac{z_2}{4}\right)
\left[
\nabla f\nabla f^{\intercal} - fD^2f
\right]\left(x-\tfrac{z_2}{4}\right)
+
f^2\left(x-\tfrac{z_2}{4}\right)
\left[
\nabla f\nabla f^{\intercal} - fD^2f
\right]\left(x+\tfrac{z_2}{4}\right)
\mathrm dz_2
\\
&=
\frac{(2\pi)^{-d}}{2^{d+3}} \int
f^2(z)
\left[
\nabla f\nabla f^{\intercal} - fD^2f
\right]\left(2x-z\right)
+
f^2\left(2x-z\right)
\left[
\nabla f\nabla f^{\intercal} - fD^2f
\right]\left(z\right)
\mathrm dz_2
\\
&=
2^{d-2}\, (2\pi)^{-d}\, \left(f^2\right)\ast\left[
\nabla f\nabla f^{\intercal} - fD^2f
\right] (2x).
\end{align*}
\end{small}
Taking the quotient proves the formula for the covariance matrix.
\end{proof}

\begin{proof}[Proof of Theorem \ref{theorem:AdaptationConditionsFulfilled}]
Adaptation Axiom \ref{cond:adaptation} (A1) follows from
\begin{align*}
\left(f(\Cdot -a)\ast g(\Cdot -b)\right)(x)
&=
\int f(y-a)\, g(x-y-b)\, \mathrm dy
=
\int f(y)\, g(x-(a+b)-y)\, \mathrm dy
\\
&=
(f\ast g)(x-(a+b)).
\intertext{
(A2) is straightforward and (A3) follows from
}
\left(f(A\cdot\Cdot)\ast g(A\cdot\Cdot)\right)(x)
&=
\int f(Ay)\, g(A(x-y))\, \mathrm dy
=
|\det A|^{-1}\int f(y)\, g(Ax-y)\, \mathrm dy
=
\frac{(f\ast g)(Ax)}{|\det A|}
\intertext{
in the following way:
}
\mu_{f(A\cdot\Cdot)}^{(d)}(x)^{\intercal}\, \mu_{f(A\cdot\Cdot)}^{(d)}(x)
&=
\frac{f^2(A\cdot\Cdot)\ast\left[A^{\intercal}\, \nabla f(A\cdot\Cdot)\, \nabla
f^{\intercal}(A\cdot\Cdot)A - A^{\intercal}\, f\,
D^2f(A\cdot\Cdot)\, A\right]}{f^2(A\cdot\Cdot)\ast f^2(A\cdot\Cdot)}\, (2x)
\\
&=
\frac{A^{\intercal}\left[f^2\ast\left(\nabla f\, \nabla f^{\intercal} -
f\, D^2f\right)\right]A}{f^2\ast f^2}\, (2Ax)
=
A^{\intercal}\, \mu_f^{(d)}(Ax)^{\intercal}\, \mu_f^{(d)}(Ax)\, A\, .
\end{align*}
\end{proof}

\begin{proof}[Proof of Corollary \ref{cor:gaugeWidth2}]
A simple computation shows:
\begin{align*}
Df(x)
&=
-f(x)(x-a)^{\intercal}\Sigma^{-1}
\\
D^2f(x)
&=
f(x)\left[\Sigma^{-1}(x-a)(x-a)^{\intercal}\Sigma^{-1} - \Sigma^{-1}\right]
\\
\left(\nabla f\, \nabla f^{\intercal} - f\, D^2f\right)(x)
&=
f^2(x)\, \Sigma^{-1}.
\end{align*}
The claim follows from the definitions \eqref{equ:adaptiveFBImu} of
$\mu_f^{(d)}$ and \eqref{equ:finalChoiceMu} of
$\mu_f^{(e)}$.
\end{proof}


\begin{proof}[Proof of Proposition \ref{prop:young2}]
For the $h$-adaptive convolution of type two the property $\left\|
G(\Cdot,y)\right\|_p = \left\|g\right\|_p$ is straightforward for all
$y\in\R^d$, $1\le p \le\infty$ and Theorem \ref{theorem:young1} proves the
claim.
For the $h$-adaptive convolution of type three, we denote
\[
\gamma(x,z) := \frac{g_2(z-h(x))}{\left\|g_2(z-h(\Cdot))\right\|_p}
\]
and observe for $y\in\R^d$ and $p = 1,\, \infty$:
\begin{small}
\begin{align*}
\left\|\tilde G(\Cdot,y)\right\|_1
&\le
\left\|g_2\right\|_1 \int \int |g_1(z-h(y))\, \gamma(x,z)|\, \mathrm
dz\, \mathrm dx
=
\left\|g_2\right\|_1 \int |g_1(z-h(y))| \underbrace{\int
|\gamma(x,z)|\, \mathrm dx}_{=1} \mathrm dz
=
\left\|g_1\right\|_1 \left\|g_2\right\|_1,
\\
\left\|\tilde G(\Cdot,y)\right\|_\infty
&=
\left\|g_2\right\|_\infty \, \operatorname*{ess\,sup}_{x\in\R^d} \left| \int
g_1(z-h(y))\, \gamma(x,z) \, \mathrm dz \, \right|
\le
\left\|g_2\right\|_\infty \int |g_1(z-h(y))|\, \mathrm dz
=
\left\|g_1\right\|_1 \left\|g_2\right\|_\infty.
\end{align*}
\end{small}
For $1<p<\infty$, let $p'$ denote the conjugate of $p$ (i.e. $1/p + 1/p' =
1$). H\"older's inequality yields
\begin{align*}
|\tilde G(x,y)|
&\le
\left\|g_2\right\|_p \int |g_1(z-h(y))|^{1/p'}\, |g_1(z-h(y))|^{1/p}
\gamma(x,z)\, \mathrm dz
\\
&\le
\left\|g_2\right\|_p \left\|g_1(\Cdot-h(y))^{1/p'}\right\|_{p'}
\left\|g_1(\Cdot-h(y))^{1/p}\, \gamma(x,\Cdot)\right\|_{p}\, ,
\end{align*}
which implies for each $y\in\R^d$,
\begin{small}
\begin{align*}
\left\|\tilde G(\Cdot,y)\right\|_p^p
&\le
\left\|g_2\right\|_p^p \left\|g_1^{1/p'}\right\|_{p'}^p
\int \left\|g_1(\Cdot-h(y))^{1/p}\, \gamma(x,\Cdot)\right\|_{p}^p\, \mathrm
dx
\le
\left\|g_2\right\|_p^p \left\|g_1\right\|_1^{p/p'}
\int\int |g_1(z-h(y))\, \gamma(x,z)^p|\, \mathrm dz\, \mathrm dx
\\
&=
\left\|g_2\right\|_p^p \left\|g_1\right\|_1^{p/p'}
\int |g_1(z-h(y))| \underbrace{\int |\gamma(x,z)|^p\, \mathrm dx}_{=1} \mathrm
dz
=
\left\|g_2\right\|_p^p\left\|g_1\right\|_1^{p/p'}\left\|g_1\right\|_1
=
\left\|g_1\right\|_1^{p}\left\|g_2\right\|_p^p.
\end{align*}
\end{small}
Therefore $\left\| \tilde G(\Cdot,y)\right\|_p \le \left\|g_1\right\|_1
\left\|g_2\right\|_p$ (for all $y\in\R^d$ and $1\le p\le \infty$) also holds for
type three and again Theorem \ref{theorem:young1} proves the claim.
\end{proof}

\begin{proof}[Proof of Proposition \ref{prop:symmetricG}]
Since in this case $g_1=g_2=g$, the symmetry of $\tilde G$ follows from
\[
\tilde G(x,y)
=
\left\| g \right\|_p \int g(z-h(y))\,
\frac{g(z-h(x))}{\left\|g(z-h(\Cdot))\right\|_p} \, \mathrm dz
=
\left\| g \right\|_p \int \frac{g(z-h(y))}{\left\|g(z-h(\Cdot))\right\|_p}\,
g(z-h(x)) \, \mathrm dz
=
\tilde G(y,x).
\]
Following the proof of Proposition \ref{prop:young2}, we conclude that for each
$1\le p\le\infty$ both $\left\| \tilde G(\Cdot,y)\right\|_p \le
\left\|g\right\|_1\left\|g\right\|_p$ for each $y\in\R^d$ and $\left\| \tilde
G(x,\Cdot)\right\|_p \le \left\|g\right\|_1\left\|g\right\|_p$ for each
$x\in\R^d$. Theorem \ref{theorem:young2} proves the claim.
\end{proof}

\begin{acknowledgement}
\ \\
I thank Caroline Lasser for many insightful and motivating discussions.
%
%
%
\end{acknowledgement}


----------------------------------------------------------------------------------------
\bibliographystyle{abbrv}

\bibliography{myBibliography}

%
%
%
%
%

\end{document}